\newtheorem{thm}{Theorem}
\newtheorem{defn}[thm]{Definition}
\newtheorem{prop}[thm]{Proposition}
\newtheorem{cor}[thm]{Corollary}
\newtheorem{lem}[thm]{Lemma}
\theoremstyle{definition}
\newtheorem{remark}[thm]{Remark}
\def\R{{\mathbb{R}}}
\def\P{{\mathcal{P}}}
\def\Lie{{\text{\rm{Lie}}}}
\def\inj{{\text{\rm{inj}}}}
\def\vol{{\text{\rm{vol}}}}
\def\dist{{\text{\rm{dist}}}}
\def\Ad{{\text{\rm{Ad}}}}
\newcounter{constE}
\renewcommand{\theconstE}{{{C}_{\arabic{constE}}}}
\newcommand{\constE}{\refstepcounter{constE}\theconstE}
\title[An avoidance principle and Margulis functions]{An avoidance principle and Margulis functions for expanding translates of unipotent orbits}
\author{Anthony Sanchez}
\address{Department of Mathematics, University of California San Diego,\\ 9500 Gilman Dr, La Jolla,
CA 92093, USA}
\email{ans032@ucsd.edu}
\author{Juno Seong}
\address{Department of Mathematics, University of California San Diego,\\ 9500 Gilman Dr, La Jolla,
CA 92093, USA}
\email{jseong@ucsd.edu}
\subjclass[2020]{Primary 22F30; Secondary 37D40, 22E46\\
\emph{Key words and phrases: Homogeneous dynamics, Margulis function, Avoidance principle.}}
\begin{document}

\maketitle 

\begin{abstract}
We prove an avoidance principle for expanding translates of unipotent orbits for some semisimple homogeneous spaces. In addition, we prove a quantitative isolation result of closed orbits and give an upper bound on the number of closed orbits of bounded volume. The proofs of our results rely on the construction of a Margulis function and the theory of finite dimensional representations of semisimple Lie groups.
\end{abstract}

\section{Introduction} 

Avoidance principles ---  quantifying how much time trajectories avoid certain subsets of the ambient space --- have been fruitful in the study of dynamical systems. An important example is the non-divergence of unipotent flows which goes back to Margulis \cite{MR0291352}. A quantitative version of non-divergence appears in Dani \cite{MR530631} and were key in Ratner's seminal theorems on unipotent flows \cite{MR1062971,MR1075042,MR1135878,MR1106945}.

Two successful strategies to prove such avoidance principles has been through the construction of \emph{Margulis functions} which originated in the influential work of Eskin--Margulis--Mozes \cite{MR1609447} and the linearization technique of Dani--Margulis \cite{MR1237827}. 

 The flexibility offered by the construction of a Margulis function make them applicable to settings where unipotent dynamics are not available or poorly understood. For example, they appear in the important work of Benoist-Quint \cite{MR2831114,MR3037785,MR3092475} and the recent generalizations of Eskin--Lindenstrauss \cite{EL_short, EL_long} on stationary measures of homogeneous spaces. Additionally, Margulis functions are utilized in Eskin--Mirzakhani--Mohammadi \cite{MR3418528} to prove an avoidance principle that was crucially used to show an analog of Ratner's orbit closure theorem. 
 
 We highlight some other examples to indicate the breadth of Margulis functions, but we recommend the wonderful survey of Eskin--Mozes \cite{EskinMozes} for a more complete overview of the literature. Margulis functions appear: in the setting of Teichm{\"u}ller dynamics by Eskin--Masur \cite{MR1827113} and Athreya \cite{MR2247652}, in the space of lattices by Kadyrov-- Kleinbock--Lindenstrauss--Margulis \cite{MR3736492} and Kleinbock-- Mirzadeh \cite{KM}, for infinite homogeneous spaces by Mohammadi--Oh \cite{Isolations}, and in the space of closed subgroups of a semisimple Lie group equipped with the Chabauty topology in the work of  Gelander--Levit--Margulis \cite{GLM} and Fraczyk--Gelander \cite{FG}.

We use Margulis functions and the theory of finite dimensional representations of semisimple Lie groups to prove an avoidance principle. Broadly speaking, our results rely on the hyperbolicity of diagonal actions and the fact that the perturbation by a foliation often places one in a general position where one expects expansion by the diagonal direction.  

Throughout this paper, $G$ will be a semisimple algebraic Lie group without compact factors and $H$ will be a semisimple subgroup of $G$ without compact factors such that $C_G(H)$ is finite. We let $X:=G/\Gamma$ where $\Gamma$ is a lattice.

We equip $\Lie(G)$ with an inner product that induces a Riemannian metric on $G$. The notion of distances and volumes makes sense with respect to this Riemanninan metric. Denote by by $\inj(x)$ the injectivity radius at point $x$. See the next section for formal descriptions of these notions.

\begin{defn} 
For a pair of positive real numbers $(V, d)$, we say that a point $x\in X$ is \emph{$(V, d)$-Diophantine with respect to $H$} if the following holds: for any intermediate subgroup $H\subseteq S\subsetneq G$ and any closed $S$-orbit $Y=Sx'$ with $\vol (Y) \le V$, the distance between $x$ and $Y$ is at least $d$; namely, $\dist(x,Y) \ge d$. 

For $r>0$, if a point $x\in X$ is $(V, d)$-Diophantine with respect to $H$ and $\inj(x) \ge r$, then we say that $x$ is \emph{$(V, d, r)$-Diophantine with respect to $H$}.
\end{defn}

We fix a one parameter subgroup of diagonalizable elements $\{ a_{t} \} \subseteq H$ and let $U$ be the unstable horospherical subgroup with respect to $\{ a_{t} \}$;
$$U = \{ u\in G : a_{t}ua_{-t} \rightarrow e  \ as \ t \  \rightarrow -\infty \}.$$

We work with the operators
$$(A_{r,t}f)(x) = \frac{1}{m_U(B_r^U)}\int_{B_r ^U}f(a_{t}u x)\,dm_U(u),$$
where $B_{r}^{U} $ is the ball of radius $r$ in $U$ and $m_U$ is the Haar measure on the Lie subgroup $U$ normalized so that $B_1 ^U$ has measure 1. Here the implicit metric on $U$ comes from the identification of $\Lie(U)$ with a Euclidean space. See the next section for details. When considering $A_{1,t}$, we use the notation $A_t$.

We use the operators $A_{r,t}$ to prove a result on the behavior of points of the form $a_tux$ for $u\in B_1 ^U$ and large $t>0$. The following is our main theorem. 

\begin{thm}[Avoidance Principle]\label{MainThm}
Let $G$ be a semisimple group without compact factors and $H$ be a semisimple subgroup without compact factors such that $C_G(H)$ is finite. Let $X=G/\Gamma$ where $\Gamma$ is a lattice. There exists absolute constants $D=D(\dim(G))>0$, $A=A(G/\Gamma, H)>0$, and $C=C(G/\Gamma, H)>0$ such that the following dichotomy holds: for any $x\in X$, there exists $T_x >0$ such that for any pair of $T>T_{x}$ and $R>2$, either:
\begin{itemize}
    \item[(1)] $x$ is not $(R,1/T)$-Diophantine or
    \item[(2)] for all $t \ge A\log T$, \begin{align*}
    & m_{U}\left(\left\{u\in B_1 ^U: a_{t}ux \text{\rm{ is not }\it} 
    (R, R^{-D}, R^{-D}) \text{\rm{-Diophantine with respect to }\it} H\right\}\right) \\
    & < CR^{-1}.
    \end{align*}
\end{itemize}
Moreover, if $K \subseteq X$ is compact, then, $T_x$ can be chosen to be uniform over all $x \in K$.
\end{thm}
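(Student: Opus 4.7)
The plan is to construct a Margulis-type height function $f_R : X \to [2,\infty]$ whose value detects both the proximity of $x$ to closed $S$-orbits (for intermediate $H \subseteq S \subsetneq G$) of volume at most $R$ and the smallness of $\inj(x)$, and then to establish a contraction inequality of the form
$$(A_{r,t} f_R)(x) \;\leq\; c_1 e^{-\lambda t} f_R(x) + c_2$$
for the averaging operator. Combined with the $(R,1/T)$-Diophantine hypothesis in alternative $(1)$, which forces $f_R(x)$ to be initially bounded by a polynomial in $T$, and Chebyshev's inequality, this will yield the measure bound in alternative $(2)$.

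The first step is to linearize the geometric condition through representation theory. Because $H$ is semisimple and $C_G(H)$ is finite, standard rigidity of intermediate semisimple subgroups implies only finitely many $G$-conjugacy classes of proper intermediate subgroups $H \subseteq S \subsetneq G$. For each such class, Chevalley's theorem provides a rational representation $\rho_S : G \to GL(W_S)$ and a vector $w_S \in W_S$ whose stabilizer is commensurable with $S$. Closed $S$-orbits then correspond, up to finite ambiguity, to $\Gamma$-orbits of vectors in $\rho_S(G)w_S$; the volume of an orbit is comparable to $\|\rho_S(g)\gamma w_S\|$ for a representing $\gamma$, and the distance from $x = g\Gamma$ to the orbit is comparable to the norm of a transverse component. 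I would then set
$$f_R(x) = \sum_{S} \sum_{v \in \Gamma w_S,\; \|\rho_S(g)v\| \leq R} \|\rho_S(g)v\|^{-\alpha} + (\text{cusp term}),$$
for a small $\alpha > 0$, where the cusp term is a standard Eskin--Margulis--Mozes function on $X$ that blows up as $\inj \to 0$.

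The central technical step is the contraction inequality. For a fixed vector $v$, bounding $\int_{B_r^U} \|\rho_S(a_t u g)v\|^{-\alpha}\,dm_U(u)$ reduces to a one-variable polynomial-decay estimate on the $\{a_t\}$-weight spaces of $W_S$. The condition that $\{a_t\} \subseteq H$ and $C_G(H)$ is finite ensures $a_t$ has no trivial weight on any nontrivial $H$-irreducible constituent, so after averaging over $B_r^U$ there is genuine expansion in some positive-weight direction. The one-variable argument from \cite{MR1609447} then yields $(A_{r,t}\|\rho_S(\cdot)v\|^{-\alpha})(x) \leq c_1 e^{-\lambda \alpha t}\|\rho_S(g)v\|^{-\alpha} + c_2$, and summing over the finitely many $S$ and over lattice vectors --- with $\alpha$ chosen small enough for convergence --- gives the required inequality for $f_R$. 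The hard part will be making $c_1, c_2, \lambda$ uniform in $S, v, x$; this is precisely where finiteness of the classes of intermediate subgroups and uniform control of weights across the finitely many $\rho_S$ must be carefully exploited.

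With the inequality in hand, the proof concludes quickly. If $x$ is $(R,1/T)$-Diophantine then every relevant vector satisfies $\|\rho_S(g)v\|\geq c/T$, so $f_R(x) = O(T^{c\alpha})$. Iterating the contraction inequality and choosing $A$ large enough that $t \geq A\log T$ makes $e^{-\lambda \alpha t}f_R(x) = O(1)$ yields $(A_{r,t}f_R)(x) = O(1)$. Chebyshev then bounds $m_U(\{u : f_R(a_t u x) > R^{D\alpha}\}) \leq CR^{-D\alpha}$; since $f_R(y) > R^{D\alpha}$ whenever $y$ fails to be $(R, R^{-D}, R^{-D})$-Diophantine (by the definition of $f_R$ and normalization of the cusp term), setting $D = 1/\alpha$ --- which depends only on $\dim G$ through the dimensions and weights of the chosen representations --- produces the desired $CR^{-1}$ bound. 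Uniformity of $T_x$ over compact $K$ follows from $f_R$ being continuous and uniformly bounded on $K$ for fixed $R$.
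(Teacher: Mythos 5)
Your high-level architecture --- construct a Margulis function detecting both injectivity radius and proximity to closed $S$-orbits, prove a contraction inequality for the averaging operator, use the Diophantine hypothesis to bound the initial value polynomially in $T$, and conclude by Chebyshev --- matches the paper's strategy exactly. But there are two substantive divergences, and the second is a gap that would need to be filled.

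First, your linearization step differs from the paper's. You propose Dani--Margulis-style linearization via Chevalley: find $\rho_S$ and $w_S$ whose stabilizer is commensurable with $S$, then identify nearby sheets with lattice vectors $v\in\Gamma w_S$. The paper instead works directly with the adjoint representation on $\Lie(G)=\Lie(S)\oplus V_S$ and defines a \emph{window set} $I_Y(x)$ of transverse displacements $v\in V_S$ for which $\exp(v)x\in Y$, with the window radius shrinking like $\varepsilon_h\,h(x)^{-\kappa}$ as $x$ goes into the cusp. This choice matters because the paper explicitly avoids arithmeticity of $\Gamma$; the Chevalley route is most natural when $S$ is $\Gamma$-rational, and establishing that every closed $S$-orbit corresponds to a discrete $\Gamma$-orbit of $w_S$ in a quantitatively usable way involves the same delicacy that the paper's approach sidesteps. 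Your single aggregate function $f_R$ summing over all $S$ and all vectors is also a different bookkeeping than the paper, which builds a separate $F_Y$ for each closed orbit $Y$ and then takes a union bound using the Quantitative Isolation theorem and the polynomial bound on the number of closed orbits of volume $\le R$.

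Second, and this is the real gap: your sketch never establishes a uniform bound on the number of summands in $f_R$. The per-vector contraction inequality has an additive constant $c_2$; summing it over the lattice vectors appearing in $f_R$ produces an additive error proportional to the \emph{number} of such vectors, and that number grows without bound as $x\to$ cusp (for fixed window size) --- so without a uniform count, the inequality $(A_{r,t}f_R)(x)\le c_1 e^{-\lambda t}f_R(x)+c_2$ has no chance of holding with an absolute $c_2$. In the paper this is precisely the content of Proposition \ref{Prop:Number_of_Nearby_Sheets} ($\#I_Y(x)<C_1\vol(Y)$ uniformly in $x$), and proving it is nontrivial: it requires the Return Lemma (Lemma \ref{lem:Return_lemma}), which in turn uses the full Margulis inequality for the Eskin--Margulis height function $h$, together with the comparison between $h(x)$ and $\inj(x)^{-1}$ (Proposition \ref{Proposition:comparing_height_and_inj_radius}) and a cusp-adapted choice of window radius $\varepsilon_h h(x)^{-\kappa}$. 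Your proposal mentions a cusp term "$+$ (cusp term)" and flags uniformity as "the hard part," but this is not a detail to be deferred: it is the place where the return lemma and the interplay between the height function and the window size are indispensable, and without them the additive constant is not controlled. Relatedly, when you pass to $a_t u$, vectors enter and exit your restriction $\|\rho_S(g)v\|\le R$ --- a fixed norm cutoff is the wrong shape precisely because the correct cutoff must scale with the cusp excursion, which is what the paper's $h(x)^{-\kappa}$-radius window accomplishes.
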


\begin{remark} We note a generalization of our main result for solvable epimorphic subgroups. Recall, a subgroup $G'$ of a real algebraic group $G$ is called epimorphic in $G$ if any $G'$-fixed vector is also $G$-fixed for any finite dimensional algebraic representation of $G$. Proposition 2.2. of Shah and Weiss \cite{MR1756987} gives an analogous result to our Linear Algebra Lemma (Lemma \ref{Lem:General_LA_lemma}) for solvable epimorphic groups. Hence, it is plausible that our result can be further generalized so that $B_1^U$ in condition (2) is replaced by $B_1^N$ where $N \subseteq U$ is an algebraic unipotent subgroup normalized by $\{a_t\}$, such that the subgroup generated by $\{a_t\}$ and $N$ is solvable and epimorphic in $G$. 
\end{remark}

The result is similar to Lindenstrauss--Margulis--Mohammadi-Shah \cite{LMMS} though they are interested in avoidance principles for unipotent flows and work in a more general setting. It is also similar to the work of B\'{e}nard--de Saxc\'{e}\cite{BS}.

To prove our main result, we need results on the quantitative isolation of closed orbits which are interesting in their own right. The following theorem is analogous to Lemma 10.3.1 of Einsiedler--Margulis--Venkatesh \cite{MR2507639}.

\begin{thm}[Quantitative Isolation of closed orbits]\label{Thm:Isolation_in_distance} There exists a global constant $D = D(\dim(G)) >0$ such that the following holds: for all intermediate subgroup $H \subseteq S \subsetneq G$ and closed $S$-orbits $Y=Sy$ and $Z=Sz$ of finite volume,
$$\dist(Y \cap K, Z) \gg_K \vol(Y)^{-D}\vol(Z)^{-D}$$
where $K$ is a compact subset of $X$.
\end{thm}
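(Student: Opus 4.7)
The plan is to follow the representation-theoretic approach behind Einsiedler--Margulis--Venkatesh's Lemma 10.3.1. Since $C_G(H)$ is finite and $H\subseteq S$, the intermediate $S$ also has $C_G(S)$ finite, $S$ is semisimple, and $N_G(S)/S$ is finite. Attach to $S$ a finite-dimensional algebraic representation $\rho\colon G\to \mathrm{GL}(V)$ together with a nonzero vector $v_S\in V$ whose $G$-stabilizer is (up to finite index) $N_G(S)$; the canonical choice is $V=\bigwedge^{\dim S}\Lie(G)$ and $v_S=\bigwedge \Lie(S)$. By Margulis arithmeticity of $\Gamma$ and a mild adjustment of $\rho$, there is a $\rho(\Gamma)$-stable $\Z$-lattice $V_\Z\subset V$, and $\rho(\Gamma)v_S$ is discrete in $V$. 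For a closed finite-volume $S$-orbit $Y=Sy$ with lift $y=g_Y\Gamma$, a Minkowski/reduction-theory estimate applied to the lattice $g_Y^{-1}Sg_Y\cap \Gamma$ inside $g_Y^{-1}Sg_Y$ produces a preferred representative $v_Y\in \rho(g_Y)\rho(\Gamma)v_S$ of norm $\|v_Y\|\ll \vol(Y)^{\kappa}$ for some absolute $\kappa=\kappa(\dim G)$; and, using that $N_G(S)/S$ is finite, distinct closed $S$-orbits correspond to distinct sets $\rho(g_{Y'})\rho(\Gamma)v_S$.

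\textbf{Upper bound via linearization.} Take $y'\in Y\cap K$ and $z'\in Z$ with $\e:=\dist(y',z')\le 1$. Lift them to $g',h'\in G$ lying in a compact set $K'\subset G$ depending only on $K$, with $g'=h'\exp(\xi)$, $\|\xi\|=\e$. Then $\rho(g')v_S$ and $\rho(h')v_S$ are in a bounded region of $V$, and the mean-value estimate on $\rho$ gives
\[
\|\rho(g')v_S-\rho(h')v_S\|=\|\rho(h')(\rho(\exp\xi)-I)v_S\|\ll_K \e.
\]
Write $\rho(g')v_S=\rho(\gamma_Y)^{-1}v_Y$ and $\rho(h')v_S=\rho(\gamma_Z)^{-1}v_Z$ for appropriate $\gamma_Y,\gamma_Z\in\Gamma$; boundedness of $\rho(g')v_S$ together with $\|v_Y\|\ll \vol(Y)^{\kappa}$ forces $\|\rho(\gamma_Y)\|\ll_K \vol(Y)^{\kappa}$. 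Applying $\rho(\gamma_Y)$ to both sides yields
\[
\|v_Y-\rho(\gamma_Y\gamma_Z^{-1})v_Z\|\ll_K \vol(Y)^{\kappa}\,\e.
\]

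\textbf{Lower bound via discreteness and conclusion.} The distinctness $Y\ne Z$ makes the vectors on the left nonzero and inequivalent in the discrete union $\bigcup_{Y'}\rho(g_{Y'})\rho(\Gamma)v_S$ taken over closed $S$-orbits $Y'$ with $\vol(Y')\le V:=\max(\vol(Y),\vol(Z))$. The number of such $Y'$ is $\ll V^{c_0}$ (by the quantitative count of closed orbits of bounded volume, analogous to Theorem 12.2 of Einsiedler--Margulis--Venkatesh), each contributing polynomially many vectors of norm at most $R\ll \vol(Y)^{\kappa}$, and a pigeonhole estimate then forces the minimum separation between distinct elements to be $\gg V^{-\kappa_1}$ for some absolute $\kappa_1=\kappa_1(\dim G)$. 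Combining with the upper bound,
\[
\vol(Y)^{\kappa}\,\e\gg_K V^{-\kappa_1}\ge \vol(Y)^{-\kappa_1}\vol(Z)^{-\kappa_1},
\]
which gives $\e\gg_K \vol(Y)^{-D}\vol(Z)^{-D}$ for $D:=\kappa+\kappa_1$, proving the theorem.

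\textbf{Main obstacle.} The core technical work is (i) obtaining the polynomial norm--volume bound $\|v_Y\|\ll \vol(Y)^{\kappa}$ with an exponent $\kappa$ independent of the intermediate subgroup $S$, which requires uniform reduction theory inside $S$; and (ii) establishing the pigeonhole-type separation for distinct $\rho(\Gamma)$-orbits in $V$ with constants depending only on $\dim G$. Both steps rely on the fact that reductive intermediate subgroups $S\supseteq H$ of $G$ have complexity bounded by $\dim G$ and, up to conjugation by a compact set, fall into finitely many types, so all implicit exponents can be absorbed into a single $D=D(\dim G)$.
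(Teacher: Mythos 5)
Your proposal follows the representation-theoretic route of Einsiedler--Margulis--Venkatesh Lemma 10.3.1, which is fundamentally different from the paper's argument. The paper explicitly makes this distinction: its proof is driven by the Margulis function $F_Z$ constructed in Section~7. Concretely, the paper integrates the Margulis inequality $A_t F_Z < c F_Z + E_2\vol(Z)$ against the $A$-ergodic $S$-invariant probability measure $m_Y$ to deduce $m_Y(F_Z)\le \tfrac{E_2}{1-c}\vol(Z)$, then uses log-continuity of $F_Z$ and a local averaging over an $\epsilon$-ball inside $Y$ to pass from the integral bound to a pointwise bound $f_Z(y)\ll_K\vol(Y)\vol(Z)$ for $y\in Y\cap K$, and finally reads off $\dist(y,Z)^{-\delta_F}\ll f_Z(y)$. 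This yields the estimate with $D=1/\delta_F$ and, as the paper emphasizes, requires neither arithmeticity of $\Gamma$ nor spectral gap.

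Beyond being a different route, your argument has a genuine gap at the separation step. The clause ``a pigeonhole estimate then forces the minimum separation between distinct elements to be $\gg V^{-\kappa_1}$'' is not a valid deduction: having $\ll V^{c_0}$ points, each of norm $\ll \vol(Y)^\kappa$, places no lower bound whatsoever on pairwise distances. To get quantitative separation one needs either a $\Z$-structure on $\rho(\Gamma)v_S$ (in which case distinct points differ by $\gg 1$ and the $\vol(Z)^{-\kappa_1}$ factor is superfluous, so your final estimate doesn't match your own intermediate claim), or the spectral-gap/equidistribution input used in Einsiedler--Margulis--Venkatesh, which you do not supply. Relatedly, your use of Margulis arithmeticity is a real restriction: in real rank $1$ nonarithmetic lattices exist, and the theorem as stated covers those. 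So even granting the Minkowski-type bound $\|v_Y\|\ll\vol(Y)^\kappa$ (which itself is nontrivial and would need uniform reduction theory inside $S$ as you note), the conclusion as written does not follow, and the approach is not one that can be made to work at the level of generality of the theorem without substantial extra input. You may still want to read the paper's proof: it sidesteps all of these difficulties by replacing the global reduction/linearization step with an averaged Margulis-function bound.
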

We note that the proof of \cite[Lemma 10.3.1]{MR2507639}, relies on uniform spectral gap for periodic $S$-orbits ($H \subseteq S \subsetneq G$) in congruence quotients. Our proof is arguably softer. In particular, it does not require $\Gamma$ to be arithmetic. The main idea of the proof is to estimate the size of the additive constant of a Margulis function, and goes back to Margulis' unpublished notes (see also Theorem 1.1. of \cite{Isolations}).

Using Theorem \ref{Thm:Isolation_in_distance} above, an upper bound can be obtained on the number of closed orbits of bounded volume. The theorem below is analogous to Corollary 10.7 of Mohammadi--Oh \cite{Isolations}.

\begin{thm}[Upper bound for the number of closed orbits of bounded volume]\label{finiteness_closed_orbits_bounded_volume} There exists a global constant $D = D(\dim(G)) \gg 1$ such that for any 
intermediate subgroup $H\subseteq S\subsetneq G$,
$$\#\{Y: Y=Sy\text{\rm{ is a closed }\it}S\text{\rm{-orbit and }\it}\vol(Y)\le R\} \ll R^{D}.$$
\end{thm}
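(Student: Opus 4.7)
The plan is to combine Theorem \ref{Thm:Isolation_in_distance} with a volume-packing argument in $X$. The quantitative isolation says that representative points from distinct closed $S$-orbits of bounded volume are polynomially separated; a standard volume count in a compact set of polynomial size then yields a polynomial upper bound on the number of orbits.

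First I would produce a compact set $K_R \subseteq X$ whose volume grows polynomially, say $\vol(K_R) \ll R^{D_1}$, and which every closed $S$-orbit $Y$ with $\vol(Y) \le R$ must meet. The natural candidates are sublevel sets $K_R = \{\phi \le R^{D_1'}\}$ of the Margulis function constructed for the proof of Theorem \ref{Thm:Isolation_in_distance}. By integrating the defining Margulis-type inequality along a closed $S$-orbit $Y$, one obtains an \emph{a priori} bound on $\int_Y \phi\, dm_Y$ in terms of $\vol(Y)$; a Chebyshev-style argument then shows $Y \cap K_R \neq \emptyset$ whenever $\vol(Y) \le R$ for an appropriate choice of the exponent $D_1'$. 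Choose one such point $y_i \in Y_i \cap K_R$ for each closed $S$-orbit $Y_i$ of volume at most $R$.

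Next apply Theorem \ref{Thm:Isolation_in_distance} with compact set $K_R$: for $i \neq j$ one has $\dist(y_i, Y_j) \gg_{K_R} \vol(Y_i)^{-D} \vol(Y_j)^{-D} \ge R^{-2D}$, and in particular $\dist(y_i, y_j) \gg R^{-D_2}$ once the implicit dependence of the constant on $K_R$ is tracked. Thus $\{y_i\}$ is a $\delta$-separated subset of $K_R$ with $\delta \gg R^{-D_2}$, so a covering/packing estimate in the Riemannian manifold $X$ bounds its cardinality by a constant times $\vol(K_R)\,\delta^{-\dim G} \ll R^{D_1 + D_2 \dim G}$, which is the desired polynomial bound with $D = D_1 + D_2\dim G$ depending only on $\dim G$.

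The main obstacle is the careful bookkeeping at two steps: showing that the compact set $K_R$ can indeed be taken of polynomial volume (this requires knowing that the Margulis function $\phi$ has polynomial growth of volumes of sublevel sets, a standard output of the construction but one that must be verified), and tracking the dependence of the constant in Theorem \ref{Thm:Isolation_in_distance} on the compact set $K_R$ so that it decays only polynomially, not exponentially, in $R$. Both are quantitative refinements that should be readable off the Margulis-function argument underlying Theorem \ref{Thm:Isolation_in_distance}, but they are what must be made explicit in order to collapse all the exponents into a single $D = D(\dim G)$.
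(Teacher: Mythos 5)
Your approach is broadly correct and does reach a polynomial bound, but it follows a genuinely different route from the paper's and is more laborious than it needs to be.

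The paper does not build an $R$-dependent compact set at all. It invokes Dani--Margulis quantitative non-divergence (already used for Lemma \ref{Lem:Lower_bound_on_volume}) to produce a \emph{fixed} $\rho$ with $m_Y(X\setminus X_\rho)<0.01$ for \emph{every} closed $S$-orbit $Y$, of any finite volume. It then covers the fixed set $X_\rho$ by $O(1)$ boxes $\mathrm{Box}(\eta)z_j=\exp(B_\eta^{\Lie(S)})\exp(B_\eta^{V_S})z_j$, observes that each connected component of $Y\cap\mathrm{Box}(\eta)z_j$ is a translate $C_v=\exp(B_\eta^{\Lie(S)})\exp(v)z_j$ of fixed volume $\eta^{d_S}$, and then uses Theorem \ref{Thm:Isolation_in_distance} to force the parameters $v\in V_S$ of components coming from \emph{distinct} orbits in the dyadic shell $\mathcal{Y}(2^k)$ to be $2^{-2k/\delta_F}$-separated. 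Counting a separated set inside $B_\eta^{V_S}$ bounds the number of components per box, hence the total volume landing in $X_\rho$, and dividing by the minimum volume $2^{k-1}$ in the shell bounds $\#\mathcal Y(2^k)$; summing dyadically gives the result with $D=d_G/\delta_F$.

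Your plan — pick one representative $y_i\in Y_i\cap K_R$ per orbit, separate them by isolation, pack in $K_R$ — also works, and you correctly identify the two obstructions: polynomial volume growth of $K_R$ and polynomial dependence of the implied constant in Theorem \ref{Thm:Isolation_in_distance} on $K_R$. But both obstructions are self-inflicted. If you integrate the orbit-independent height Margulis function $h$ (rather than the orbit-specific $F_Z$) against $m_Y$ using Theorem \ref{Thm:Margulis_height}, you get $m_Y(h)\le B_t/(1-c)$, a constant that does \emph{not} depend on $\vol(Y)$; a Chebyshev argument then shows every closed orbit puts a definite fraction of its mass in a fixed sublevel set $\{h\le C\}$, so $K_R$ need not grow with $R$ at all. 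With a fixed $K$, the isolation constant is just a fixed constant and the packing argument closes immediately. As written, your sublevel set $\{\phi\le R^{D_1'}\}$ drags an unnecessary polynomial degradation through the injectivity radius (via Proposition \ref{Proposition:comparing_height_and_inj_radius}) and the log-continuity constant $\sigma_{F_Z}(K)$ in the proof of Theorem \ref{Thm:Isolation_in_distance}; this is harmless for polynomiality but inflates the exponent compared with the paper's $d_G/\delta_F$, which benefits from separating only in the transverse direction $V_S$ (so exponent $d_G-d_S$, not $d_G$) inside a compact set of $O(1)$ volume.
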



\subsection{Acknowledgements}
The authors would like to generously thank Amir Mohammadi for suggesting this line of work and for patiently answering many technical questions. We are also grateful to Asaf Katz for helpful discussions regarding his work \cite{katz/online}. 
A.S. was supported by the National Science Foundation Postdoctoral Fellowship under grant number DMS-2103136.

\section{Preliminaries}
In this section we fix notation.

Equip $\Lie(G)$ with the Killing form. This induces
\begin{enumerate}
    \item a norm $\|\cdot\|$ on $\Lie(G)$.
    \item a right-invariant Riemannian metric on $G$ that induces a right-invariant metric on $G$ denoted as $\dist_G$. 
    \item a  metric on $X=G/\Gamma$ denoted as $\dist$ so that the canonical projection $G\to X$ is a local isometry. 
    \item a volume for a closed orbit $H$-orbit on $X$ induced from the Riemannian structure on $G$ which we denote with $\vol$.
\end{enumerate}

With respect to the norm $\|\cdot\|$ on $\Lie(G)$, we can define the unit ball in $\Lie(G)$ which we denote as $B_1 ^{\Lie(G)}$. 

We choose a inner product on $\Lie(U)$ that comes from the identification of $\Lie(U)$ to $\R ^ {d_U}$ where $d_U$ denotes the dimension of $\Lie(U)$. For any $\eta>0$, we can use the inner product on $Lie(U)$ to define a norm (resp. metric) on $\Lie(U)$ (resp. $U$). This allows us to make sense of the unit ball in $\Lie(U)$ which we denote as $B_1 ^{\Lie(U)}$ (resp. in $U$ which we denote as $B_1 ^U$).

For each $x \in X$, we denote by $\inj(x)$ the injectivity radius at point $x$; the supremum of all $\eta>0$ for which the projection map $g \rightarrow gx$ from $G$ to $X=G/\Gamma$ is injective on $B_{\eta}^G$. In Section \ref{Return lemma and Number of Nearby Sheets}, we shall choose a specific $\varepsilon_X>0$ and denote $X_{\varepsilon_X} := \{ x \in X : \inj(x) \ge \varepsilon_X \}$  as the compact part of $X$. Since the exponential map $\Lie(G) \rightarrow G $ defines a local diffeomorphism, there exists an absolute constant $ \sigma_{0} >1 $ such that for all $ w \in \Lie(G)$ with $||w|| \le \epsilon_{X}$ and $x \in X_{\varepsilon_X}$,
    $$ \sigma_{0}^{-1} ||w|| \le \dist(x, \exp(w)x) \le \sigma_{0}||w||.$$ 
By noting that the canonical projection $G\to X$ is a local isometry, we have a way of locally measuring distances in $X$ with the norm on $\Lie(G)$.

For any intermediate subgroup $H \subseteq S
\subseteq G$, we denote the dimension of $\Lie(S)$ by $\dim(S)$ or simply, $d_S$. 

We will denote the Haar measure on $G$ by $m_G$. For the horospherical subgroup $U$ of $G$, we denote the Haar measure on $U$ by $m_U$. 

Let $T$ denote a maximal Cartan subgroup containing $(a_t)_{t\in\mathbb R}$. Let $\rho:G\to GL(V)$ be a finite dimensional representation. Let $\Phi$ denote the root system of $\Lie(G)$ and decompose the vector space into weight spaces $V=\oplus_{\beta\in \Phi} V_\beta$ where $$V_\beta =\{v\in V:\rho(\tau)v=\exp(\beta(\log(\tau)))v,\forall \tau \in T\}$$ is the weight space with weight $\beta\in \Phi$. Choose a basis $(v_{\beta,i})_{i=1} ^{\text{dim }V_\beta}$ so that every $v\in V$ can be written in the form
$$v=\sum_{\beta \in \Phi}\sum_{i=1} ^{\text{dim }V_\beta}c_{\beta,i}v_{\beta,i}$$
for some scalars $c_{\beta,i}$.

Let $S$ be an intermediate subgroup with $H\subseteq S \subsetneq G$ and consider the decomposition of $\Lie(G)$ given by $\Lie(G)=\Lie(S)\oplus V_{S}$ where $V_S$ is $\Ad(\Lie(S))$-invariant, but not necessarily irreducible. If we decompose $V_S$ into $\Ad(\Lie(S))$-invariant subspaces, then each subspace will be non-trivial since $C_G(H)$ is finite. This will be an important fact that we use throughout the paper when working with the adjoint representation.

We end the section by introducing two results from Einsiedler--Margulis--Venkatesh \cite{MR2507639} on intermediate subgroups $H \subset G$. 
\begin{lem}[Lemma 3.4.1, \cite{MR2507639}]\label{finitely_many_intermediate_subgroups}
Suppose $H\subseteq G$ are semisimple Lie groups without compact factors such that $C_G(H)$ is finite. Then there are only finitely many intermediate subgroups $H\subseteq S \subsetneq G$. Each such $S$ is semisimple and without compact factors.
\end{lem}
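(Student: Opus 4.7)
The plan is to pass to the Lie algebra level and exploit the complete reducibility of $\operatorname{ad}(\mathfrak{h})$-modules together with the structural consequence of finite centralizer. Write $\mathfrak{g}=\Lie(G)$, $\mathfrak{h}=\Lie(H)$, and let $\mathfrak{s}=\Lie(S)$ for any intermediate subgroup $H\subseteq S\subsetneq G$. The hypothesis $|C_G(H)|<\infty$ is equivalent to $\mathfrak{g}^{\mathfrak{h}}:=\{X\in\mathfrak{g}:[\mathfrak{h},X]=0\}=0$, and since $\mathfrak{s}$ is $\operatorname{ad}(\mathfrak{h})$-invariant, the same vanishing applies to $\mathfrak{s}^{\mathfrak{h}}$ and to every $\mathfrak{h}$-invariant subalgebra of $\mathfrak{s}$.

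First I would show $\mathfrak{s}$ is semisimple by arguing that its solvable radical $\mathfrak{r}(\mathfrak{s})$ vanishes. The radical is a characteristic ideal, hence $\operatorname{ad}(\mathfrak{h})$-stable, and so is the nilradical $\mathfrak{n}(\mathfrak{s})\supseteq[\mathfrak{s},\mathfrak{r}(\mathfrak{s})]$. If $\mathfrak{n}(\mathfrak{s})\neq 0$, then $\exp(\mathfrak{n}(\mathfrak{s}))$ is a nontrivial connected unipotent subgroup of $G$ normalized by $H$, so $H$ sits inside its normalizer, which is a proper parabolic $P=LU$ of $G$. After conjugating $H$ into the Levi $L$, the central torus $Z(L)^{\circ}$ is nontrivial and commutes with $H$, violating the finiteness of $C_G(H)$. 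If instead $\mathfrak{n}(\mathfrak{s})=0$, then $[\mathfrak{s},\mathfrak{r}(\mathfrak{s})]=0$, so $\mathfrak{r}(\mathfrak{s})\subseteq Z(\mathfrak{s})\subseteq\mathfrak{g}^{\mathfrak{h}}=0$. Either way $\mathfrak{r}(\mathfrak{s})=0$.

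To rule out compact factors, suppose $\mathfrak{k}\neq 0$ is a compact simple ideal of $\mathfrak{s}$ and write $\mathfrak{s}=\mathfrak{k}\oplus\mathfrak{s}'$. Projecting $\mathfrak{h}$ onto $\mathfrak{k}$ gives a semisimple quotient of $\mathfrak{h}$ sitting inside a compact Lie algebra, which must vanish since $\mathfrak{h}$ has no compact factors. Hence $\mathfrak{h}\subseteq\mathfrak{s}'$, which forces $\mathfrak{k}$ to commute with $\mathfrak{h}$ and so $\mathfrak{k}\subseteq\mathfrak{g}^{\mathfrak{h}}=0$, a contradiction.

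For finiteness, I would invoke the classical fact that a semisimple Lie algebra has only finitely many $G$-conjugacy classes of semisimple subalgebras (finitely many isomorphism types appear in dimension at most $\dim\mathfrak{g}$, and each contributes finitely many classes). It then suffices to bound the number of intermediates within a single class $[\mathfrak{s}_0]$, which we may assume contains at least one intermediate subalgebra and take $\mathfrak{s}_0$ to be one. Any other member of $[\mathfrak{s}_0]$ containing $\mathfrak{h}$ has the form $g\mathfrak{s}_0 g^{-1}$ with $\operatorname{Ad}(g^{-1})\mathfrak{h}\subseteq\mathfrak{s}_0$, so the count is governed by the set of embeddings of $\mathfrak{h}$ into $\mathfrak{s}_0$ lying in the $G$-orbit of $\mathfrak{h}$. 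These split into finitely many $\operatorname{Ad}(S_0)$-orbits by the same classical finiteness applied inside $\mathfrak{s}_0$, and a short double coset computation shows each orbit contributes $|N_G(\mathfrak{h})/(N_G(\mathfrak{h})\cap N_G(\mathfrak{s}_0))|$ distinct intermediates. This quotient is finite because $\mathfrak{n}_{\mathfrak{g}}(\mathfrak{h})=\mathfrak{h}\oplus\mathfrak{g}^{\mathfrak{h}}=\mathfrak{h}$ forces $N_G(\mathfrak{h})^{\circ}=H$, while $H\subseteq S_0\subseteq N_G(\mathfrak{s}_0)$, so the quotient embeds into the finite group $N_G(\mathfrak{h})/H\hookrightarrow\operatorname{Out}(\mathfrak{h})$. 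The step I expect to demand the most care is the semisimplicity argument: the reduction from $\mathfrak{n}(\mathfrak{s})\neq 0$ to a contradiction via the parabolic normalizer theorem is where the hypothesis $|C_G(H)|<\infty$ genuinely interacts with the global structure of $G$, rather than with pure $\mathfrak{h}$-module theory.
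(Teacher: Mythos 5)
The paper does not prove this lemma at all—it imports it verbatim from Einsiedler--Margulis--Venkatesh \cite{MR2507639} (their Lemma 3.4.1), so there is no in-text argument to compare against. Your reconstruction is a reasonable and essentially correct route, and the skeleton (a) pass to Lie algebras, using $C_G(H)$ finite $\iff\mathfrak{g}^{\mathfrak{h}}=0$, (b) kill the radical and the compact factors of $\mathfrak{s}$, (c) invoke finiteness of conjugacy classes of semisimple subalgebras to count, is in the same spirit as the EMV argument. A few places deserve more care, though.

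The main gap is in the semisimplicity step. You write that $\exp(\mathfrak{n}(\mathfrak{s}))$ is a ``nontrivial connected unipotent subgroup of $G$,'' but the abstract nilradical of a subalgebra $\mathfrak{s}\subseteq\mathfrak{g}$ need not consist of $\operatorname{ad}_{\mathfrak{g}}$-nilpotent elements (already $\mathfrak{n}(\mathfrak{s})=\mathfrak{s}$ for a one-dimensional split torus $\mathfrak{s}$, whose generator is semisimple in $\mathfrak{g}$). To get a genuine unipotent group you should work with $[\mathfrak{s},\mathfrak{r}(\mathfrak{s})]$, whose elements \emph{are} $\operatorname{ad}_{\mathfrak{g}}$-nilpotent by Lie's theorem applied to the faithful representation $\operatorname{ad}_{\mathfrak{g}}|_{\mathfrak{s}}$. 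The fix is painless and actually streamlines your case split: since $\mathfrak{r}(\mathfrak{s})$ is $\operatorname{ad}(\mathfrak{h})$-stable and $\mathfrak{g}^{\mathfrak{h}}=0$, complete reducibility gives $\mathfrak{r}(\mathfrak{s})=[\mathfrak{h},\mathfrak{r}(\mathfrak{s})]\subseteq[\mathfrak{s},\mathfrak{r}(\mathfrak{s})]$, so $\mathfrak{r}(\mathfrak{s})$ itself consists of $\operatorname{ad}_{\mathfrak{g}}$-nilpotent elements and is nilpotent by Engel; if it is nonzero, its exponential is a nontrivial connected unipotent subgroup normalized by $H$, and the Borel--Tits/parabolic argument you describe then produces a positive-dimensional torus in $C_G(H)$, a contradiction. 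This also dispenses with the separate case $\mathfrak{n}(\mathfrak{s})=0$.

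Two smaller remarks. First, the ``classical fact'' you invoke---finitely many $G$-conjugacy classes of semisimple subalgebras of a real semisimple Lie algebra---is a genuine theorem (Richardson's rigidity theorem), not a count of isomorphism types; it is worth naming explicitly, both at the level of $\mathfrak{g}$ and again inside $\mathfrak{s}_0$ where you use it a second time. Second, the embedding you use at the end should read $N_G(\mathfrak{h})/\bigl(H\cdot C_G(H)\bigr)\hookrightarrow\operatorname{Out}(\mathfrak{h})$, not $N_G(\mathfrak{h})/H$; this is harmless since $C_G(H)$ is finite (or one can bypass $\operatorname{Out}(\mathfrak{h})$ entirely by noting that $N_G(\mathfrak{h})$ is $\mathbb{R}$-algebraic, hence has finitely many connected components, and $N_G(\mathfrak{h})^{\circ}=H$ by your normalizer computation). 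With these adjustments the argument is sound.
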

\begin{lem}[Appendix A, \cite{MR2507639}]\label{classifyH} If $G$ is a semisimple Lie group without compact factors, then there exists a finite collection of semisimple subgroups $\mathcal H$ such that the following holds: for any semisimple Lie subgroup $H$ $G$ with no compact factors and $C_G(H)$ finite, there exists $H' \in \mathcal H$ and $g \in G$ such that $H = g H'g^{-1}$.

\end{lem}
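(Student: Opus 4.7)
The plan is to reduce the statement to a question about semisimple Lie subalgebras of $\mathfrak{g} := \Lie(G)$, and then invoke two classical inputs: the Killing--Cartan classification of semisimple Lie algebras and infinitesimal rigidity of semisimple subalgebras (Whitehead's first lemma). Since $H$ is connected (we may replace it by its identity component without affecting the hypotheses) and semisimple, it is determined by its Lie algebra $\mathfrak{h} \subseteq \mathfrak{g}$, and two such subgroups are $G$-conjugate iff their Lie algebras are conjugate under $\mathrm{Ad}(G)$. The hypothesis that $C_G(H)$ is finite translates to the infinitesimal condition $\mathfrak{z}_{\mathfrak{g}}(\mathfrak{h}) = 0$.

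The first step is to reduce to finitely many isomorphism types. By the Killing--Cartan classification there are only finitely many isomorphism classes of real semisimple Lie algebras of dimension at most $\dim G$ and without compact factors. Fix one such abstract Lie algebra $\mathfrak{h}_0$; it then suffices to show that the set of $\mathrm{Ad}(G)$-orbits on
\begin{align*}
\mathcal{E} := \{\,\iota \in \mathrm{Hom}_{\mathrm{Lie}}(\mathfrak{h}_0, \mathfrak{g}) : \iota \text{ injective and } \mathfrak{z}_{\mathfrak{g}}(\iota(\mathfrak{h}_0)) = 0\,\}
\end{align*}
is finite. Selecting one representative $\iota$ from each such orbit and exponentiating to a connected subgroup, across the finitely many choices of $\mathfrak{h}_0$, will produce the desired finite family $\mathcal{H}$.

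The heart of the argument is the rigidity assertion: every $\mathrm{Ad}(G)$-orbit in $\mathcal{E}$ is open. At $\iota \in \mathcal{E}$, the Zariski tangent space to $\mathcal{E}$ inside $\mathrm{Hom}(\mathfrak{h}_0,\mathfrak{g})$ is the space of $1$-cocycles $Z^1(\mathfrak{h}_0, \mathfrak{g})$, where $\mathfrak{g}$ is viewed as an $\mathfrak{h}_0$-module via $\iota$; the tangent space to the $\mathrm{Ad}(G)$-orbit through $\iota$ is the space of coboundaries $B^1(\mathfrak{h}_0, \mathfrak{g})$, that is, the image of the map $X \mapsto [X, \iota(\cdot)]$ from $\mathfrak{g}$. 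Whitehead's first lemma gives $H^1(\mathfrak{h}_0, \mathfrak{g}) = 0$ since $\mathfrak{h}_0$ is semisimple, so $Z^1 = B^1$ and the orbit map $G \to \mathcal{E}$ is submersive at the identity; hence the orbit is open in $\mathcal{E}$. Since $\mathcal{E}$ is a constructible subset of the finite-dimensional vector space $\mathrm{Hom}(\mathfrak{h}_0, \mathfrak{g})$ (cut out by the Jacobi-identity equations together with open conditions), it is Noetherian and hence quasi-compact, so the partition of $\mathcal{E}$ into its pairwise-disjoint open $\mathrm{Ad}(G)$-orbits must be finite.

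The main obstacle I expect is to make the rigidity step precise: correctly identifying the Zariski tangent spaces of $\mathcal{E}$ and of a $G$-orbit with the appropriate cocycle and coboundary spaces, and confirming that $\mathcal{E}$ is an algebraic variety on which $G$ acts algebraically. Once infinitesimal rigidity is established, the finiteness of orbits is a routine consequence of the Noetherian structure, and the theorem follows by collecting one representative subgroup per orbit across the finitely many isomorphism classes $\mathfrak{h}_0$.
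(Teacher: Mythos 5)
The paper does not prove this lemma itself; it cites Appendix~A of Einsiedler--Margulis--Venkatesh, which establishes finiteness of conjugacy classes of such subgroups via rigidity. Your proposal reproduces what is essentially the standard argument (going back to Richardson): reduce to the Lie algebra, identify deformations of an embedding $\iota:\mathfrak{h}_0\hookrightarrow\mathfrak{g}$ with $Z^1(\mathfrak{h}_0,\mathfrak{g})$ and the infinitesimal $\Ad(G)$-action with $B^1$, invoke Whitehead's first lemma to get $Z^1=B^1$, and conclude local transitivity of $\Ad(G)$ on the variety of embeddings. This is the right approach, the tangent-space identifications are correct, and the reduction to finitely many abstract isomorphism types $\mathfrak{h}_0$ of bounded dimension is sound.

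There is, however, one step that is stated too loosely and would fail as written: the final appeal to Noetherianity. Over $\mathbb{R}$, the $\Ad(G)$-orbits in $\mathcal{E}$ are \emph{Euclidean}-open (that is what the submersion criterion gives), but they need not be Zariski-open -- compare $\mathbb{R}^\times$ acting on $\mathbb{R}\setminus\{0\}$ with two Euclidean-open orbits neither of which is Zariski-open. Quasi-compactness of the Zariski topology therefore does not directly bound the number of orbits. The correct finish is either (i) note that $\mathcal{E}$ is a real semialgebraic set and hence has finitely many Euclidean connected components, and each such component, being covered by pairwise-disjoint Euclidean-open orbits, is a single orbit (each orbit is then both open and closed in its component); or (ii) first pass to the complexification, where orbits are Zariski-locally closed and the Noetherian argument does apply, and then descend using finiteness of the relevant real forms. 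A second, smaller caveat: the reduction ``we may replace $H$ by its identity component'' quietly assumes $H$ connected; to cover disconnected $H$ one must further enumerate the finitely many extensions of each $H'^\circ$ inside the finite group $N_G(H'^\circ)/H'^\circ$ (finite because $C_G(H'^\circ)$ is finite and $\mathrm{Out}(H'^\circ)$ is finite), though in the paper's setting $H$ is connected and this is a non-issue. Neither point alters the strategy; they are routine repairs.
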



\section{Linear Algebra Lemma}
In this section we state some key technical lemmas related to the action of horospherical subgroups and diagonal subgroups from \cite{katz/online} and  \cite{MR1403756} and prove extensions of these results. The main result of this section  applies to representations that are not necessarily irreducible.

\begin{lem}[Linear algebra lemma]\label{Lem:General_LA_lemma}
 Suppose $\rho:G\to GL(V)$ is a faithful finite dimensional  representation of a semisimple Lie group $G$. Suppose $V$ decomposes into non-trivial and irreducible subspaces $V=\oplus_iV_i$. There exists an absolute constant $0<\delta_0 = \delta_0(\dim(G))\ll 1$ such that for all $0<\delta<\delta_0$ and $0<c<1$, there exists $t_{\delta,c}= t_{\delta,c}(G, H)>0$  with
$$\frac{1}{m_U(B_2^U)}\int_{B_2 ^{U}}\frac{1}{\|\rho(a_{ t}u)v\|^\delta}\,dm_U(u)< \frac{c}{\|v\|^\delta}$$
for every $v\in V$, $t\ge t_{\delta,c}$.
\end{lem}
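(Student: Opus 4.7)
The plan is to reduce to a single irreducible summand, apply the Kleinbock--Margulis polynomial $(C,\alpha)$-good machinery to pass from an $L^\delta$ estimate to an $L^\infty$ one, and then prove an exponential lower bound for $\sup_{u\in B_2^U}\|\rho(a_tu)v\|$ using the highest $a_t$-weight together with irreducibility. Writing $v=\sum_i v_i$ with $v_i\in V_i$, orthogonality gives $\|\rho(a_tu)v\|^2=\sum_i\|\rho(a_tu)v_i\|^2$, so $\|\rho(a_tu)v\|\ge\|\rho(a_tu)v_{i_*}\|$ for the index $i_*$ maximizing $\|v_i\|$, which satisfies $\|v_{i_*}\|\ge\|v\|/\sqrt n$. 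It therefore suffices to establish the bound on each $V_i$ with $c/n^{\delta/2}$ in place of $c$.

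For the irreducible case, identifying $U$ with $\Lie(U)\cong\R^{d_U}$ via the exponential map makes $u\mapsto\rho(u)v$ polynomial of degree at most $\dim V_i\le\dim G$, hence $u\mapsto\|\rho(a_tu)v\|^2$ is a polynomial of bounded degree. Kleinbock--Margulis then yields $(C,\alpha)$-good constants depending only on $\dim G$, and a standard layer-cake argument converts this into
$$\frac1{m_U(B_2^U)}\int_{B_2^U}\|\rho(a_tu)v\|^{-\delta}\,dm_U(u)\ll_\delta M_t^{-\delta},\qquad M_t:=\sup_{u\in B_2^U}\|\rho(a_tu)v\|,$$
valid whenever $\delta<\delta_0$ for some $\delta_0=\delta_0(\dim G)$. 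The task is thus reduced to producing constants $c',\mu>0$ depending only on the representation with $M_t\ge c'e^{\mu t}\|v\|$.

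This lower bound is the main obstacle. Expand $\rho(a_tu)v=\sum_\beta e^{t\beta(H)}P_\beta(u,v)$ where each $P_\beta$ is polynomial in $u$, linear in $v$, and takes values in the $a_t$-weight space $V_{i,\beta}$; let $\beta_{\max}$ be the $a_t$-weight on $V_i$ with largest value $\beta_{\max}(H)$, which is strictly positive by the non-triviality of the representation. By compactness of $\{v\in V_i:\|v\|=1\}$ and continuity, a uniform bound $\sup_{B_2^U}\|P_{\beta_{\max}}(u,v)\|\ge c\|v\|$ follows provided the linear map $v\mapsto P_{\beta_{\max}}(\cdot,v)$ into polynomials on $U$ is injective. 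Suppose instead $v$ lies in its kernel, so $\rho(U)v\subseteq W':=\bigoplus_{\beta\ne\beta_{\max}}V_{i,\beta}$. The Levi $L$ of the parabolic $P=LU$ preserves each weight space and $U^-$ strictly lowers the $H$-weight, so $\rho(L)W'\subseteq W'$ and $\rho(U^-)W'\subseteq W'$; hence
$$\rho(U^-LU)v\subseteq \rho(U^-)\rho(L)\rho(U)v\subseteq W'.$$
Since $U^-LU$ is the open Bruhat cell, it is Zariski-dense in $G$, and since $W'$ is closed, continuity yields $\rho(G)v\subseteq W'\subsetneq V_i$, contradicting the $G$-irreducibility of $V_i$ unless $v=0$. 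Combining the resulting lower bound $M_t\ge c'e^{\mu t}\|v\|$ with the earlier integral estimate, the lemma follows upon choosing $t_{\delta,c}$ large enough to absorb the implicit constants.
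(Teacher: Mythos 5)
Your proof is correct and, at the top level, reduces the general case to the irreducible case much as the paper does (you use the $L^2$ splitting $\|v\|^2=\sum_i\|v_i\|^2$, the paper uses the max norm; both just locate a single dominant summand $V_{i_*}$). The genuine difference is downstream: the paper stops there and cites Katz's irreducible Lemma (Lemma~\ref{Lem:Katz_irreducible_LA_lemma}), whereas you re-derive that lemma from scratch via the Kleinbock--Margulis $(C,\alpha)$-good machinery plus a Bruhat-cell argument. In effect you have reproved the content of Katz's Anchor Lemma and his Lemma 2.3, and your injectivity argument for the top-weight projection $v\mapsto P_{\beta_{\max}}(\cdot,v)$ --- passing from $\rho(U)v\subseteq W'$ to $\rho(U^-LU)v\subseteq W'$ and then using Zariski density of the big cell to contradict irreducibility --- is a clean, self-contained way to get the needed exponential lower bound on $\sup_{u\in B_2^U}\|\rho(a_tu)v\|$. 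This buys you independence from the external reference, at the cost of a longer argument.

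Two small points worth tightening. First, the claim that $u\mapsto\rho(u)v$ has degree at most $\dim V_i\le\dim G$ is not true for an arbitrary faithful representation; the degree is governed by the $a_t$-weight span of $V_i$, and an irreducible $G$-representation can have dimension (and weight span) far exceeding $\dim G$. In the paper's applications --- the adjoint representation and the exterior-power representations $W_k$ with $\dim W_k\le\dim(G)^2$ (Remark~\ref{Wk}) --- this is controlled, which is what really justifies $\delta_0=\delta_0(\dim G)$; as stated your degree bound is too strong. Second, the strict positivity of $\beta_{\max}(H)$ (equivalently, that $\{a_t\}$ does not act trivially on $V_i$) is not an automatic consequence of $G$-irreducibility and non-triviality when $G$ is semisimple but not simple; it relies on the standing hypotheses about $H$ and $U$ in the paper, and is the same implicit assumption that underlies Katz's Anchor Lemma. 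Neither point is a fatal flaw in the context of the paper, but both deserve a sentence if the argument is to stand on its own.
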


We now state some lemmas from Katz \cite{katz/online}. In brief, these lemmas show that the action of the diagonal and horspherical subgroups on a vector space expand the norm. While we will follow the exposition of Katz \cite{katz/online}, we would like to draw the readers attention to Shah \cite{MR1403756}, specifically Section 5.

The following lemmas are essentially Lemma 3.1 and Lemma 3.2 of \cite{katz/online}.

\begin{lem}[Lemma 3.1, \cite{katz/online}]
 Let $u = \exp(\underline{u}) \in B_r ^{U}.$ There exists polynomials $f_{\beta,j}:B_r ^{\Lie(U)}\to \mathbb R$ with
$$\rho(u)v = \sum_{\beta\in \Phi} \sum_{j=1} ^{\text{\rm{dim}} V_\beta}f_{\beta,j}(\underline{u})v_{\beta,j}.$$
\end{lem}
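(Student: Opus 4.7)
The plan is to reduce the statement to the fact that the exponential series for $\rho(u) = \rho(\exp \underline{u})$ terminates as a polynomial in $\underline{u}$, once we know $d\rho(\Lie(U))$ consists of nilpotent operators on $V$.

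First, I would establish that $d\rho(X)$ is a nilpotent endomorphism of $V$ for every $X \in \Lie(U)$. The key input is that $\Lie(U)$ is spanned by $\Ad(a_t)$-eigenvectors with strictly positive eigenvalues, by definition of the unstable horospherical subalgebra. If $X \in \Lie(U)$ has $\Ad(a_t)$-weight $\lambda > 0$, then differentiating the identity $\rho(a_t \exp(X) a_t^{-1}) = \rho(\exp(\Ad(a_t) X))$ gives
$$\rho(a_t) \, d\rho(X) \, \rho(a_t)^{-1} = d\rho(\Ad(a_t) X) = e^{\lambda t} \, d\rho(X).$$
Combined with the weight decomposition $V = \oplus_{\beta \in \Phi} V_\beta$, this shows that $d\rho(X)$ carries a vector of $a_t$-weight $\beta(\log a_t)$ into the sum of weight spaces whose $a_t$-weight is exactly $\lambda$ larger. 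Hence $d\rho(\Lie(U))$ lies in the Lie subalgebra of strictly weight-raising operators in $\mathrm{End}(V)$, which is nilpotent since the set of $a_t$-weights occurring in $V$ is finite. Consequently there is an integer $N \le \dim V$ such that $d\rho(\underline{u})^{N} = 0$ for every $\underline{u} \in \Lie(U)$.

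Second, I would apply functoriality of $\exp$ to conclude
$$\rho(u) v \;=\; \rho(\exp \underline{u}) v \;=\; \exp\!\bigl(d\rho(\underline{u})\bigr) v \;=\; \sum_{k=0}^{N-1} \frac{1}{k!}\, d\rho(\underline{u})^k v,$$
which is a finite sum. Fix a basis $\{X_1, \ldots, X_{d_U}\}$ of $\Lie(U)$ and write $\underline{u} = \sum_i u^i X_i$. Then $d\rho(\underline{u}) = \sum_i u^i \, d\rho(X_i)$ depends linearly on the coordinates $(u^i)$, so each $d\rho(\underline{u})^k v$ is a vector-valued polynomial of degree $k$ in $(u^1, \ldots, u^{d_U})$. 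Expanding the result in the basis $\{v_{\beta,j}\}$ extracts the scalar coefficient functions $f_{\beta,j}(\underline{u})$, each of which is a polynomial of degree at most $N-1$ in the coordinates of $\underline{u}$. Restricting to $\underline{u} \in B_r^{\Lie(U)}$ yields the desired polynomials.

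There is no serious obstacle: the lemma encodes the classical fact that unipotent subgroups act polynomially in any finite-dimensional representation. The one point to verify carefully is the nilpotence of $d\rho|_{\Lie(U)}$, which follows immediately from the weight-raising property; everything else is a mechanical consequence of the truncated exponential series.
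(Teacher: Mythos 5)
Your argument is correct, and it is the standard proof of this classical fact: $\Lie(U)$ consists of $\Ad(a_t)$-weight vectors of strictly positive weight, so $d\rho(X)$ is strictly weight-raising (hence nilpotent) on $V$, the exponential series truncates, and the matrix coefficients become polynomials in the linear coordinates of $\underline{u}$. The paper itself supplies no proof beyond the one-line remark that Katz's Lemma 3.1 proof extends to arbitrary radius $r$, so there is nothing substantive in the paper to contrast with; your proposal fills in the argument that Katz's source would contain. One small phrasing quibble: when you say $d\rho(X)$ moves a vector of $a_t$-weight $\beta(\log a_t)$ to weight ``exactly $\lambda$ larger,'' you should keep the two numbers on the same footing --- e.g.\ if $X$ is a weight vector for the root $\alpha$, write that $d\rho(X)$ maps $V_\beta$ into $V_{\beta+\alpha}$ (interpreted as $0$ if $\beta+\alpha$ is not a weight of $V$), and then nilpotence follows from finiteness of the weight set. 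Also note that a general $X\in\Lie(U)$ is a sum of root vectors for several positive roots, but since all such $d\rho(X_\alpha)$ raise weight along a fixed positive cone, their sum is still nilpotent; you cover this correctly by appealing to the subalgebra of weight-raising operators rather than a single root.
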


\begin{proof}
The proof of Lemma 3.1 in \cite{katz/online} works for any $u\in B_r ^U$ where $r>0$.
\end{proof}

\begin{lem}[Anchor Lemma, Lemma 3.2, \cite{katz/online}]\label{Lem:Katz_Non-trivial_projection} Let $\rho:G\to GL(V)$ be a finite dimensional irreducible representation of a semisimple Lie group $G$. Then for any $r>0$ and non-zero $v\in V$, there is a positive root $\beta\in \Phi^+$ and $1\le j\le \text{dim }V_\beta$ such that $$\sup_{u\in B_r ^{U}}|f_{\beta,j}(u)|>0.$$
\end{lem}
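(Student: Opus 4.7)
My plan is to argue by contradiction: suppose some non-zero $v\in V$ violates the conclusion, so that for every positive weight $\beta\in\Phi^+$ and every index $j$ the polynomial $f_{\beta,j}$ vanishes on $B_r^U$. Because each $f_{\beta,j}$ is the restriction of a polynomial on $\Lie(U)$ and $B_r^U$ contains a non-empty open set, these polynomials must vanish identically on $\Lie(U)$, and hence on all of $U$. Consequently $\rho(u)v$ has no positive-weight component for any $u\in U$; equivalently, $\rho(u)v\in W$ for every $u\in U$, where $W:=\bigoplus_{\beta:\beta(a_t)\le 0}V_\beta$ is the sum of the non-positive weight spaces.

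The next step is to upgrade this $U$-statement to a statement about all of $G$, which will then contradict irreducibility. The key observation is that $W$ is stable under the opposite parabolic $P^-:=LU^-$, where $L=Z_G(\{a_s\}_{s\in\R})$ is the Levi and $U^-$ is the stable horospherical subgroup: $L$ preserves every weight space, and $\rho(U^-)$ can only shift into lower weights. Using the Bruhat big cell---the multiplication map $U^-\times L\times U\to G$ is a real analytic diffeomorphism onto a dense open subset of $G$---I conclude that for every $g=u^-lu$ in the big cell, $\rho(g)v=\rho(u^-)\rho(l)\rho(u)v\in W$. Since $g\mapsto\rho(g)v$ is real analytic into the finite dimensional space $V$ and $W$ is closed, density of the big cell gives $\rho(g)v\in W$ for all $g\in G$. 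Thus the $G$-invariant subspace generated by $v$ is contained in $W$, and irreducibility together with $v\neq 0$ forces $W=V$.

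To close, I use that $G$ is semisimple and hence perfect, so $\rho(G)\subseteq SL(V)$, which gives the relation $\sum_\beta(\dim V_\beta)\beta(a_t)=0$. Combined with $\beta(a_t)\le 0$ for every weight $\beta$, this forces every weight to vanish on $\{a_s\}$, so $\rho(a_s)=I$ for all $s$; faithfulness of $\rho$ then yields $a_s=e$, contradicting the standing assumption that $\{a_t\}$ is a nontrivial one-parameter subgroup of diagonalizable elements.

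The step I expect to be the principal obstacle is the promotion from $U$-invariance to full $G$-invariance: it requires identifying the correct closed subspace $W$, verifying its stability under both $L$ and $U^-$, and invoking density of the Bruhat big cell so that the polynomial map $g\mapsto \rho(g)v$ is forced into $W$ globally. Once this is in place, the polynomial-vanishing input at the start and the trace relation at the end are essentially routine.
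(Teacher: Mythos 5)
Your proof is correct and is the standard argument for this type of anchor lemma. The paper itself gives no proof here---it simply defers to Katz with the remark that his argument runs verbatim for arbitrary $r>0$---so a line-by-line comparison with the paper is not possible, but the route you take is exactly the expected one: pass from vanishing of the positive-weight projections on $B_r^U$ to vanishing on all of $U$ by polynomiality and openness, observe that the non-positive $\{a_t\}$-weight subspace $W$ is stable under the opposite parabolic $P^-=LU^-$ (since $L$ preserves $\{a_t\}$-weight spaces and $U^-$ only lowers $\{a_t\}$-weight), promote $U$-invariance of the statement to $G$-invariance by density of the big Bruhat cell $U^-LU$ and closedness of $W$, invoke irreducibility to force $W=V$, and close with the determinant relation $\sum_\beta(\dim V_\beta)\,\lambda_\beta=0$ coming from $\rho(G)\subseteq SL(V)$. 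Every individual step is sound.

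One point worth flagging: your final contradiction invokes faithfulness of $\rho$, which is not among the hypotheses of the lemma as stated. Some non-degeneracy assumption of this type is genuinely needed---if $\{a_t\}\subseteq\ker\rho$, every weight of $V$ vanishes on $\{a_t\}$, there is no positive $\{a_t\}$-weight space, and the conclusion fails---so you have in effect identified an unstated hypothesis rather than committed an error. Note, though, that what you actually use is weaker than faithfulness: it suffices that $\{a_t\}$ act non-trivially on $V$, since then the relation $\sum_\beta(\dim V_\beta)\,\lambda_\beta=0$ together with all $\lambda_\beta\le 0$ already contradicts the existence of a non-zero $\lambda_\beta$. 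This weaker phrasing is the one that matches how the lemma is consumed in Lemma \ref{Lem:General_LA_lemma}, where $\rho$ is assumed faithful on the whole of $V=\oplus_i V_i$ but its restriction to a single irreducible factor $V_i$ need not be.
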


\begin{proof}
The proof of Lemma 3.2 in \cite{katz/online} works for any open ball $B_r ^U$ with $r>0$.
\end{proof}

By the Anchor lemma, the projection of the action of $U$ in the expanding direction is nonzero. Thus, the norm under the action of $a_{t}$ grows. By noting that Lemmas 3.1 and 3.2 of Katz \cite{katz/online} hold for any open ball $B_r^U$, we have the following minor generalization of Lemma 2.3 of \cite{katz/online}.

\begin{lem}\label{Lem:Katz_irreducible_LA_lemma}
 Suppose $\rho:G\to GL(V)$ is an irreducible finite dimensional  representation of a semisimple Lie group $G$. There exists $0<\delta_0 = \delta_0(\text{\rm{dim}\it}(G))\ll 1$ such that for all $0<\delta<\delta_0$ and $0<c<1$, there exists $t_c>0$ with
$$\frac{1}{m_U(B_2^U)}\int_{B_2^U}\frac{1}{\|\rho(a_{ t} u)v\|^\delta}\,dm_U(u)< \frac{c}{\|v\|^\delta}$$
for every $v\in V$ and $t\ge t_c$.
\end{lem}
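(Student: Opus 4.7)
The plan is to adapt the proof of \cite[Lemma 2.3]{katz/online} essentially verbatim, using the observation already made in the excerpt that Lemma 3.1 and the Anchor Lemma hold on any ball $B_r^U$ with $r > 0$. The strategy has three ingredients: express $\rho(a_t u)v$ explicitly via the polynomial expansion of Lemma 3.1; use the Anchor Lemma together with a finite-dimensional compactness argument to produce a positive-weight coefficient whose supremum over $B_2^U$ is uniformly bounded below in $v$; and combine a Remez-type polynomial sublevel-set estimate with an elementary operator-norm bound on the resulting exceptional set.

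First I would normalize $\|v\|=1$ by homogeneity and use Lemma 3.1 to write
$$\rho(a_t u)v \;=\; \sum_{\beta \in \Phi} \sum_{j=1}^{\dim V_\beta} e^{t\lambda_\beta}\, f_{\beta,j}^{v}(\underline u)\, v_{\beta,j}, \qquad \lambda_\beta := \beta(\log a_1),$$
where the assignment $v \mapsto f_{\beta,j}^{v}$ is linear and the polynomials have degree bounded by some $d = d(\dim G)$. The Anchor Lemma says that for every $v \neq 0$ there is a pair $(\beta^+, j^+)$ with $\beta^+ \in \Phi^+$ and $f_{\beta^+, j^+}^{v}$ not identically zero on $B_2^U$. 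The map $v \mapsto (f_{\beta,j}^{v})_{\beta \in \Phi^+,\, j}$ is therefore an injective linear map between finite-dimensional normed spaces, so by compactness of the unit sphere in $V$ there is a constant $q = q(G, \rho) > 0$ with
$$\max_{\beta \in \Phi^+,\, j}\, \sup_{u \in B_2^U}\,|f_{\beta,j}^{v}(u)| \;\ge\; q \qquad \text{for every unit } v.$$
I would fix such a $(\beta^+, j^+)$ for each $v$ and set $\lambda^+ := \lambda_{\beta^+} \ge \lambda_0$ where $\lambda_0 := \min_{\beta \in \Phi^+} \lambda_\beta > 0$.

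The remainder is bookkeeping. The Kleinbock--Margulis sublevel-set estimate for polynomials of degree at most $d$ on $B_2^U$ gives, for some constant depending only on $d$ and $\dim U$,
$$m_U\bigl(\{u \in B_2^U : |f_{\beta^+,j^+}^{v}(u)| < \eta\, q\}\bigr) \;\lesssim\; \eta^{1/d}\, m_U(B_2^U).$$
With $\eta := e^{-t\lambda_0/2}$, on the ``good'' complementary set the bound $\|\rho(a_t u)v\| \gtrsim e^{t\lambda^+}\, |f_{\beta^+,j^+}^{v}(u)|$ yields $\|\rho(a_t u)v\| \gtrsim q\, e^{t\lambda_0/2}$, while on the ``bad'' set I would use the crude bound $\|\rho(a_t u)v\| \gtrsim e^{-tC}$, where $C$ is the largest absolute value of a weight of $\rho$ along $\{a_t\}$ (bounded in terms of $\dim G$). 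Splitting the integral gives
$$\frac{1}{m_U(B_2^U)}\int_{B_2^U}\frac{dm_U(u)}{\|\rho(a_t u)v\|^\delta} \;\lesssim\; e^{-t\lambda_0\delta/2} \;+\; e^{-t\lambda_0/(2d) + tC\delta}.$$
Setting $\delta_0 := \lambda_0/(4Cd)$, which depends only on $\dim G$, both exponents are negative for every $0 < \delta < \delta_0$, and taking $t_c$ large enough forces the right-hand side below any prescribed $c \in (0,1)$.

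I expect the main obstacle to be \emph{promoting the pointwise Anchor Lemma to a uniform-in-$v$ lower bound}, since the Anchor Lemma only asserts nonvanishing for each fixed $v$. This is exactly where the compactness argument in the second paragraph enters: triviality of the kernel of the linear map $v \mapsto (f_{\beta,j}^v)_{\beta \in \Phi^+,\, j}$ forces it to be bounded below between finite-dimensional normed spaces. Once this uniformity is secured, the polynomial sublevel-set estimate and the exponential balancing are routine.
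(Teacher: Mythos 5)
Your proposal is correct and is precisely the argument underlying \cite[Lemma 2.3]{katz/online}, which the paper invokes directly rather than reproving: the paper simply observes that Lemmas 3.1 and 3.2 of Katz hold for any ball $B_r^U$ and cites the conclusion, while you reconstruct the three ingredients (polynomial expansion, compactness upgrade of the Anchor Lemma to a uniform lower bound $q$, and the $(C,\alpha)$-good sublevel-set estimate balanced against the weight exponentials). The one point worth stating explicitly, which you implicitly handle, is that $\delta_0$ and $t_c$ become uniform over $H$ (and hence over $\{a_t\}$ and $U$) only because Lemma~\ref{classifyH} reduces to finitely many conjugacy classes of $H$ and the representations in play (adjoint and the $\rho_k$) have dimension bounded by $\dim(G)^2$.
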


We conclude with a proof of the lemma stated at the beginning of the section.

\begin{proof}[Proof of Lemma \ref{Lem:General_LA_lemma}]

This essentially follows from the irreducible version of \cite{katz/online}. We equip $V=\oplus_iV_i$ with the max norm. That is, for $v=(v_i)$, $\|v\|=\max_i\|v_i\|$.  We also note that the inequality we aim to prove is independent of the choice of norm.
 
 Let $0<c<1$ and $0<\delta<\delta_0$. We will choose $\delta_0$ in the course of the proof.

Given $v=(v_i)$, let $i_0$ be the index with $\|v\|=\|v_{i_0}\|$. Then,
$$\|\rho(a_{t}u)v\|=\max_i \|\rho(a_{t}  u)v_i\|\ge \|\rho(a_{t}u)v_{i_0}\|.$$
By the irreducible case (Lemma \ref{Lem:Katz_irreducible_LA_lemma}), we have the existence of $\delta_i\in(0,1)$ such that for every $\delta\in(0,\delta_i)$ contraction occurs for every $v_i\in V_i$ and $t$ sufficiently large. To finish the proof, take $\delta_0 :=\min_i \delta_i$ and we have
\begin{align*}
\frac{1}{m_U(B_2 ^U)}\int_{B_2 ^{U}}\frac{1}{\|\rho(a_{ t} u)v\|^\delta}\,dm_U(u)&\le \frac{1}{m_U(B_2 ^U)} \int_{B_2 ^{U}}\frac{1}{\|\rho(a_{ t} u)v_{i_0}\|^\delta}\,dm_U(u)\\
&< \frac{c}{\|v_{i_0}\|^\delta}=\frac{c}{\|v\|^\delta}
\end{align*}
for every $v\in V$ and $t$ sufficiently large.
\end{proof}

Now we apply the above Linear algebra lemma to a specific representation which will be used to control the height function. First we give a definition. See also the end of section 2 of Eskin--Margulis \cite{MR2087794}. 

\begin{defn}[Maximal Parabolic Subgroups]\label{Def:Height_rep}
 When a lattice $\Gamma$ is non-uniform, we define a finite collection $\Delta$ of maximal parabolic subgroups of $G$ as follows. A parabolic subgroup $P$ of $G$ is called $\Gamma$-rational if $\,\Gamma \cap R_u(P)$ is a lattice in $R_u(P)$, where $R_u(P)$ is the unipotent radical of $P$. If $G$ is of real rank 1, then we let $\Delta = \{P_0\}$ where $P_0$ is a $\Gamma$-rational minimal parabolic subgroup of $G$. The existence of $P_0$ follows from Garland--Raghunathan \cite{MR267041}. If the real rank of $G$ is not less than $2$, then by the Margulis Arithmiticity theorem, $\Gamma$ is arithmetic. Hence we let $\Delta=\{P_1,P_2,...,P_r\}$ where $P_k$ are standard parabolic subgroups of $G$, with respect to its maximal $\mathbb Q$-split torus $A_0$. For every $1 \le k \le r$, there exists a finite-dimensional irreducible representation $\rho_k:G\to GL(W_k)$ and vectors $w_k\in W_k$ such that the stabilizer of $\mathbb R w_k$ is $P_k$.
\end{defn}
 
\begin{remark}[Upper bound on the dimension of $W_k$]\label{Wk} For later computational purposes (see Lemma \ref{Lem:Log_continuity_height} and Lemma \ref{kappa}), we take $W_k$ to have dimension no greater than $\dim(G)^2$. We can do so by choosing
$$W_k := \wedge^{\dim(R_u(P_k))}\Lie(R_u(P_k)) \subseteq \wedge^{\dim(R_u(P_k))}\Lie(G) $$
and $w_k$ to be a normalized diagonal element of $W_k$.

\end{remark}

The following result is an analogue of Condition A of Eskin--Margulis \cite{MR2087794}. We can deduce it by applying Lemma \ref{Lem:General_LA_lemma} to the representation above.

\begin{lem}[Linear algebra lemma for Height functions]\label{Lem:LA_lemma} If $\Gamma$ is a non-uniform lattice of $G$, then there exists $0<\delta_1 = \delta_1(\dim(G))\ll 1$ such that for all $0<\delta<\delta_1$ and $0<c<1$, there exists $t_{\delta,c} = t_{\delta,c}(G, H)>0$ such that for every $v\in Gw_k$ and $t\ge t_{\delta,c}$,
$$\frac{1}{m_U(B_2^U)}\int_{B_2 ^{\Lie(U)}}\frac{1}{\|\rho_k(a_{ t}u)v\|^\delta}\,dm_U(u)< \frac{c}{\|v\|^\delta}.$$
\end{lem}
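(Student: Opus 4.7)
The plan is to reduce this lemma directly to the irreducible case already established in Lemma \ref{Lem:Katz_irreducible_LA_lemma}. Recall from Definition \ref{Def:Height_rep} that we have a finite collection $\Delta = \{P_1, \ldots, P_r\}$ of $\Gamma$-rational maximal parabolic subgroups, and for each $P_k$ there is a finite-dimensional \emph{irreducible} representation $\rho_k : G \to GL(W_k)$ with a distinguished vector $w_k \in W_k$. Since each $\rho_k$ is irreducible, Lemma \ref{Lem:Katz_irreducible_LA_lemma} applies directly to $\rho_k$ (faithfulness is not needed in the irreducible case), without invoking the full strength of Lemma \ref{Lem:General_LA_lemma}.

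Concretely, for each $k \in \{1, \ldots, r\}$, Lemma \ref{Lem:Katz_irreducible_LA_lemma} produces a constant $\delta_{0,k} > 0$ such that for every $0 < \delta < \delta_{0,k}$ and $0 < c < 1$, there exists $t_{\delta, c, k} > 0$ with
$$\frac{1}{m_U(B_2^U)}\int_{B_2^{U}}\frac{1}{\|\rho_k(a_{t}u)v\|^\delta}\,dm_U(u) < \frac{c}{\|v\|^\delta}$$
for every nonzero $v \in W_k$ and every $t \ge t_{\delta, c, k}$. The inequality then holds in particular for every $v \in G w_k \subseteq W_k$. Since the collection $\Delta$ is finite, we simply set
$$\delta_1 := \min_{1 \le k \le r} \delta_{0,k}, \qquad t_{\delta, c} := \max_{1 \le k \le r} t_{\delta, c, k},$$
and the conclusion of the lemma holds simultaneously for all $k$.

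It remains to verify the claimed \emph{dependence} of the constants, namely $\delta_1 = \delta_1(\dim(G))$ and $t_{\delta,c} = t_{\delta,c}(G, H)$. The constant $\delta_{0,k}$ coming from Katz's argument depends on the representation $\rho_k$ only through its dimension and weight data. By Remark \ref{Wk}, we have chosen $W_k$ as a subspace of $\wedge^{\dim(R_u(P_k))} \Lie(G)$, so $\dim W_k \le \dim(G)^2$; the weights of $\rho_k$ are sums of weights of the adjoint representation, and hence are controlled by the root data of $G$. Consequently $\delta_{0,k}$, and therefore $\delta_1$, depends only on $\dim(G)$. The constant $t_{\delta,c,k}$ additionally depends on the choice of $\{a_t\} \subseteq H$ (which determines $U$), yielding the dependence $t_{\delta,c} = t_{\delta,c}(G,H)$.

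There is no substantive obstacle here; this is essentially a packaging step that assembles finitely many instances of Lemma \ref{Lem:Katz_irreducible_LA_lemma} into a single statement uniform over the height representations $\{\rho_k\}_{k=1}^r$. The only point requiring care is the bookkeeping of constants so that $\delta_1$ depends on nothing more than $\dim(G)$, which is where the dimensional bound of Remark \ref{Wk} is essential.
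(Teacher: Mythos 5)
Your proof is correct and follows essentially the same approach as the paper: the paper deduces Lemma~\ref{Lem:LA_lemma} by applying Lemma~\ref{Lem:General_LA_lemma} to the height representations $\rho_k$, and since each $\rho_k$ is already irreducible, invoking Lemma~\ref{Lem:General_LA_lemma} (itself proved by reduction to the irreducible case) is equivalent to your direct appeal to Lemma~\ref{Lem:Katz_irreducible_LA_lemma}. Your bookkeeping of why $\delta_1$ depends only on $\dim(G)$ via Remark~\ref{Wk} also matches the paper's intended dependence.
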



\section{Abstract Margulis inequality}

In this section we prove an abstract result that yields exponential decay for Margulis functions.

\begin{thm}\label{Thm:Abstract_exp_decay}
Suppose $F:X\to(0,\infty)$ satisfies the following properties
\begin{itemize}
    \item (Log Continuity) For any compact subset $K\subset G$, there exists $\sigma=\sigma_F(K)>1$ such that for all $g\in K$ and $x\in X$,
    $$\sigma^{-1} F(x)\le F(gx)\le \sigma F(x)$$
    \item (Margulis Inequality for F) There exists constants $0< c <1$, $t=t_c \gg 1$, and $b>0$ such that for any $x\in X$,
    $$(A_{2,t}F)(x) := \frac{1}{m_U(B_2^U)}\int_{B_2 ^{U}}F(a_tux)\,dm_U(u) < c F(x) + b.$$
\end{itemize}
Then, there exists absolute constants $C>0$ and $B>0$ such that for any $t \ge t_c$,
$$(A_{t} F)\le C \cdot c^{t/t_c} F+B.$$
\end{thm}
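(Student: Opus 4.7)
My plan is to treat $A_{2,t_c}$ as a Markov-type operator and iterate the Margulis inequality, then convert the resulting bound on $A_{2,t_c}^n F$ into a bound on $A_{nt_c} F$ via a change of variables in the nilpotent group $U$. The final passage to arbitrary $t \ge t_c$ will follow from a standard application of log continuity.

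First I would iterate. Since the Margulis inequality $A_{2,t_c} F(y) < cF(y) + b$ holds at every point $y \in X$, applying it at $y = a_{t_c} u x$ for $u \in B_2^U$ and averaging over $u$ yields $A_{2,t_c}^2 F(x) \le c\,A_{2,t_c}F(x) + b$. Inducting gives
\[
A_{2,t_c}^n F(x) \;\le\; c^n F(x) + \frac{b}{1-c}
\]
for every $n \ge 1$.

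Next I would unfold the composition. Using $a_{t_c} u = (a_{t_c} u a_{-t_c})\, a_{t_c}$ repeatedly, the $n$-fold product can be written as
\[
a_{t_c} u_1 \, a_{t_c} u_2 \cdots a_{t_c} u_n \;=\; a_{nt_c} \cdot v_1 v_2 \cdots v_n, \qquad v_j := a_{-(n-j) t_c}\, u_j\, a_{(n-j) t_c} \in U.
\]
Because $\Ad(a_{-t})|_{\Lie(U)}$ contracts exponentially for $t > 0$, each $v_j$ with $j < n$ is exponentially small in $(n-j)\, t_c$, while $v_n = u_n \in B_2^U$. For $t_c \gg 1$, the Baker--Campbell--Hausdorff formula together with a geometric series places the partial product $g := v_1 \cdots v_{n-1}$ in $B_{1/2}^U$ uniformly in $n$. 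For fixed $u_1, \ldots, u_{n-1}$, the map $u_n \mapsto w := g u_n$ is a left translation on $U$, hence Haar preserving, and for any $w \in B_1^U$ the preimage $u_n = g^{-1} w$ lies in $B_{3/2}^U \subset B_2^U$. Restricting the $n$-fold integral to the fiber over $w \in B_1^U$ therefore gives
\[
A_{2,t_c}^n F(x) \;\ge\; \frac{m_U(B_2^U)^{n-1}}{m_U(B_2^U)^n} \int_{B_1^U} F(a_{nt_c}\, w\, x)\, dw \;=\; \frac{m_U(B_1^U)}{m_U(B_2^U)}\, A_{nt_c} F(x).
\]
Combined with the iteration bound, this yields $A_{nt_c} F(x) \le C_0\, c^n F(x) + B_0$ for constants depending only on $b$, $c$, and $m_U(B_2^U)/m_U(B_1^U)$.

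Finally, for general $t \ge t_c$, I would write $t = n t_c + s$ with $n \ge 1$ and $0 \le s < t_c$. Since $a_s$ lies in the fixed compact set $\{a_r : 0 \le r \le t_c\}$, log continuity gives $F(a_s y) \le \sigma F(y)$ for some $\sigma$, and hence $A_t F(x) \le \sigma A_{nt_c} F(x)$. Using $c^n \le c^{-1} c^{t/t_c}$ and absorbing $\sigma$ and $c^{-1}$ into the final constants produces the desired bound. The principal obstacle is the density estimate in the change-of-variables step, specifically controlling $\|v_1 \cdots v_{n-1}\|$ uniformly in $n$ in the non-abelian nilpotent setting so that the inclusion $B_1^U \subseteq g\, B_2^U$ holds; this is where the hypothesis $t_c \gg 1$ enters crucially.
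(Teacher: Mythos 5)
Your proof is correct and follows essentially the same route as the paper: iterate the Margulis inequality, decompose the $n$-fold product as $a_{nt_c}$ times a contracted product in $U$, use the translation invariance of Haar measure and the inclusion $B_1^U \subseteq g\,B_2^U$ (for $t_c \gg 1$) to compare $A_{nt_c}F$ against $A_{2,t_c}^n F$, and finish with log continuity for general $t$. The only cosmetic difference is that you average the fiber estimate over all $(u_1,\dots,u_{n-1})$, whereas the paper selects a single tuple by pigeonhole; both yield the same bound.
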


\begin{proof}
\textbf{Step 1:}
We find an upper bound on $A_{nt} F$.

Recall, $$(A_{2,t}f)(x) := \frac{1}{m_U(B_2 ^U)}\int_{B_2 ^U}F(a_{t}u x)\,dm_U(u).$$

By iterating our operator, we have $$(A_{2,t} ^n F)(x)<c^nF(x)+B'$$
where $B'=O(b)=b\sum_{j=0} ^n c^j$.
On the other hand, 
$$(A_{2,t} ^n F)(x) = \frac{1}{m_U ^n(B_2 ^U)}\int_{(B_2 ^U)^{n-1}}\left(\frac{1}{m_U(B_2 ^U)}\int_{B_2 ^U}F(a_{nt}\phi_n(\vec{u})u_1x)\,dm_U(u_1)\right)\,d(m_U)^{n-1}(\vec{u})$$
where $\phi:(B_2 ^U)^{n-1}\to U$ is given by
$$\vec{u}=(u_n,\ldots, u_2)  \mapsto  \phi_n(\vec{u}) = (a_{-(n-1)t} u_n a_{(n-1)t}) \cdots (a_{-2t}u_3a_{2t})(a_{-t}u_2a_t).$$
Hence, there must exist some $\vec{u}\in (B_2 ^U)^{n-1}$ so that 
$$\frac{1}{m_U(B_2 ^U)}\int_{B_2 ^U}F(a_{nt}\phi_n(\vec{u})u_1x)\,dm_U(u_1)<c^nF(x)+B'.$$

Now note that since $t$ is large, $\phi((B_2 ^U)^{n-1}) \subseteq B_1^U$ and so  $\phi(\vec{u})^{-1}B_1 ^U\subseteq B_2 ^U$. Hence,
$$\frac{1}{m_U(B_2 ^U)}\int_{\phi(\vec{u})^{-1}B_1 ^U}F(a_{nt}\phi_n(\vec{u})u_1x)\,dm_U(u_1)\le \frac{1}{m_U(B_2 ^U)} \int_{B_2 ^U}F(a_{nt}\phi_n(\vec{u})u_1x)\,dm_U(u_1).$$
Make the substitution $v=\phi(\vec{u})^{-1}u$ and note that $m_U$ is translation invariant to obtain
$$\frac{1}{m_U(B_2 ^U)}\int_{\phi(\vec{u})^{-1}B_1 ^U}F(a_{nt}\phi_n(\vec{u})u_1x)\,dm_U(u_1) = \frac{1}{m_U(B_2 ^U)} \int_{B_1 ^U}F(a_{nt}vx)\,dm_U(v).$$

Putting everything together, we obtain 
$$(A_{nt} F)(x) = \int_{B_1 ^U}F(a_{nt}vx)\,dm_U(v)< m_U(B_2 ^U)c^nF(x)+B$$
where $B=B'm_U(B_2 ^U)$.

\textbf{Step 2:} Now we use the previous step to show for arbitrarily large $t\ge t_c$ we have a bound on $A_tF$.

Let $K_1 = \{a_t:0 \le t \le t_c\}$ be a fixed compact set and let $\sigma_1 = \sigma_F(K_1)$ be the constant from the log continuity property of $F$. Suppose that $t \ge t_c$, and let $\lfloor t/t_c\rfloor=n$. Since $0 \le t-nt_c < t_c$, by log-continuity of $F$,
$$\int_{B_1 ^U} F(a_{t}ux)\,dm_U(u)=\int_{B_1 ^U} F(a_{t-nt_c}\cdot a_{nt_c}ux)\,dm_U(u)\le \sigma_1\int_{B_1 ^U} F(a_{nt_c}ux)\,dm_U(u).$$

By \textbf{Step 1},
$$\int_{B_1 ^U} F(a_{nt_c}ux)\,dm_U(u) = (A_{nt}F)(x) \le m_U(B_2 ^U)c^n F(x) + B$$
and thus,
\begin{align*}  
\int_{B_1 ^U} F(a_{t}ux)\,dm_U(u) &\le \sigma_1 \left(c^n m_U(B_2 ^U)F(x) + B\right) =  \sigma_1m_U(B_2 ^U) c^{\lfloor t/t_c\rfloor}F(x) + \sigma_1 B \\
& \le \sigma_1m_U(B_2 ^U) c^{t/t_c -1} F(x) + \sigma_1 B.
\end{align*}
Letting $C = \sigma_1m_U(B_2 ^U)c^{-1}$ and relabeling $\sigma_1 B$ as $B$ finishes the proof.
\end{proof}


\section{Height functions and Margulis inequality}

Throughout this section, $\Gamma$ will be taken to be a non-uniform lattice of $G$. Thus, the space $X=G/\Gamma$ has a cuspidal part and we construct a height function $h$ on $X$ that measures how high a point $x\in X$ is in the cusp. We prove that $h$ satisfies a Margulis inequality. The height function in use is essentially the same as in Eskin--Margulis \cite{MR2087794}. However, instead of taking the average over some random walk on $X$, we average over the expanding translates of $B_1^U$, the unit ball in the horospherical subgroup.

\begin{thm}\label{Thm:Margulis_height} For any $0<\delta<\delta_1$ ($\delta_1$ as in Lemma \ref{Lem:LA_lemma}), there exists a height function $h = h_\delta : X \rightarrow (0, \infty)$ such that the following holds: for any $0<c<1$, there exists $t_c >0$ such that for any $t \ge t_c$, there exists absolute constant $B_t >0$ such that
$$A_{2,t}h < ch + B_t .$$
\end{thm}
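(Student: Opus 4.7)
The plan is to build $h$ from the representations $(\rho_k, w_k)_{k=1}^{r}$ of Definition \ref{Def:Height_rep}. For each $k$, I would set
$$\alpha_k(g\Gamma) := \max_{v \in \Gamma w_k}\|\rho_k(g)v\|^{-\delta},\qquad h := M + \sum_{k=1}^{r}\alpha_k$$
for a constant $M \geq 1$ keeping $h$ bounded below away from $0$. The $\Gamma$-rationality of each $P_k$ (in real rank $1$, via Garland--Raghunathan) together with arithmeticity of $\Gamma$ (in real rank at least $2$, via the Margulis arithmeticity theorem) make $\Gamma w_k$ a discrete subset of $W_k\setminus\{0\}$, so the maximum is attained, $\alpha_k$ is finite and right $\Gamma$-invariant, and $h$ descends to a function $X \to (0,\infty)$. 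Log continuity in $g$ is immediate from the continuity of each $\rho_k$.

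Applying Lemma \ref{Lem:LA_lemma} vectorwise, for every $v \in \Gamma w_k \subseteq Gw_k$, every $g \in G$, and every $t \geq t_{\delta,c}$,
$$\frac{1}{m_U(B_2^U)}\int_{B_2^U}\|\rho_k(a_t u g)v\|^{-\delta}\, dm_U(u) < c\,\|\rho_k(g)v\|^{-\delta}.$$
To promote this into a bound on $A_{2,t}\alpha_k$, I would split into two regimes via a threshold $\alpha_0 = \alpha_0(\delta) > 0$. When $\alpha_k(g\Gamma) \leq \alpha_0$, log continuity of $\alpha_k$ over the compact set $a_t B_2^U \subseteq G$ yields $A_{2,t}\alpha_k(g\Gamma) \leq B_t$ for some constant $B_t$. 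When $\alpha_k(g\Gamma) > \alpha_0$, I would argue that the maximizer $v_0 = v_0(g) \in \Gamma w_k$ is \emph{unique}, well separated from the rest of the orbit, and remains the maximizer along $a_t u g$ for all $u \in B_2^U$; granting this, the displayed inequality with $v = v_0$ reads precisely $A_{2,t}\alpha_k(g\Gamma) < c\,\alpha_k(g\Gamma)$. Summing over the finitely many $k$ and absorbing constants into $M$ then yields $A_{2,t}h < ch + B_t$.

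The main obstacle is the uniqueness--stability claim in the second regime: deep in the $k$th cusp only one $v_0 \in \Gamma w_k$ is short under $\rho_k(g)$, and no other orbit element becomes short under any $\rho_k(a_t u g)$ with $u \in B_2^U$. This is a quantitative separation statement about the discrete set $\Gamma w_k$ in $W_k$, and is the point where the non-accumulation of $\Gamma$-orbits on $G/P_k$ is really used; it ultimately reduces to lower-bounding $\|v_0 - v_1\|$ for any two distinct $v_0, v_1 \in \Gamma w_k$, which is where the explicit choice of $W_k = \wedge^{\dim R_u(P_k)} \Lie(R_u(P_k))$ and of $w_k$ in Remark \ref{Wk} is crucial.
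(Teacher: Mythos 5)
Your plan reproduces the paper's setup through the definition of $h$ as a sum over the finitely many $\Gamma$-rational parabolics of quantities of the form $\max_{\gamma\in\Gamma}\|\rho_k(g\gamma)w_k\|^{-\ast}$, and the first regime (value below a threshold, handled by log continuity over $a_tB_2^U$) is exactly as in the paper. The gap is in your second regime, and it is not a gap you can close by the route you sketch.

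Your claim is that there is a threshold $\alpha_0$ so that whenever $\alpha_k(g\Gamma)>\alpha_0$, the maximizer $v_0\in\Gamma w_k$ is unique, well separated from the rest of the orbit, and remains the maximizer over all of $a_tB_2^Ug$. That is false. Deep in the cusp (i.e.\ $\alpha_k$ arbitrarily large) one can still have two distinct $\gamma_0,\gamma_1\in\Gamma$ with $\|\rho_k(g\gamma_0)w_k\|$ and $\|\rho_k(g\gamma_1)w_k\|$ comparable — this happens exactly near walls of the Weyl chamber / near the overlap of Siegel sets, and arbitrarily deep into the cusp. Moreover the perturbation set $a_tB_2^U$ expands different weight spaces of $W_k$ at wildly different rates, so even a maximizer that is unique at $g$ need not stay maximal over the whole window. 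Discreteness of $\Gamma w_k$ does not give the kind of uniform quantitative separation you need. The paper does not attempt to prove uniqueness or stability of the maximizer; instead it accepts that the maximizer may switch within the compact window and handles that case with Lemma~\ref{Lem:bound_h_k_refined} (Eskin--Margulis equation (33)): if two distinct orbit points have comparable $d_k$-values at $g$, then $h_k(g)$ is controlled by $\prod_{j\ne k}h_j^{\lambda_{j,k}}(g)$ — a bound in terms of the \emph{other} cusp heights, not a constant. This is the key estimate your proposal is missing.

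This missing piece also explains why the paper needs the extra combinatorial scaffolding that your construction drops. The individual heights are defined with weights $h_k=\beta_k^{1/q_k}$ where the $q_k$ are chosen so that $\sum_k q_k\omega_k$ lies in the positive Weyl chamber; this is precisely what guarantees $\sum_{j\ne k}\lambda_{j,k}<1$, which makes the cross-terms $\prod_{j\ne k}h_j^{\delta\lambda_{j,k}}$ in Proposition~\ref{Prop:Upper_bound_height} subcritical, and then a Jensen-inequality step in Theorem~\ref{Theorem:Margulis_Inequality_height} together with a small multiplier $\varepsilon$ absorbs them. With your unweighted $\alpha_k$ and an additive constant $M$, there is no mechanism to dominate these cross-terms. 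To repair the proposal you would need to (i) replace the uniqueness--stability claim by the Eskin--Margulis type bound relating $h_k$ to the other $h_j$ when the short vector is not isolated, (ii) introduce weights $q_k$ with $\sum_k q_k\omega_k$ in the positive Weyl chamber so that the resulting self-coupling exponents satisfy $\sum_{j\neq k}\lambda_{j,k}<1$, and (iii) sum with a Jensen / small-$\varepsilon$ argument — at which point you have recovered the paper's proof.
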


Note that Theorem \ref{Thm:Margulis_height} directly implies the (Margulis inequality) hypothesis in Theorem \ref{Thm:Abstract_exp_decay} for $h$. In Lemma \ref{Lem:Log_continuity_height} we shall see that $h$ is log-conitnuous also, and thus we get the following exponential decay property for $h$. 

\begin{cor}\label{Corollary:Exponential_decay_height} For any $0<\delta<\delta_1$, height function $h = h_\delta$ (the same height function as in Theorem \ref{Thm:Margulis_height}) satisfies the following: there exists $t_h > 0$, $C_h>0$ and $B_h>0$ such that for all $t \ge t_h$,
$$A_{t}h \le \frac{C_h}{2^{t/t_h}} \cdot h +B_h.$$
\end{cor}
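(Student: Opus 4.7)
The plan is to derive this corollary as a direct application of the abstract exponential decay result (Theorem \ref{Thm:Abstract_exp_decay}) to the height function $h$. The two hypotheses of that theorem are exactly what we need to verify, and both are already (or will be) in hand: the log-continuity property is the content of Lemma \ref{Lem:Log_continuity_height}, while the Margulis inequality hypothesis is given by Theorem \ref{Thm:Margulis_height}. So the proof reduces to a bookkeeping step in which we choose the constants correctly so that the conclusion takes the particular form stated.

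First I would fix $c = 1/2$ in Theorem \ref{Thm:Margulis_height}. This yields a threshold $t_{1/2} > 0$ and a constant $B_{t_{1/2}} > 0$ such that
$$A_{2,t_{1/2}} h < \tfrac{1}{2} h + B_{t_{1/2}}.$$
This verifies the Margulis inequality hypothesis of Theorem \ref{Thm:Abstract_exp_decay} with parameters $c = 1/2$, $t_c = t_{1/2}$, and $b = B_{t_{1/2}}$. Next I would invoke Lemma \ref{Lem:Log_continuity_height} to verify the log-continuity hypothesis; note that this is precisely the reason the remark preceding the corollary mentions log-continuity.

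Applying Theorem \ref{Thm:Abstract_exp_decay} then produces absolute constants $C > 0$ and $B > 0$ such that for every $t \ge t_{1/2}$,
$$A_t h \;\le\; C \cdot (1/2)^{t/t_{1/2}}\, h \;+\; B \;=\; \frac{C}{2^{t/t_{1/2}}}\, h + B.$$
Setting $t_h := t_{1/2}$, $C_h := C$, and $B_h := B$ gives exactly the stated inequality.

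There is no genuine obstacle here — the work has already been done in Theorems \ref{Thm:Abstract_exp_decay} and \ref{Thm:Margulis_height} together with Lemma \ref{Lem:Log_continuity_height}. The only subtlety worth flagging is the choice $c = 1/2$, which is what forces the base of the exponential to be $2$; any other value of $c \in (0,1)$ would yield an analogous statement with $c^{-t/t_c}$ decay, and the specific choice here is purely a matter of convention for cleanly stating the bound.
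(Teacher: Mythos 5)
Your proposal is correct and takes essentially the same approach as the paper: the paper's proof likewise invokes Theorem~\ref{Thm:Abstract_exp_decay} together with Theorem~\ref{Thm:Margulis_height} (at $c=1/2$) and Lemma~\ref{Lem:Log_continuity_height}, and sets $t_h = t_{1/2}$.
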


\begin{proof} 
The result directly follows from Theorem \ref{Thm:Abstract_exp_decay}, Theorem \ref{Thm:Margulis_height}, and Lemma \ref{Lem:Log_continuity_height}. We note that $t_h$ is equal to $t_{1/2}$, defined as in Theorem \ref{Thm:Margulis_height}.

\end{proof}

\textbf{Construction of the height function $h$.} Let $P_0$ denote a minimal $\Gamma$-rational parabolic subgroup of $G$. Then we have the Langlands decomposition $P_0=M_0A_0N_0$ where $M_0$ is semisimple, $A_0$ is abelian, and $N_0$ is the unipotent radical of $P_0$. If $G$ has real rank greater than 2, then we let $A_0$ to be the fixed maximal $\mathbb Q$-split torus of $G$ in Definition \ref{Def:Height_rep}. Let $\mathfrak{a}$ denote the Lie algebra of $A_0$. We shall identify $\mathfrak{a}$ with its dual via the Killing form. Let $\alpha_1,\alpha_2,\ldots,\alpha_r$ denote the roots which we view as elements of the dual of $\mathfrak{a}$. A \emph{Siegel set} is a set $\mathfrak{S}=K\mathcal{M}\mathcal{A}\mathcal{N}$ where $K$ is the maximal compact subgroup of $G$, $\mathcal{M} \subseteq M_0$ and $\mathcal{N}\subseteq N_0$ are compact, and $\mathcal{A}=\P\{a\in A_0:\alpha_k(\log a)<C \text{ for all }1\le k\le r\}$ for some positive constant $C$.

Note that for appropriate choices of $\mathcal{M}$, $\mathcal{N}$, and $C$, there exists a finite set $J\subseteq G$ such that for every $g\in G$, the intersection $\mathfrak{S}\cap g\Gamma J$ is not empty. See Dani--Margulis \cite{MR1237827} for details.

For $1 \le k \le r$, define $d_k(g):=\|\rho_k(g)w_k\|$ where $\rho_k:G\to GL(W_k)$ and $w_{k}\in W_k$ are defined as in Definition \ref{Def:Height_rep}. By structure theory, there exists absolute constants $C_0$ and $c_1, c_2, ..., c_r$ such that for each $1 \le k \le r$, 

$$ d_k(g) = d_k(a)  \text{\,\,\,and\,\,\,}  |\log(d_k(a))-c_k\omega_k(\log a)|<C_0$$ 

for all $g\in G$ where $g=kman$ is the Langlands decomposition of $g$ with respect to $P_k$ and $\omega_k$ is the co-root corresponding to $\alpha_k$; i.e. $\omega_k(\alpha_k)=1$ and $\omega_k(\alpha_j)=0$ for all $j \neq k$. Let 
$$\beta_k(g)=\max_{\gamma\in \Gamma}\frac{1}{d_k(g\gamma)^{1/c_k}}.$$ 

Also, for $x = g\Gamma \in X$, we shall define $\beta_k(x) := \beta_k(g)$.

\begin{remark}\label{Rmk:str_theory}
There exists an absolute constant $C=C(G/\Gamma)>1$ such that for any $g \in G$ and $g_1 \in \mathfrak{S} \cap g\Gamma J$, 
$$C^{-1}\beta_k(g_1) < \beta_k(g) < C\beta_k(g_1).$$ 
\end{remark}

Lastly, we choose a sequence of positive real numbers $\{q_k \}_{k=1}^{r}$ so that $\sum_kq_k\omega_k$ belongs to the postive Weyl chamber of $\mathfrak{a}$ and let $$h_k(g)=\beta_k(g)^{1/q_k}.$$ 
For later use (see Proposition \ref{Prop:Upper_bound_height}), we shall take $\{q_k \}_{k=1}^{r}$ to be normalized so that $$\min_{1\le k\le r}\{c_kq_k\} =1.$$
Our height function $h$ will be defined to be 
$$h := C_* \sum_{k=1}^{r} h_{k}^{\delta_*}$$ 
for some $\delta_*$ and $C_*$. The condition $\delta_* \ll 1$ will be later verified in the proof of Proposition \ref{Prop:Upper_bound_height} so that $h$ satisfies the Margulis inequality, and the constant $C_* \gg 1$  will be later chosen in Remark \ref{Rmk:lowerbound_h} so that $h$ is bounded away from 1.

\begin{lem}[Log continuity of height function] \label{Lem:Log_continuity_height} For any compact subset $K \subset G$, there exists $\sigma_h$ = $\sigma_h(K) \ge 1$ such that for all $g \in K$ and $x \in X$, 
$$\sigma_h^{-1} \cdot h(x) \le h(gx) \le \sigma_h \cdot h(x).$$
Moreover, $\sigma_h$ can be chosen to be a constant only depending on a compact set $K$ and $\dim(G)$, each of which will be independent of lattice $\Gamma$.
\end{lem}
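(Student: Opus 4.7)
\textbf{Proof plan for Lemma \ref{Lem:Log_continuity_height}.}

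The plan is to push log-continuity down through the layered definition of $h$: from the representations $\rho_k$ to the functions $d_k$, then to $\beta_k$, then to $h_k$, and finally to $h$. Fix $g_1\in K$ and $x=g\Gamma\in X$. For each $1\le k\le r$, since $d_k(g) = \|\rho_k(g)w_k\|$ and $\rho_k$ is a group homomorphism, one has
$$\|\rho_k(g_1^{-1})\|^{-1}\cdot d_k(g\gamma)\;\le\;d_k(g_1 g\gamma)\;\le\;\|\rho_k(g_1)\|\cdot d_k(g\gamma)$$
for every $\gamma\in\Gamma$. Setting $M_k=M_k(g_1):=\max\bigl(\|\rho_k(g_1)\|,\|\rho_k(g_1^{-1})\|\bigr)$ and raising to the $-1/c_k$ power flips the inequalities, so taking the max over $\gamma\in\Gamma$ yields
$$M_k^{-1/c_k}\,\beta_k(g)\;\le\;\beta_k(g_1 g)\;\le\;M_k^{1/c_k}\,\beta_k(g).$$
Raising to the $1/q_k$ power gives the same type of estimate for $h_k$ with exponent $1/(c_kq_k)$, and raising to the $\delta_*$ power gives an estimate for $h_k^{\delta_*}$ with exponent $\delta_*/(c_kq_k)\le \delta_*$ (since we normalized $\min_k c_kq_k = 1$). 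Summing over $k$ and multiplying by $C_*$ (both sides), one obtains $\sigma_h^{-1}h(x)\le h(g_1x)\le \sigma_h h(x)$ with
$$\sigma_h \;=\; \max_{1\le k\le r}\,\sup_{g_1\in K}\,M_k(g_1)^{\delta_*/(c_kq_k)}.$$

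For the \emph{moreover} claim, the job is to check that $\sigma_h$ can be bounded in terms of $K$ and $\dim(G)$ only. The key input is Remark \ref{Wk}: each $\rho_k$ is chosen as the $\dim(R_u(P_k))$-th exterior power of $\mathrm{Ad}$, so
$$\|\rho_k(g_1)\|\;\le\;\|\mathrm{Ad}(g_1)\|^{\dim(R_u(P_k))}\;\le\;\|\mathrm{Ad}(g_1)\|^{\dim(G)},$$
and likewise for $g_1^{-1}$. Hence $M_k(g_1)$ is controlled by $\sup_{g_1\in K}\|\mathrm{Ad}(g_1^{\pm 1})\|^{\dim(G)}$, a quantity depending only on $K$ and $\dim(G)$. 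The exponents $c_k$ arise from the structure theory of $G$ relative to the fixed $\mathbb{Q}$-split torus and the $q_k$ are normalized so that $\min_k c_kq_k=1$, so both are determined by $G$; combined with the uniform bound on the number $r$ of parabolics coming from Lemma \ref{finitely_many_intermediate_subgroups}-style finiteness for standard parabolics of $G$, one concludes $\sigma_h$ depends only on $K$ and $\dim(G)$.

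The main obstacle is the uniformity claim, not the basic estimate. One must verify that every constant that could have secretly depended on the lattice $\Gamma$ (the number and choice of $\rho_k$, the exponents $c_k$ and $q_k$, and the normalizing constants $C_*,\delta_*$) has a bound that is intrinsic to $G$. This is exactly the purpose of the explicit choice of $\rho_k$ in Remark \ref{Wk}: it replaces potentially $\Gamma$-dependent representations by canonical exterior powers of the adjoint representation, whose operator norms are governed by $\|\mathrm{Ad}(g_1)\|$ and the dimension alone.
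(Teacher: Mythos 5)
Your proof is correct and follows the same route as the paper: log-continuity of $d_k$ is pushed through $\beta_k$, then $h_k$, then $h$, with Remark~\ref{Wk} and the normalization $\min_k c_kq_k=1$ securing the $\dim(G)$-only dependence. A small stylistic improvement over the paper: by stating the inequality for each individual $\gamma\in\Gamma$ and only then taking the supremum, you avoid the paper's case split (whether the same or a different $\gamma$ attains the max in $\beta_k(g'g)$ vs.\ $\beta_k(g)$); the two are equivalent, but your version is cleaner.

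One place where the wording is shakier than it needs to be: you assert that ``the exponents $c_k$ arise from the structure theory of $G$ relative to the fixed $\mathbb{Q}$-split torus and the $q_k$ are normalized so that $\min_k c_kq_k=1$, so both are determined by $G$.'' The maximal $\mathbb{Q}$-split torus and the rational parabolics are part of the $\mathbb{Q}$-structure attached to $\Gamma$, so this claim is not obviously $\Gamma$-independent and you should not rely on it. Fortunately you do not need to: since $\rho_k(g_1)\rho_k(g_1^{-1})=\mathrm{Id}$ forces $M_k(g_1)\ge 1$, and you have already observed $\delta_*/(c_kq_k)\le\delta_*$, the formula you derive gives
$$\sigma_h=\max_k\sup_{g_1\in K}M_k(g_1)^{\delta_*/(c_kq_k)}\;\le\;\sup_{g_1\in K}\max\bigl(\|\mathrm{Ad}(g_1)\|,\|\mathrm{Ad}(g_1^{-1})\|\bigr)^{\dim(G)\,\delta_*},$$
which is manifestly independent of $c_k$, $q_k$, $r$, and $\Gamma$. (This is exactly how the paper closes the argument: it writes $\sigma_h=\sigma^{\delta_*}$ after the $c_kq_k$ factors cancel against the normalization, so the exponents never need to be claimed intrinsic to $G$.) You should also note explicitly, as the paper does, that $\delta_*=\tfrac12\delta_1$ depends only on $\dim(G)$ by Lemma~\ref{Lem:LA_lemma}; you list $\delta_*$ as a constant to control but the subsequent sentences never return to it.
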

\begin{proof}
For each $1\le k \le r$, the map $d_k = \|\rho_k( \cdot )w_k\|$ is log continuous. Thus, there exists $\sigma_k = \sigma_k(K)$ such that for all $g' \in K$ and $g \in G$, 
$$\sigma_k^{-1} \cdot d_k(g) \le d_k(g'g) \le \sigma_k \cdot d_k(g).$$

Note that $\sigma_k$ only depends on compact set $K$ and $\dim(W_k).$ By Remark \ref{Wk}, $ \dim(W_k).\le \dim(G)^2)$ so that $\sigma:= \max_{1\le k \le r} \{\sigma_k\}$ is a constant only depending on $K$ and $\dim(G)$. 

Now for each $1\le k \le r$, let $\sigma_k' := \sigma^{1/c_k}$, where $c_k$ are the constants used to define $\beta_k$. Then, for all $g' \in K$ and $g \in G$,
$$(\sigma_k')^{-1} \cdot \beta_k(g) \le \beta_k(g'g) \le \sigma_k' \cdot \beta_k(g).$$

If the maximum in $\beta_k(g'g)$ is achieved by the same $\gamma \in \Gamma$ as in $\beta_k(g)$, then the result directly follows from log continuity of $d_k$ and definition of $\sigma_k'$. Suppose that the maximum is achieved by different choice of $\gamma$;
$$\beta_k(g) = \frac{1}{d_k(g\gamma_0)^{1/c_k}} \ \text{ and }\ \beta_k(g'g) = \frac{1}{d_k(g'g\gamma_1)^{1/c_k}}$$
for some $\gamma_0 \neq \gamma_1 \in \Gamma$. Then, 
$$\beta_k(g'g) > \frac{1}{d_k(g'g\gamma_0)^{1/c_k}} = d_k(g'g\gamma_0)^{-1/c_k} \ge (\sigma \cdot d_k(g\gamma_0))^{-1/c_k} = (\sigma_k')^{-1} \cdot \beta_k(g)$$
and
$$\beta_k(g'g) = d_k(g'g\gamma_1)^{-1/c_k} \le (\sigma^{-1} \cdot d_k(g\gamma_1))^{-1/c_k} <  \sigma_k' \cdot d_k(g\gamma_0))^{-1/c_k} = \sigma_k' \cdot \beta_k(g).$$

Lastly, by definition $$h := C_* \sum_{k=1}^{r} h_{k}^{\delta_*} = C_* \sum_{k=1}^{r} \beta_{k}^{\delta_*/q_k},$$ 
so it follows that the height function $h$ has log continuity  with constant 
$$\sigma_h := \max_{1\le k\le r}\{(\sigma_k')^{\delta_*/q_k}\} = \sigma^{\max_{1\le k\le r}\{\delta_*/c_kq_k\}} = \sigma^{\delta_*/{\min_{1\le k\le r}\{c_kq_k\}}} = \sigma^{\delta_*}.$$
(Here we are using the fact that the $\{q_k \}_{k=1}^{r}$ are normalized to satisfy $\min_{1\le k\le r}\{c_kq_k\} =1$.) Since $\delta_* = \frac{1}{2}\delta_1$ is a constant only dependent on $\dim(G)$ (see Lemma \ref{Lem:LA_lemma}), $\sigma_h$ is only dependent on compact set $K$ and $\dim(G)$.

\end{proof}


The following lemma is a direct result of equation (33) from Eskin--Margulis \cite{MR2087794}.

\begin{lem}\label{Lem:bound_h_k}
For any constant $C>1$, there exists an absolute constant $D_C = D_C(G/\Gamma)>1$ such that the following holds:
if for some $1 \le k \le r$ and $g \in G$ there exists $g_1 \in \mathfrak{S} \cap g\Gamma J$ and $g_2 (\neq g_1) \in g\Gamma J$ such that $d_k(g_2) < Cd_k(g_1)$, then

$$h_k(g) \le D_C \prod_{j\ne k}h_j ^{ \lambda_{j,k}}(g)$$
where $\lambda_{j,k} = \frac{q_j| < \alpha_j, \alpha_k > |}{q_k<\alpha_k,\alpha_k>}$.
\end{lem}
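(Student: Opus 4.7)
The plan is to exploit the Langlands decomposition to reduce the inequality to a linear estimate in Cartan coordinates on $\mathfrak a = \Lie(A_0)$, and then to use the weight-space structure of $\rho_k$ together with the existence of a second nearby lattice element to force a bound on $\omega_k(\log a)$ by the other $\omega_j(\log a)$'s.

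By Remark~\ref{Rmk:str_theory} and the log-continuity of each $h_j$, I may replace $g$ with its Siegel representative $g_1 \in \mathfrak{S} \cap g\Gamma J$; this distorts each $h_j$ only by a bounded multiplicative factor that can be absorbed into $D_C$. Writing the Langlands decomposition $g_1 = k_0 m_0 a n_0$ with $a \in \mathcal A \subset A_0$, the structure-theoretic estimate $|\log d_j(g_1) - c_j\omega_j(\log a)| < C_0$ gives $h_j(g_1) \asymp \exp(-\omega_j(\log a)/q_j)$ (up to constants depending on $G/\Gamma$), so the desired multiplicative inequality on the $h_j$'s is, after taking logarithms, equivalent to
$$\omega_k(\log a)/q_k \le \sum_{j\ne k}\lambda_{j,k}\,\omega_j(\log a)/q_j + O_C(1).$$

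The next step is to translate the hypothesis $d_k(g_2) < Cd_k(g_1)$ into a bound on $(\chi - \mu)(\log a)$, where $\chi$ is the highest weight of $\rho_k$ (with $w_k \in V_\chi$) and $\mu < \chi$ is an appearing weight. Writing $g_2 = g_1 \eta$ with $\eta \in j_1^{-1}\Gamma j_2 \setminus \{e\}$, the hypothesis becomes $\|\rho_k(g_1)\rho_k(\eta)w_k\| \le C\|\rho_k(g_1)w_k\|$. Decomposing $\rho_k(\eta)w_k = \sum_\mu u_\mu$ into $A_0$-weight components, the diagonal action of $a$ scales $u_\mu$ by $\exp(\mu(\log a))$, while the $k_0,m_0,n_0$ pieces contribute bounded (respectively polynomial in $\log a$) factors that are dominated by the exponential scaling. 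Since $\rho_k(\eta)w_k$ lies in a discrete $\rho_k(j_1^{-1}\Gamma j_2)$-orbit, Mahler compactness provides a uniform positive lower bound on $\|u_\mu\|$ for some $\mu < \chi$ (or else $\eta$ stabilizes $\mathbb R w_k$, a case handled by the discrete unipotent radical of $P_k \cap j_1^{-1}\Gamma j_2$). Comparing the two sides then yields $(\chi - \mu)(\log a) \le O_C(1)$.

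Finally, expanding $\chi - \mu = \sum_\ell n_\ell \alpha_\ell$ as a non-negative integer combination of simple roots with $n_k > 0$ (since moving off $\mathbb R w_k$ requires subtracting at least one $\alpha_k$, by the $P_k$-highest-weight property of $w_k$), and using the Killing-form duality between the $\alpha_j$ and the $\omega_j$ on $\mathfrak a$, this rearranges into a bound on $\omega_k(\log a)$ by a linear combination of the $\omega_j(\log a)$'s with precisely the coefficients $\lambda_{j,k} = q_j|\langle\alpha_j,\alpha_k\rangle|/(q_k\langle\alpha_k,\alpha_k\rangle)$. Exponentiation yields the desired inequality. The main obstacle will be the Mahler step --- establishing a uniform positive lower bound on the non-highest-weight components of $\rho_k(\eta)w_k$ as $\eta$ varies over the discrete set $j_1^{-1}\Gamma j_2 \setminus\{e\}$, and handling the stabilizer case $\eta \in P_k$ separately via the unipotent radical of $P_k$. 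This is essentially the content of equation~(33) in \cite{MR2087794}.
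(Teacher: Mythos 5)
The paper offers no proof of this lemma beyond citing equation (33) of Eskin--Margulis \cite{MR2087794}, so your attempt to reconstruct the underlying argument goes beyond what the paper itself supplies, and the skeleton you propose --- pass to a Siegel representative, linearize in Cartan coordinates via the structure-theory estimate, use the second nearby lattice element to bound a weight difference, expand $\chi-\mu$ in simple roots, and dualize via the Killing form --- is indeed the right shape. However, your logarithmic reformulation has the inequality in the wrong direction, and this reverses the roles of the trivial and non-trivial bounds. With $\log h_j(g_1) \asymp -\omega_j(\log a)/q_j$, the target inequality $h_k \le D_C\prod_{j\ne k}h_j^{\lambda_{j,k}}$ becomes
$$\omega_k(\log a)/q_k \;\ge\; \sum_{j\ne k}\lambda_{j,k}\,\omega_j(\log a)/q_j \;-\; O_C(1),$$
not the $\le$-inequality you state. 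The $\le$-version is automatic from the Siegel set alone: since $\alpha_k(\log a) < C$ on $\mathcal A$ and $\alpha_k(\log a) = \omega_k(\log a)\langle\alpha_k,\alpha_k\rangle + \sum_{j\ne k}\omega_j(\log a)\langle\alpha_j,\alpha_k\rangle$ with $\langle\alpha_j,\alpha_k\rangle\le 0$ for $j\ne k$, one gets that direction with no input from $g_2$. Likewise $(\chi-\mu)(\log a)\le O_C(1)$ is the trivial direction (a nonnegative combination of simple roots evaluated in the Siegel chamber); the genuine consequence of $d_k(g_2)<Cd_k(g_1)$ is the lower bound $(\chi-\mu)(\log a)\ge -O_C(1)$, equivalently $(\mu-\chi)(\log a)\le O_C(1)$. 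If you carry signs consistently the argument does close: $n_k>0$ in $\chi-\mu=\sum_\ell n_\ell\alpha_\ell$ together with $\alpha_\ell(\log a)<C$ for $\ell\ne k$ extracts $\alpha_k(\log a)\ge -O_C(1)$, and the Killing-form identity above then produces precisely the coefficients $\lambda_{j,k}$; but as written a literal execution would establish only the trivial inequality.

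The other substantive issue is the one you identify yourself as the ``main obstacle'': the uniform positive lower bound on the non-top-weight components $u_\mu$ of $\rho_k(\eta)w_k$ as $\eta$ ranges over $j_1^{-1}\Gamma j_2\setminus P_k$, together with the separate treatment of $\eta\in P_k$ via the discreteness of $\Gamma\cap R_u(P_k)$. This is the real content of equation (33) of \cite{MR2087794}; ``Mahler compactness'' alone does not deliver it, since a discrete orbit of vectors does not automatically have a uniform lower bound on one fixed weight-component. One needs either the arithmeticity of $\Gamma$ (higher rank), so that the relevant matrix entries lie in a discrete subring of $\mathbb R$, or the Garland--Raghunathan structure in rank one, to get a quantitative separation. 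Until that step is made precise the proposal is an outline rather than a proof.
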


In view of Remark \ref{Rmk:str_theory}, we can rewrite the above Lemma \ref{Lem:bound_h_k} as follows. 

\begin{lem}[Upper bound for $h_k$]\label{Lem:bound_h_k_refined} For any constant $C>1$, there exists an absolute constant $D_C' = D_C'(G/\Gamma)>1$ such that the following holds. If $\beta_k(g) = \frac{1}{d_k(g\gamma_0)^{1/c_k}}$ for some $\gamma_0 \in \Gamma$ and there exists $\gamma_1 (\neq \gamma_0) \in \Gamma$ such that $d_k(g\gamma_1)<Cd_k(g\gamma_0)$, then 

$$h_k(g) \le D_C'\prod_{j\ne k}h_j ^{ \lambda_{j,k}}(g).$$
\end{lem}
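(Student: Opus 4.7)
My plan is to reduce Lemma~\ref{Lem:bound_h_k_refined} to Lemma~\ref{Lem:bound_h_k} by using Remark~\ref{Rmk:str_theory} to translate the hypothesis on $\Gamma$-translates $g\gamma_0, g\gamma_1$ into the hypothesis of Lemma~\ref{Lem:bound_h_k} on representatives in $g\Gamma J$, losing only a factor depending on $G/\Gamma$.

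First I would produce a pair of Siegel representatives by applying Siegel reduction to each of $g\gamma_0, g\gamma_1$: this yields $g_1\in\mathfrak{S}\cap g\gamma_0\Gamma J$ and $g_2\in\mathfrak{S}\cap g\gamma_1\Gamma J$, and since $\gamma_0,\gamma_1\in\Gamma$, both lie in $\mathfrak{S}\cap g\Gamma J$. To compare $d_k$-values I would chain together (i) the $\Gamma$-invariance of $\beta_k$, which gives $\beta_k(g\gamma_i)=\beta_k(g)$; (ii) Remark~\ref{Rmk:str_theory}, which gives $\beta_k(g_i)\asymp\beta_k(g)$ with implied constants depending only on $G/\Gamma$; and (iii) the reduction-theoretic fact that for $g'\in\mathfrak{S}$ the $\Gamma$-maximum $\beta_k(g')=\max_{\gamma\in\Gamma}d_k(g'\gamma)^{-1/c_k}$ is attained at $\gamma=e$ up to a bounded factor, so that $\beta_k(g')\asymp d_k(g')^{-1/c_k}$. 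Combining (i)--(iii) yields $d_k(g_i)\asymp d_k(g\gamma_i)$ for $i=0,1$, and the hypothesis $d_k(g\gamma_1)<Cd_k(g\gamma_0)$ then gives $d_k(g_2)<C''d_k(g_1)$ for some $C''=C''(C,G/\Gamma)$. Lemma~\ref{Lem:bound_h_k} applied with this constant then provides
$$h_k(g)\le D_{C''}\prod_{j\ne k}h_j^{\lambda_{j,k}}(g),$$
and we set $D_C':=D_{C''}$.

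The main obstacle is handling the case $g_1=g_2$, where the Siegel reductions of $g\gamma_0$ and $g\gamma_1$ happen to coincide and Lemma~\ref{Lem:bound_h_k} cannot be invoked directly on the Siegel pair. I would address this by replacing $g_2$ with a modified element $g\gamma_1\alpha_0 j_0\in g\Gamma J$, where $g_1=g\gamma_0\alpha_0 j_0$ is the Siegel factorization; this element differs from $g_1$ because $\gamma_0\ne\gamma_1$. The delicate point is controlling $d_k$ of this modified $g_2$, since $\rho_k(\alpha_0 j_0)w_k$ need not be a scalar multiple of $w_k$ and hence $d_k(g_2)$ does not directly equal a rescaling of $d_k(g\gamma_1)$. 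Carrying out this $d_k$-comparison, together with verifying the Siegel-set fact (iii) explicitly from the Langlands decomposition of elements of $\mathfrak{S}$, is the main technical burden of the argument.
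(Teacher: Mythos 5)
Your overall strategy is exactly the one the paper intends: the paper offers no proof beyond the one-line remark that Lemma~\ref{Lem:bound_h_k_refined} is a ``rewrite'' of Lemma~\ref{Lem:bound_h_k} in view of Remark~\ref{Rmk:str_theory}, and your reduction through Siegel representatives is the natural way to make that precise. You also correctly isolate the crux of the matter, namely producing a genuinely distinct pair $g_1\ne g_2$ in $g\Gamma J$, which the paper glosses over entirely.

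That said, the proposal does not close the argument, and the unresolved point you flag is a genuine gap rather than routine bookkeeping. Take your Siegel representative $g_1 = g\gamma_0\alpha_0 j_0 \in\mathfrak{S}$, and write $v := \rho_k(\alpha_0 j_0)w_k$. The needed estimate for your modified choice $g_2 = g\gamma_1\alpha_0 j_0$ is $\|\rho_k(g\gamma_1)v\|\ll\|\rho_k(g\gamma_0)v\|$. You do control the right-hand side: by your step (iii) and Remark~\ref{Rmk:str_theory}, $\|\rho_k(g\gamma_0)v\| = d_k(g_1)\asymp d_k(g\gamma_0) = \|\rho_k(g\gamma_0)w_k\|$. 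But the hypothesis $\|\rho_k(g\gamma_1)w_k\| < C\|\rho_k(g\gamma_0)w_k\|$ gives no handle on $\|\rho_k(g\gamma_1)v\|$: the element $\alpha_0\in\Gamma$ is \emph{not} bounded independently of $g$ (it is precisely the correction driving $g\gamma_0$ into the Siegel set and can be arbitrarily far from the identity), so $v$ is a generic vector whose $\rho_k(g\gamma_1)$-norm bears no uniform relation to $\|\rho_k(g\gamma_1)w_k\|$. Equivalently, $\rho_k(\gamma_0^{-1}\gamma_1)$ may rotate $v$ into a direction that $\rho_k(g\gamma_0)$ expands dramatically while contracting the $w_k$-line, and this is exactly the regime of interest (large $\beta_k(g)$). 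So the $d_k$-comparison does not transfer from $w_k$ to $v$ and a different idea is needed for this case.

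A secondary slip is worth noting because it should already have raised a red flag. Your chain (i)--(iii) does not actually give $d_k(g_2)\asymp d_k(g\gamma_1)$: step (i) gives $\beta_k(g\gamma_1)=\beta_k(g)$, and $\beta_k(g)=d_k(g\gamma_0)^{-1/c_k}$, not $d_k(g\gamma_1)^{-1/c_k}$, so what you actually deduce is $d_k(g_2)\asymp d_k(g\gamma_0)\asymp d_k(g_1)$. That is still the inequality you need for Lemma~\ref{Lem:bound_h_k}, but it means the hypothesis $d_k(g\gamma_1)<Cd_k(g\gamma_0)$ is being used only to guarantee $\gamma_0\ne\gamma_1$ and never for a norm comparison. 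Since not every $g$ satisfies the conclusion of the lemma, this should signal that the substantive case is exactly $g_1=g_2$, and that either the role of the constant $C$ or the structure of $J$ must enter the hard estimate somewhere. I would suggest examining what equation (33) of Eskin--Margulis \cite{MR2087794} actually asserts (and whether its hypothesis is stated in terms of distinct vectors in the discrete orbit $\rho_k(\Gamma J)w_k$, for which a further subtlety arises since $\gamma_0\ne\gamma_1$ need not imply $\rho_k(\gamma_0)w_k\ne\rho_k(\gamma_1)w_k$), or alternatively whether the reduction can be arranged so that $J$ plays no role, before declaring the lemma a rewrite.
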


Now, we replace Condition A of Eskin--Margulis \cite{MR2087794} with Lemma \ref{Lem:LA_lemma} and prove an Margulis inequality for our averaging operator $A_{t}$.

\begin{prop}[Upper bound for $A_{t}h_k^\delta$]\label{Prop:Upper_bound_height}
 For any  $0<\delta<\delta_1$ and $0<c<1$, for all $t \ge t_{\delta,c}$ (where $t_{\delta, c}$ is as in Lemma \ref{Lem:LA_lemma}), there exists an absolute constant $D_t >0$ (depending only on $t$) such that  for any $1\le k\le r$,
$$(A_{2,t}h_k^\delta)(g) := \frac{1}{m_U(B_2^U)}\int_{B_2 ^{U}}h_k^{\delta}(a_t u g)\,dm_U(u) \le ch_k^\delta(g) + D_t \prod_{j\ne k}h_j ^{\delta \lambda_{j,k}}(g)$$
for any $g \in G$.
\end{prop}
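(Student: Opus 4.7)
The plan is to adapt the Margulis-inequality argument for height functions of Eskin--Margulis \cite{MR2087794} to the horospherical average $A_{2,t}$, substituting our Linear Algebra Lemma (Lemma \ref{Lem:LA_lemma}) for their Condition A. Since the $\{q_j\}$ are normalized with $\min_{1\le j\le r} c_jq_j = 1$, the exponent $\delta' := \delta/(c_kq_k)$ satisfies $\delta' \le \delta < \delta_1$, so Lemma \ref{Lem:LA_lemma} is applicable at $\delta'$. Unpacking the definition gives
\[
h_k^\delta(a_tug) = \max_{\gamma\in\Gamma}\frac{1}{\|\rho_k(a_tu)\rho_k(g\gamma)w_k\|^{\delta'}}.
\]

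Fix $g$ and choose $\gamma_0 \in \Gamma$ realizing $h_k(g) = d_k(g\gamma_0)^{-1/c_k}$. Let $M = M(t) < \infty$ be a uniform upper bound on $\|\rho_k(a_tu)\|_{\mathrm{op}}\cdot\|\rho_k(a_tu)^{-1}\|_{\mathrm{op}}$ over $u \in B_2^U$, and set $C := 2M^2$. I then split into two cases depending on whether $\gamma_0$ has a near-competitor.

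In \emph{Case 1}, there exists $\gamma_1 \neq \gamma_0$ with $d_k(g\gamma_1) \le C\,d_k(g\gamma_0)$. Lemma \ref{Lem:bound_h_k_refined} then yields $h_k(g) \le D_C'\prod_{j\ne k} h_j^{\lambda_{j,k}}(g)$. Since $\{a_tu : u \in B_2^U\}$ sits in a compact subset of $G$ depending only on $t$, a log-continuity argument parallel to Lemma \ref{Lem:Log_continuity_height} (applied to $h_k^\delta$) produces a constant $\sigma_t$ with $h_k^\delta(a_tug) \le \sigma_t\,h_k^\delta(g)$ for all $u \in B_2^U$. Integrating and combining gives
\[
A_{2,t}h_k^\delta(g) \le \sigma_t(D_C')^\delta \prod_{j\ne k} h_j^{\delta\lambda_{j,k}}(g),
\]
which is absorbed into the $D_t$ term.

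In \emph{Case 2}, $d_k(g\gamma) > C\,d_k(g\gamma_0)$ for every $\gamma \ne \gamma_0$. For any $u \in B_2^U$ and $\gamma \ne \gamma_0$, the operator-norm bounds give
\[
\frac{d_k(a_tug\gamma)}{d_k(a_tug\gamma_0)} \ge \frac{1}{M^2}\cdot\frac{d_k(g\gamma)}{d_k(g\gamma_0)} > \frac{C}{M^2} = 2,
\]
so $\gamma_0$ still minimizes $d_k(a_tug\,\cdot\,)$ and therefore maximizes in $h_k(a_tug)$. Consequently $h_k^\delta(a_tug) = \|\rho_k(a_tu)v_0\|^{-\delta'}$ with $v_0 := \rho_k(g\gamma_0)w_k \in Gw_k$, and Lemma \ref{Lem:LA_lemma} applied with the same $c$ directly yields $A_{2,t}h_k^\delta(g) < c\,h_k^\delta(g)$. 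The principal obstacle is exactly the preservation of the argmin after averaging; it forces the threshold $C$, and hence the Case 1 constant $D_C'$, to depend on $t$ through $\|\rho_k(a_tu)\|_{\mathrm{op}}$, but this dependence is permitted by the $D_t$ in the statement.
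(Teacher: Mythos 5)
Your argument is correct and follows the same route as the paper: split according to whether the argmax $\gamma_0$ survives the $a_tu$-translates (you phrase this contrapositively via the absence of a near-competitor, the paper via persistence of the maximizer, but the dichotomy is the same), then apply Lemma~\ref{Lem:LA_lemma} directly in one case and Lemma~\ref{Lem:bound_h_k_refined} plus log-continuity of $h_k$ in the other. Only a minor bookkeeping slip: with $M$ bounding the \emph{product} $\|\rho_k(a_tu)\|_{\mathrm{op}}\|\rho_k(a_tu)^{-1}\|_{\mathrm{op}}$, the ratio estimate gives a factor $1/M$ rather than $1/M^2$, so $C=2M$ already suffices; this only makes your threshold larger than needed and does not affect the conclusion.
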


\begin{proof}
If for every $a_tug$ (varying $u$ over $B_2^{U}$) the maximum in
$\beta_k$ is achieved by the exact same $\gamma \in \Gamma$ as in $\beta_k(g)$, then we get $(A_{2,t}h_k^\delta)(g) \le ch_k^\delta(g)$ directly from Lemma \ref{Lem:LA_lemma}, since 
$$h_k(g) = \frac{1}{d_k(g)^{(\delta/c_k q_k)}} = \frac{1}{\|\rho_k(g)w_k\|^{(\delta/c_k q_k)}} $$

and $\delta/c_k q_k < \delta_1/c_k q_k \le \delta_1$. (Here we are using the fact that the $\{q_k \}_{k=1}^{r}$ are normalized to satisfy $\min_{1\le k\le r}\{c_kq_k\} =1$.)

Suppose that the maximum in $\beta_k$ is achieved by a different $\gamma$ for some $u \in B_2 ^{U}$;
$$\beta_k(g) = \frac{1}{d_k(g\gamma_0)^{1/c_k}} \ \text{ and }\ \beta_k(g'g) = \frac{1}{d_k(g'g\gamma_1)^{1/c_k}}$$

for some $g' \in a_t B_2 ^{U}$ and $\gamma_0 \neq \gamma_1 \in \Gamma$. By definition of $\beta_k$, compactness of $a_t B_2 ^{U}$, and log continuity of $d_k$, we have
$$d_k(g\gamma_1) < Cd_k(g'g\gamma_1) < Cd_k(g'g\gamma_0) < C^2d_k(g\gamma_0),$$

where $C = C_{t,k}=\sigma_{d_k}(a_t B_2 ^{U})$. By Lemma \ref{Lem:bound_h_k_refined} and by log continuity of $h_k$,
\begin{align*}
\frac{1}{m_U(B_2^U)}\int_{B_2 ^{U}}h_k^{\delta}(a_t ug)\,dm_U(u)
&\le \frac{1}{m_U(B_2^U)}\int_{B_2 ^{U}} C' h_k^{\delta}(g)\,dm_U(u) = C' h_k^{\delta}(g)\\
&\le C'D_{C^2}'\prod_{j\ne k}h_j ^{\delta \lambda_{j,k}}(g)
\end{align*}

where $C'= C_{t,k}'= \sigma_{h_k}(a_t B_1^{U})$, and $D_{C^2}'$ is defined as in Lemma \ref{Lem:bound_h_k_refined}, for our choice of $C=C_{t,k}$. We note that $C=C_{t,k}$, $C'=C_{t,k}'$, and $D_{C^2}' = D_{C_{t,k}^2}'$ are all absolute constants, only depending on $t$ and $k$. Thus, by taking $D_t:=\max_{1\le k\le r} \{C_{t,k}'D_{C_{t,k}^2}'\}$, we get the desired result. 
\end{proof}

\begin{thm}\label{Theorem:Margulis_Inequality_height} 
For any triple of $0<\delta<\delta_1$, $0<c<1$, and $t \ge t_{\delta,c/2}$ ($t_{\delta, c/2}$ as in Lemma \ref{Lem:LA_lemma}), there exists $0<\varepsilon = \varepsilon_{t, c} \ll 1$ such that the height function $ h = h_{t, c} := \sum_{k=1}^{r}(\varepsilon h_k)^\delta$
satisfies$$A_{2,t} h \le\, ch + 1 .$$
\end{thm}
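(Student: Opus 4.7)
The plan is to apply Proposition \ref{Prop:Upper_bound_height} coordinate-wise with contraction rate $c/2$, sum over $k$, and then absorb the mixed cross-terms $\prod_{j\neq k} h_j^{\delta\lambda_{j,k}}$ into $(c/2)\,h + 1$ by choosing $\varepsilon$ small. The choice $t \ge t_{\delta,c/2}$ in the hypothesis is exactly what lets us feed $c/2$ (rather than $c$) into Proposition \ref{Prop:Upper_bound_height}, which gives for every $1 \le k \le r$,
$$A_{2,t} h_k^{\delta}(g) \;\le\; \tfrac{c}{2}\, h_k^{\delta}(g) \;+\; D_t \prod_{j\neq k} h_j^{\delta\lambda_{j,k}}(g).$$
Summing and multiplying by $\varepsilon^{\delta}$ (recalling $h = \varepsilon^{\delta}\sum_k h_k^{\delta}$) yields
$$A_{2,t} h \;\le\; \tfrac{c}{2}\, h \;+\; \varepsilon^{\delta} D_t \sum_{k=1}^{r} \prod_{j\neq k} h_j^{\delta\lambda_{j,k}}.$$

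The key structural input, inherited from the Eskin--Margulis construction \cite{MR2087794} (and precisely the reason for requiring $\sum_k q_k\omega_k$ to lie in the positive Weyl chamber), is the strict inequality
$$\Lambda_k := \sum_{j\neq k} \lambda_{j,k} < 1 \qquad \text{for every } k.$$
Setting $\tilde{\lambda}_{j,k} := \lambda_{j,k}/\Lambda_k$, so that $\sum_{j\neq k}\tilde{\lambda}_{j,k} = 1$, the weighted AM--GM inequality gives
$$\prod_{j\neq k} h_j^{\delta\lambda_{j,k}} \;=\; \prod_{j\neq k} \bigl(h_j^{\delta\Lambda_k}\bigr)^{\tilde{\lambda}_{j,k}} \;\le\; \sum_{j\neq k} \tilde{\lambda}_{j,k}\, h_j^{\delta\Lambda_k}.$$
Since $\Lambda_k < 1$, for any $c'' > 0$ there exists $M = M(c'') > 0$ (independent of $\varepsilon$) with $x^{\delta\Lambda_k} \le c''\, x^{\delta} + M$ for every $x > 0$, just by splitting the cases $x \ge x_0$ and $x \le x_0$ at a threshold $x_0 = x_0(c'')$. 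Plugging this in and summing over $k$,
$$\varepsilon^{\delta} D_t \sum_{k=1}^{r} \prod_{j\neq k} h_j^{\delta\lambda_{j,k}} \;\le\; r D_t c''\, \varepsilon^{\delta} \sum_{j=1}^{r} h_j^{\delta} \;+\; r D_t M \varepsilon^{\delta} \;=\; r D_t c''\, h \;+\; r D_t M \varepsilon^{\delta}.$$

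To conclude, I first fix $c'' := c/(2rD_t)$, which makes the $h$-coefficient in the final bound equal to $c/2 + rD_t c'' = c$; note that $D_t$ and hence $c''$ and $M(c'')$ are now completely determined by $t$ and $c$. I then choose $\varepsilon = \varepsilon_{t,c} > 0$ small enough that $rD_t M(c'')\, \varepsilon^{\delta} \le 1$, which is possible because the exponent $\delta > 0$ is fixed. This gives $A_{2,t} h \le c\,h + 1$ as required. The only delicate ingredient is the strict inequality $\Lambda_k < 1$: without it the cross-term would scale linearly in $h$ with a coefficient no amount of rescaling by $\varepsilon$ could kill; every other step is either weighted AM--GM or scalar bookkeeping.
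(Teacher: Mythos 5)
Your proof is correct. Both you and the paper reduce the task to the key structural fact $\Lambda_k:=\sum_{j\neq k}\lambda_{j,k}<1$, but you handle the cross-term differently. The paper feeds the weights $\lambda_{j,k}$ together with the complementary weight $1-\Lambda_k$ (assigned to the value $1$) directly into Jensen, which packages the product as $\le \sum_{j\neq k}(\varepsilon h_j)^\delta + 1$; the $\varepsilon$-dependence is then carried entirely by the coefficient $\varepsilon_k=\varepsilon^{\delta(1-\Lambda_k)}$, which vanishes as $\varepsilon\to 0$ precisely because $\Lambda_k<1$, so a single choice of $\varepsilon$ simultaneously tames the multiplicative and additive parts. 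You instead normalize to a true convex combination via $\tilde\lambda_{j,k}=\lambda_{j,k}/\Lambda_k$, apply AM--GM to get $\sum_{j\neq k}\tilde\lambda_{j,k}\, h_j^{\delta\Lambda_k}$, and then use the scalar sublinearity bound $x^{\delta\Lambda_k}\le c'' x^\delta + M(c'')$ (valid because $\Lambda_k<1$) to split off a small multiple of $h$ and a constant; here only the additive constant needs the $\varepsilon^\delta$ prefactor to shrink. One small omission: your $M$ depends on $\Lambda_k$, so you should take $M:=\max_k M_k(c'')$ before choosing $\varepsilon$; this costs nothing since $r$ is finite. The paper's version is slightly tighter bookkeeping (no auxiliary $c''$ or threshold $x_0$), while yours makes the mechanism by which $\Lambda_k<1$ enters more explicit by isolating the scalar inequality.
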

\begin{proof}
By Proposition \ref{Prop:Upper_bound_height}, we have
$$A_{2,t}h_k^\delta \le \frac{c}{2} \cdot h_k^\delta + D_t \cdot \prod_{j\ne k}h_j ^{\delta \lambda_{j,k}}.$$
Taking the sum over all $1\le k\le r$ and multiplying $\varepsilon^\delta$ to both sides of the above equation yields,
$$A_{2,t} \left( \sum_{k=1}^r (\varepsilon h_k)^\delta\right) \le \frac{c}{2} \cdot \sum_{k=1}^r (\varepsilon h_k)^\delta + D_t \cdot \sum_{k=1}^r  \varepsilon_k\prod_{j\ne k}(\varepsilon h_j)^{\delta \lambda_{j,k}}$$
where $\varepsilon_k = \varepsilon^{\delta \cdot (1 - \sum_{j\neq k}\lambda_{j,k})}$ for each $1\le k\le r$. Since $\sum_k q_k\omega_k$ belongs to the positive Weyl chamber, we have that $\sum_{j\ne k}\lambda_{j,k} < 1$ for each $1\le k\le r$. Hence, by Jensen's inequality
\begin{align*}
\prod_{j\ne k}(\varepsilon h_j)^{\delta \lambda_{j,k}} & =\exp \left( \sum_{j\ne k} \lambda_{j,k} \cdot \left( \log   (\varepsilon h_j)^{\delta} \right) + \left(1- \sum_{j\ne k} \lambda_{j,k} \right) \cdot 0 \right) \\
&\le  \sum_{j\ne k} \lambda_{j,k} \cdot \exp \left( \log   (\varepsilon h_j)^{\delta} \right) + \left(1- \sum_{j\ne k} \lambda_{j,k} \right) \cdot \exp (0) \\
&= \sum_{j\ne k} \lambda_{j,k} (\varepsilon h_j)^{\delta} + (1 - \sum_{j\neq k}\lambda_{j,k}) \cdot 1 \le \sum_{j\ne k} (\varepsilon h_j)^{\delta}+1
\end{align*}

and thus,
\begin{align*}
A_{2,t} \left(\sum_{k=1}^r (\varepsilon h_k)^\delta\right) &\le \frac{c}{2} \cdot \sum_{k=1}^r (\varepsilon h_k)^\delta + \left(D_t  \sum_{k=1}^r  \varepsilon_k \right) \cdot \left(\sum_{k=1}^r (\varepsilon h_k)^\delta +1\right)\\
&= \left(\frac{c}{2} + D_t \sum_{k=1}^r  \varepsilon_k \right) \cdot \sum_{k=1}^r (\varepsilon h_k)^\delta+\left(D_t  \sum_{k=1}^r  \varepsilon_k \right)
\end{align*}

Since $\varepsilon_k = \varepsilon^{\delta \cdot (1 - \sum_{j\neq k}\lambda_{j,k})}$ and $\sum_{j\ne k}\lambda_{j,k} < 1$ for each $1\le k\le r$, we can choose $\varepsilon = \varepsilon_{\delta,c} \ll 1$ small enough so that $\left(D_t  \sum_{k=1}^{r}  \varepsilon_k \right) < \frac{c}{2} < 1$.
\end{proof}

\begin{proof}[Proof of Theorem \ref{Thm:Margulis_height}] For fixed $0<\delta<\delta_1$, the class of height functions $\{h_{t,c}\}$  with $0<c<1$ and $t \ge t_{\delta, c/2}$ defined as in Theorem \ref{Theorem:Margulis_Inequality_height} are linear. That is, if we let $h = h_{\delta} = \sum_{k=1}^{r} h_k^{\delta}$, then
$$h_{t,c} = \sum_{k=1}^{r} (\varepsilon_{t,c} h_k)^{\delta} = \varepsilon_{t,c}^{\delta} \cdot \sum_{k=1}^{r} h_k^{\delta} = \varepsilon_{t,c}^{\delta} \cdot h.$$
By Theorem \ref{Theorem:Margulis_Inequality_height}, for any pair of $0<c<1$ and $t \ge t_{\delta, c/2}$, we have 
$$(A_{2,t} h_{t,c})(x) \,\le\, ch_{t,c}(x) + 1.$$

Multiply $B_t := 1/\varepsilon_{t,c}^{\delta}$ to both sides of the inequality and we get
$$(A_{2,t} h)(x) \,\le\, ch(x) + B_t.$$
Lastly, take $t_c$ = $t_{\delta,c/2}$ and we are done. 
\end{proof}

\begin{remark}[Lower bound for $h$]\label{Rmk:lowerbound_h} For  computational reasons that will become apparent later, we want our $h$ to be large and bounded away from 1. Since $\{d_k(g):g \in \mathfrak{S}\}$ ($\mathfrak{S}$ is the Siegel set) is bounded away from zero, by Remark \ref{Rmk:str_theory} we have that for each $1\le k\le r$, $h_k$ is bounded away from zero and  therefore, $h_{\delta} = \sum_{k=1}^{r} h_k^{\delta}$ is bounded away from zero. Thus, by multiplying some large $C \gg 1$, we can make our height function $h:=Ch_{\delta}$ to be no less than 2. Note that constant multiple does not effect Margulis inequalities (see proof of Theorem \ref{Thm:Margulis_height}) except that it only makes the additive constant $B_t$ bigger; our newly defined $h=Ch_{\delta}$ also satisfies Theorem \ref{Thm:Margulis_height} and Corollary \ref{Corollary:Exponential_decay_height}. 

For the remainder of the paper, if $\Gamma$ is a non-uniform lattice, then we fix $\delta_* := \frac{1}{2} \delta_1$ and let our height function to be $h=C_*h_{\delta_*} \ge 2$. However, note that the exact value of $\delta_*$ is not important and all arguments in the later sections apply for any choice of $0<\delta_*<\delta_1$.
\end{remark}

\section{Return lemma and Number of Nearby Sheets}\label{Return lemma and Number of Nearby Sheets}

Let $H \subseteq S \subsetneq G$ be an intermediate orbit. For a closed $S$-orbit $Y=Sy$ and point $x \in X$, we shall define a window set $I_Y(x)$ which collects all the sheets of $Y$ that are nearby $x$. Roughly, the idea is to collect sheets of $Y$ within the injective ball $B_{\inj(x)}^G(x)$, but the exact size of our windows will be much smaller, and will be given in terms of the height function $h$. A formal definition of $I_Y(x)$ will be given in Definition \ref{Def:Window_I_Y}.

The aim of this section is to show that $\#I_Y(X)$, the number of nearby sheets, is bounded in terms of volume of $Y$.
\begin{prop}[Number of nearby sheets]\label{Prop:Number_of_Nearby_Sheets}
 There exists a global constant $\constE\label{C:sheets}>0$ such that for any intermediate subgroup $H\subseteq S\subsetneq G$, closed $S$-orbit $Y=Sy$, and $x\in X$, we have
$$\#I_Y(x)<C_1 \vol(Y).$$
\end{prop}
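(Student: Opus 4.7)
The plan is to bound $\#I_Y(x)$ by a volume-packing argument inside $Y$. To each sheet in $I_Y(x)$, represented by a point $g_i y \in Y$, I will associate a small local $S$-piece $N_i \subset Y$ whose volume is bounded below by an absolute positive constant $c_0 = c_0(\dim G)$, and then show that the $N_i$'s are pairwise disjoint as subsets of $Y$. Since $\bigsqcup_i N_i \subset Y$, summing their volumes yields $\#I_Y(x) \cdot c_0 \le \vol(Y)$, so the proposition will hold with $C_1 := 1/c_0$.

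Concretely, I will fix an absolute constant $r_0 = r_0(\dim G) > 0$ (to be chosen small enough below) and set
$$N_i := B_{r_0}^{S} \cdot g_i y \subset Y.$$
The first step is to verify that the orbit map $s \mapsto s g_i y$ from $B_{r_0}^S$ to $N_i$ is injective, which gives $\vol(N_i) \ge m_S(B_{r_0}^S) =: c_0$. This reduces to taking $r_0$ smaller than the injectivity radius of $Y$ at $g_i y$; this can be arranged absolutely using the local isometry constant $\sigma_0$ from the preliminaries, since self-returns of a single $S$-orbit are separated in $S$ by a uniform amount in terms of $\sigma_0$ and the discreteness of the relevant stabilizer. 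The second step is to verify pairwise disjointness: if $N_i \cap N_j \neq \emptyset$ for $i \neq j$, then $g_j y = (s_2^{-1} s_1) g_i y$ with $s_2^{-1} s_1 \in B_{2 r_0}^S$, so $g_i y$ and $g_j y$ would be within $2 r_0$ of each other in the $S$-metric on $Y$. By the forthcoming Definition \ref{Def:Window_I_Y}, distinct sheets in $I_Y(x)$ are transversally separated (in the $V_S$-direction) inside the injective window around $x$, and such transversal separation is preserved under small $S$-translates since $S$ acts tangentially to $Y$; this contradicts the assumed overlap.

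The main obstacle will be making the disjointness step rigorous once Definition \ref{Def:Window_I_Y} is available. One must verify that the absolute choice of $r_0$ is smaller than the minimal $Y$-separation forced between distinct sheet representatives, uniformly in $x \in X$. The subtlety is that the window size is controlled by $h(x)$ and can shrink deep in the cusp, so we must confirm that this shrinking does not allow two distinct sheets to come within $2 r_0$ along $S$. Once this is granted, the uniform lower bound on $\vol(N_i)$ and the volume summation are immediate, completing the proof.
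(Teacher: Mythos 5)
Your packing strategy is essentially the same argument the paper uses in Lemma~\ref{Lem:Sheets_in_terms_of_height_function}, but there it only produces the weaker bound $\#I_Y(x) < C_4 h(x)^{d_S m}\vol(Y)$, and the reason it cannot give a uniform bound is precisely the step you flag as a ``subtlety'' and then declare immediate. The claim that ``self-returns of a single $S$-orbit are separated in $S$ by a uniform amount in terms of $\sigma_0$ and the discreteness of the relevant stabilizer'' is false: the stabilizer of $g_i y$ in $S$ is $S\cap g_i\Gamma g_i^{-1}$, and when $x$ --- and hence $\exp(v)x\in Y$ --- sits deep in a cusp, this intersection can contain elements arbitrarily close to $e$. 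Concretely, the injectivity radius of $Y$ at $\exp(v)x$ is comparable to $\inj(x)$, which is not bounded below by any function of $\dim G$. So there is no absolute $r_0$ for which the orbit map $B_{r_0}^S\to N_i$ is injective, and your lower bound $\vol(N_i)\ge c_0$ fails precisely in the regime the proposition is supposed to handle. This is a genuine gap, not a technicality: it is the whole content of the statement beyond the easy compact-part case.

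The paper's proof fills this gap with a second ingredient that your sketch does not contain: the Return Lemma~\ref{lem:Return_lemma} and the specific choice of window exponent $\kappa\ge 3C_3$ in Definition~\ref{Def:Window_I_Y}. Given $x$ with $h(x)$ large, one flows for time $t_x = C_3\log h(x)$ along $a_t u$ for a suitable $u\in B_1^U$ to return to the compact part $X_{\mathrm{cpt}}$. The expansion of $a_{t_x}u$ on $V_S$ multiplies norms by at most $\sigma_U h(x)^{C_3}$, while the window radius shrinks from $\varepsilon_h h(x)^{-\kappa}$ to $\varepsilon_h Q^{-\kappa}$; the inequality $C_3 - \kappa \le -2\kappa/3$ built into $\kappa\ge 3C_3$ guarantees the image still lands inside $I_Y(a_{t_x}ux)$. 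This makes $v\mapsto a_{t_x}u.v$ an injection $I_Y(x)\hookrightarrow I_Y(a_{t_x}ux)$, and then the packing bound from the compact case (your argument, correctly restricted) applies at the endpoint. Without this transfer your approach cannot be made to close.
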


\subsection{Height function and Radius of Injectivity}
First we compare $h(x)$, the value of the our height function at $x \in X$, with $\inj(x)$, the injectivity radius at point $x$. We note that the following Proposition is as an analog of Lemma 6.3. of Benoist-Quint \cite{MR2831114}.

\begin{prop}\label{Proposition:comparing_height_and_inj_radius}
If $\Gamma$ is a non-uniform lattice of $G$, then there is absolute constants $\constE\label{C:comparing_height_and_inj_radius}>0$ and $m>0$ such that for  all $x\in X$,
$$\inj(x)^{-1}\le \ref{C:comparing_height_and_inj_radius} h(x)^{m}.$$
\end{prop}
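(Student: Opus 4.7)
My plan is to prove the contrapositive: a small injectivity radius at $x$ forces the height to be large with a polynomial rate, capturing the idea that both $\inj^{-1}$ and $h$ detect cuspidal depth. Writing $x = g\Gamma$, I first reduce to the case $g\in\mathfrak{S}$ via the Siegel reduction $G=\mathfrak{S}\Gamma J^{-1}$, together with the log continuity of $h$ (Lemma~\ref{Lem:Log_continuity_height}) and the standard log continuity of $\inj$ under bounded translations; the finitely many $j\in J$ contribute only multiplicative constants, which are absorbed at the end. So I take $g=kman$ with $k\in K$, $m\in\mathcal{M}$, $n\in\mathcal{N}$ compact and $a\in\mathcal{A}$.

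The first bound is a lower estimate on $h$. Since $k,m,n$ are bounded, $d_k(g)\asymp d_k(a)\asymp e^{c_k\omega_k(\log a)}$ by the structural relation $|\log d_k(a)-c_k\omega_k(\log a)|<C_0$ recalled after Definition~\ref{Def:Height_rep}. Choosing $\gamma_0=e$ in the maximum defining $\beta_k$ gives $\beta_k(g)\ge d_k(g)^{-1/c_k}$, so $h_k(g)\gg e^{-\omega_k(\log a)/q_k}$, and
\begin{equation*}
h(x)\;\gg\;\max_{1\le k\le r}e^{-\delta_{*}\omega_k(\log a)/q_k}.
\end{equation*}

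The second bound is an upper estimate on $\inj(x)^{-1}$. Fix $\gamma\in\Gamma\setminus\{e\}$ and consider the loop length $\dist_G(e,g\gamma g^{-1})$. If this is at least a fixed absolute threshold $\eta_0$, the conclusion is automatic since $h(x)\ge 2$ by Remark~\ref{Rmk:lowerbound_h}. Otherwise, by the Kazhdan--Margulis lemma, $\gamma$ is unipotent and lies in $R_u(P)$ for some $\Gamma$-rational parabolic $P$. Because only finitely many $\Gamma$-conjugacy classes of rational parabolics exist (reduction theory; trivially in rank one), I may conjugate $g$ by an element of $\Gamma$---which does not change $x$---so that $\gamma\in R_u(P_0)$. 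Decomposing $\log\gamma=\sum_\beta X_\beta$ along root spaces and using $\Ad(a)|_{\mathfrak{g}_\beta}=e^{\beta(\log a)}\cdot\mathrm{id}$ together with the boundedness of $\Ad(kmn)$ and the discreteness of the lattice $\Gamma\cap R_u(P_0)$ (so that $\|X_\beta\|\gg 1$ for at least one nonzero component), I obtain uniformly in $\gamma$ that $\dist_G(e,g\gamma g^{-1})\gg e^{\min_\beta\beta(\log a)}$, with $\beta$ ranging over the positive roots appearing in $R_u(P_0)$. Hence $\inj(x)^{-1}\ll e^{-\min_\beta\beta(\log a)}$.

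Finally, since positive roots are nonnegative integer combinations of simple roots $\alpha_j$ and the $\omega_k$ are dual to the $\alpha_j$, an elementary root-system argument produces an absolute $M=M(G)>0$ with $-\min_\beta\beta(\log a)\le M\max_k(-\omega_k(\log a))$. Chaining gives $\inj(x)^{-1}\ll e^{M\max_k(-\omega_k(\log a))}\ll h(x)^{m}$ with $m=M\max_k q_k/\delta_{*}$, and the remaining multiplicative constants absorb into a single $C=C(G/\Gamma)>0$. The main obstacle is the third paragraph---applying Kazhdan--Margulis to restrict short-loop elements to a fixed standard rational parabolic and then quantifying the loop length through root-space components with constants that do not depend on the parabolic. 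The required uniformity is provided by the finiteness of rational-parabolic conjugacy classes and the bounded covolume of $\Gamma\cap R_u(P_0)$; the concluding root-system comparison is then routine.
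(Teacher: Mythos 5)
Your route to the upper bound on $\inj(x)^{-1}$ (third paragraph) is genuinely different from the paper's and contains a gap that is exactly the one you flagged. The paper never identifies the short loop element structurally: it observes that for any $\gamma\in\Gamma$, conjugating a short loop $g_1^{-1}g_2$ by $(g\gamma)^{-1}$ lands in the discrete group $\Gamma$, hence by discreteness $\dist_G(e,g_1^{-1}g_2)\gg\|\Ad(g\gamma)^{-1}\|^{-1}$; choosing $\gamma$ so that $g\gamma\in\mathfrak{S}J^{-1}$ yields $\inj(x)^{-1}\ll\|\Ad(g')\|$ for a Siegel representative $g'=k'a'n'$, and $\|\Ad(g')\|\asymp\|\Ad(a')\|$ is then bounded by a power of $h(g')\asymp h(x)$. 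There is no appeal to Kazhdan--Margulis, rational parabolics, or a root-space decomposition of the loop element.

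Your argument instead wants to say that the short element $\gamma$ is unipotent in $R_u(P)$ for a $\Gamma$-rational parabolic $P$, then conjugate it into $R_u(P_0)$. Two problems. First, passing from Kazhdan--Margulis (nilpotency of the group generated by short elements) to the statement that short lattice elements are unipotent and contained in the unipotent radical of a $\Gamma$-rational parabolic is a substantial additional input of reduction theory, not a corollary of Kazhdan--Margulis, and has to be cited or proved. Second, and more seriously, replacing $g$ by $g\gamma_0$ to bring $\gamma$ into $R_u(P_0)$ moves $g$ out of the Siegel set $\mathfrak{S}$, which is exactly the hypothesis you used in the first two paragraphs to write $g=kman$ and to anchor both the $h$-estimate and the $\Ad(a)$-estimate to the same Cartan component $a$. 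You cannot simultaneously arrange $g\in\mathfrak{S}$ and $\gamma\in R_u(P_0)$ unless $\gamma$ was already there. What your sketch actually needs is the stronger statement that for $g\in\mathfrak{S}$, a shortest nontrivial loop at $g\Gamma$ is already realized by an element of $\Gamma\cap R_u(P_0)$ with no conjugation; that is a genuine lemma of reduction theory that does the real work here and is not established in your proposal. The paper's operator-norm route through $\|\Ad(g')\|$ sidesteps all of this.
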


\begin{proof}
Let $\dist_G$ denote the left invariant Riemannian metric on $G$. Suppose that for some $g_1 \neq g_2 \in B_{\epsilon}^G(e)$ and $x=g\Gamma \in X$, $g_1x=g_2x \in X$. Then, for any $\gamma \in \Gamma$, $(\gamma g)^{-1}(g_1^{-1}g_2)(g\gamma)$ is in $\Gamma$.
Moreover, for each point in $G$, there is a neighborhood on which the metric $\dist_G$ is Lipschitz equivalent to the metric derived from matrix norm. Thus, 
$$2\epsilon \ge \dist_G(e, g_1^{-1}g_2) \ge \dist_G(e, (\gamma g)^{-1}(g_1^{-1}g_2)g\gamma) ||\Ad(g\gamma)^{-1}||^{-1}.$$

Since $\Gamma$ is a lattice, $\inf_{e\neq\gamma \in \Gamma}\ \dist_G(e,\gamma) >0$ and thus, $\inj(x) \gg \min_{\gamma \in \Gamma}{||\Ad(g\gamma)^{-1}||}^{-1}.$ 

Since $\mathfrak{S}\cap g\Gamma J$ is non-empty, we choose $g' \in \mathfrak{S}\cap g\Gamma J$ and take its Langlands decomposition $g'=k'a'n'$ with respect to $P_0$; we get $||\Ad(g')|| \asymp ||\Ad(a')||$ and 
$$||\Ad(a')|| \ll \left(\frac{1}{\min_{1\le k\le r} \exp(\omega_k(\log(a')))}\right)^r.$$ 
By Remark \ref{Rmk:str_theory}, $\min_{\gamma \in \Gamma}{||\Ad(g\gamma)^{-1}||}^{-1}$ is comparable with ${||\Ad(g')^{-1}||}^{-1}$ and $h(x)$ is comparable with $h(g')$. Therefore,
$$\inj(x)^{-1} \ll h(x)^m $$

where $m := \frac{r}{\min_{1\le k \le r}\{\delta_*/q_k\}} = \frac{r}{\delta_*} \cdot \max_{1\le k \le r}\{q_k\}$. (Here, $\delta_*$ is from the definition of height function $h$, as in Remark \ref{Rmk:lowerbound_h}, and $\{q_k\}_{1 \le k \le r}$ are the positive real numbers used to define $h_k = \beta_k^{1/q_k}$.)
\end{proof}

Later in the proof of Proposition \ref{Prop:Number_of_Nearby_Sheets} and also in the construction of Margulis function $F_Y$ (see Theorem \ref{Margulis_inequality_F}), we shall see that Proposition \ref{Proposition:comparing_height_and_inj_radius} plays a key role, together with Theorem \ref{Thm:Margulis_height} and Corollary \ref{Corollary:Exponential_decay_height}. That is, the key property of the height function $h$ is that it is a Margulis function that is comparable with the injectivity radius. 

We now generalize our definition of height function $h:X \rightarrow [2, \infty)$ to the case when $\Gamma$ is cocompact, so that Theorem  \ref{Thm:Margulis_height}, Corollary \ref{Corollary:Exponential_decay_height}, and Proposition \ref{Proposition:comparing_height_and_inj_radius} all holds true also when $\Gamma$ is cocompact.

\begin{defn}\label{constant_function}
If $\Gamma$ is a cocompact lattice of $G$, then we define $h:X \rightarrow (0, \infty)$ to be the constant function,
$$h \equiv 2.$$
\end{defn}

\begin{proof}[Proof of Theorem \ref{Thm:Margulis_height} and Corollary \ref{Corollary:Exponential_decay_height} (for cocompact $\Gamma$)] All constant functions are Margulis functions. That is, for any $c>0$, $t>0$, and $x \in X$, we have
$$(A_{2,t}h)(x) := \frac{1}{m_U(B_2^U)}\int_{B_2 ^{U}}h(a_t ux)\,dm_U(u) = \frac{1}{m_U(B_2^U)}\int_{B_2 ^{U}} 2\,dm_U(u) = 2 < ch(x) + 2.$$
Corollary \ref{Corollary:Exponential_decay_height} can be proved in a similar way. Simply take $t_h=1$; for any $t \ge 2t_h$, we have $$(A_{t}h)(x) = 2 < \frac{1}{2^{t/t_h}}h(x) + 2.$$
\end{proof}

\begin{proof}[Proof of \ref{Proposition:comparing_height_and_inj_radius} (for cocompact $\Gamma$)] 
If $\Gamma$ is a cocompact lattice of $G$, then $X=G/\Gamma$ is compact and  $\varepsilon_0 := \inf_{x \in X} \inj(x) > 0$. Therefore, for any $x \in X$,
$$\inj(x)^{-1}\le \frac{1}{\varepsilon_0} = \frac{1}{2\varepsilon_0} h(x).$$
\end{proof}

For the remainder of the paper, the height function $h$ will refer to either of the two; if $\Gamma$ is non-uniform, then $h$ will refer to the function defined in Section 4 and if $\Gamma$ is cocompact, then $h$ will refer to the constant function in Definition \ref{constant_function} above.

\subsection{Return Lemma and Number of Sheets}

In Lemma \ref{Lem:Sheets_in_terms_of_height_function}, we shall first give a weaker bound on $\#I_Y(x)$, depending on both $\vol(Y)$ and $h(x)$. This shows that $\#I_Y(x)$ is uniformly bounded in terms of $\vol(Y)$ in the compact part of $X$.  In the case where $\Gamma$ is non-uniform, we shall make use of height function $h$ to analyze the cuspidal part of $X$. We shall show that if the sheets of $Y$ are very dense nearby a point $x$ in the cuspidal part of $X$, then for some $a_tux$ which lies in the compact part of $X$ the sheets of $Y$ must be very dense nearby $a_tux$ too. This gives us the desired result; even when $x$ is high up in the cuspidal part of $X$, there is a uniform bound for $\#I_Y(x)$, only in terms of $\vol(Y)$. Results in this section are analogous to Section 8 of Mohammadi--Oh [MO20] and Section 8 of Eskin--Mirzakhani--Mohammadi [EMM15].

 \begin{lem}[Return lemma]\label{lem:Return_lemma}
There exists a global constant $C_3=C_3(\dim(G)) >0 $ and $Q=Q(G/\Gamma, H) $ such that the following holds: for every $x\in X$, there exists $u\in B_1 ^U$ so that $a_{t_x}ux \in X_{\text{\rm{cpt}}} := \{x\in X: h(x) \le Q\}$  where $t_x=\constE\label{C:MI_for_height}\log(h(x))$. (constants $t_h$, $C_h$ and $B_h$ are as in Corollary \ref{Corollary:Exponential_decay_height}).
\end{lem}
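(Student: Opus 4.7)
The proof plan is essentially a one-shot application of the exponential decay estimate from Corollary \ref{Corollary:Exponential_decay_height} combined with an averaging (mean value) argument.

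Fix $x \in X$. Recall that $h \ge 2$ everywhere (see Definition \ref{constant_function} and Remark \ref{Rmk:lowerbound_h}), and that Corollary \ref{Corollary:Exponential_decay_height} provides absolute constants $t_h, C_h, B_h > 0$ such that, for every $t \ge t_h$,
$$ (A_t h)(x) \,\le\, \frac{C_h}{2^{t/t_h}}\, h(x) + B_h. $$
The idea is to choose $t_x$ just large enough that the first term on the right is absorbed into a constant. Setting $t_x = t_h \log_2 h(x) = (t_h/\log 2)\log h(x)$ makes $2^{t_x/t_h} = h(x)$, so the exponential factor cancels the $h(x)$ in the leading term. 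Since $h(x) \ge 2$, this choice automatically satisfies $t_x \ge t_h$, so the corollary applies, yielding
$$ (A_{t_x} h)(x) \,\le\, C_h + B_h. $$

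Now I would invoke the averaging principle: because $A_{t_x} h$ is the mean value of $h(a_{t_x} u x)$ as $u$ varies over $B_1^U$ against the normalized Haar measure, there must exist at least one $u \in B_1^U$ with
$$ h(a_{t_x} u x) \,\le\, (A_{t_x} h)(x) \,\le\, C_h + B_h. $$
Defining $Q := C_h + B_h$ and $C_3 := t_h/\log 2$ then gives the required conclusion: $a_{t_x} u x$ lies in $X_{\text{cpt}} = \{ y \in X : h(y) \le Q\}$, with $t_x = C_3 \log h(x)$. The constant $C_3$ inherits its dependence from $t_h$ (ultimately from the linear-algebra lemma, i.e.\ from $\dim(G)$), while $Q$ carries the dependence on $G/\Gamma$ and $H$ through the additive constant $B_h$.

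There is essentially no obstacle here beyond checking that the chosen $t_x$ is admissible for Corollary \ref{Corollary:Exponential_decay_height}, which is automatic from $h \ge 2$. The only subtlety worth noting is that the conclusion is \emph{existential} in $u$: we do not construct $u$ explicitly, but extract it from the fact that the $L^1$ average of a positive function is at least the essential infimum. In the cocompact case the statement is vacuous since $h \equiv 2$ and one can take $Q = 2$ and any $u$, recovering the trivial fact that the compact quotient equals $X_{\text{cpt}}$.
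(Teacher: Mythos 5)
Your core argument --- apply Corollary~\ref{Corollary:Exponential_decay_height} with $t_x = (t_h/\log 2)\log h(x)$ so that $2^{t_x/t_h} = h(x)$ cancels the leading multiplicative term, note $h\ge 2$ guarantees $t_x\ge t_h$, and extract an admissible $u\in B_1^U$ by the mean-value principle --- is exactly the engine of the paper's proof, and the constant $Q$ you get this way is fine. However, there is a genuine gap in your treatment of $C_3$: the statement requires $C_3 = C_3(\dim(G))$, but your $C_3 = t_h/\log 2$ inherits the dependence of $t_h$ on the pair $(G,H)$, not merely on $\dim(G)$. You assert in passing that $t_h$ ``ultimately'' depends only on $\dim(G)$ through the linear-algebra lemma, but Lemma~\ref{Lem:LA_lemma} explicitly has $t_{\delta,c} = t_{\delta,c}(G,H)$, so $t_h = t_{\delta_*,1/4}(G,H)$ does depend on $H$.

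The paper repairs this by invoking Lemma~\ref{classifyH}: there are only finitely many conjugacy classes of admissible $H$, represented by a finite family $\mathcal H$, so it writes $H = gH'g^{-1}$ with $H'\in\mathcal H$, transports the averaging operator to the $H'$-picture (noting $B_1^U = gB_1^{U'}g^{-1}$ and $\{a_t\} = g\{a_t'\}g^{-1}$), pays a conjugation penalty $\sigma_g$ coming from log-continuity of $h$, and then sets $C_3 := \frac{1}{\log 2}\max_{H'\in\mathcal H} t_{h'}$. That maximum over the finite set $\mathcal H$ is what makes $C_3$ independent of the particular embedding of $H$; the penalty $\sigma_g$ is absorbed into $Q$, which is allowed to depend on $G/\Gamma$ and $H$. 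Without this step, your $C_3$ would vary with $H$, which in turn would break the later claim (used in Theorem~\ref{MainThm}) that the exponent $D$ depends only on $\dim(G)$. So the fix is to add the conjugation-to-$\mathcal H$ reduction before applying the mean-value argument; the rest of your proof is sound.
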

\begin{proof}
By Lemma \ref{classifyH}, there exists $H' \in \mathcal H$ and $g \in G$ such that $H = gH'g^{-1}$. Let $\{a_t'\}$ denote the fixed one parameter subgroup of diagonalizable elements in $H'$ and let $U'$ be the unstable horospherical subgroup of $G$ with respect to $\{a_t'\}$. Note that 
$$\{a_t\} = g\{a_t'\}g^{-1} \text{\rm{ and }\it} B_1^U = gB_1^{U'}g^{-1}.$$
Let $t_h'$, $C_h'$, and $B_h'$ be the constants from Corollary \ref{Corollary:Exponential_decay_height}, with respect to $H' \subset G$. That is, $t_h'$, $C_h'$, and $B_h'$ are constants so that for all $t \ge t_h'$ and $x \in X$,
$$A_{t}'h(x) := \int _{B_1 ^{U'}} h(a_{t}'u' x)\,dm_{U'}(u') \le \frac{C_h'}{2^{t/t_h'}} \cdot h(x) +B_h'.$$

Take $C_3:=\frac{1}{\log2}\max_{H' \in \mathcal H} (t_h')$. Also, let $\sigma_g >1$ be the absolute constant following from the log continuity of $h$ so that for any $x \in X$, $\sigma_g^{-1}h(x) \le h(g^{\pm 1}x) \le \sigma_gh(x)$. (Here, $g$ is an element of $G$ so that $H=gH'g^{-1}$, as defined above.) Then, for every $x \in X$, 
\begin{align*}
(A_{t_{x}}'h)(g^{-1}x) &= \int _{B_1 ^{U'}} h(a_{t_x}'u' (g^{-1}x))\,dm_{U'}(u') \le \frac{C_h'}{2^{t_x/t_h'}} \cdot h(g^{-1}x) +B_h' \\
&\le \frac{C_h'}{2^{\frac{1}{\log2}t_h'\log(h(x))/t_h'}} \cdot \sigma_g h(x) +B_h' \le C_h'\sigma_g+B_h'
\end{align*}

and thus, there exists $u' \in B_1^{U'}$ such that $h(a_{t_{x}}' u' g^{-1}x) \le C_h'\sigma_g+B_h'$. 

Now, if we take $u := gu'g^{-1} (\in B_1^U)$, then we have that 
$$h(a_{t_{x}}ux) = h(ga_{t_{x}}'g^{-1} \cdot gu'g^{-1}x) = h(ga_{t_{x}}'u'g^{-1}x) \le \sigma_g h(a_{t_{x}}'u'g^{-1}x) = \sigma_g(C_h'\sigma_g+B_h').$$

Take $Q:= \sigma_g(C_h'\sigma_g+B_h')$. All that is left is to show that $C_3$ is a constant that only depends on $\dim(G)$ (and hence is independent of $H$ and lattice $\Gamma$). 

Recall that $t_h'$ (see the proof of Theorem \ref{Theorem:Margulis_Inequality_height} and also the proof of Corollary \ref{Corollary:Exponential_decay_height}) is the constant $t_{\delta_*, 1/4}' = t_{\delta_*, 1/4}'(G, H')$ as in Lemma \ref{Lem:LA_lemma}. From Lemma \ref{Lem:LA_lemma}, we have that $\delta_* = \frac{1}{2}\delta_1$ depends only on $\dim(G)$ and therefore, $t_{\delta_*, 1/4}'$ is a constant that only depends on $G$ and $H'$ (see Lemma \ref{Lem:LA_lemma}). 

Since $\mathcal H$ is finite data that only depend on $\dim(G)$ (Lemma \ref{classifyH}), $$C_3 =\frac{1}{\log2}\max_{H' \in \mathcal H} (t_h')$$ is a constant that only depends on $\dim(G)$. The fact that $C_3$ only depends on $\dim(G)$ will be later used to show that constant $D$ in Theorem \ref{MainThm} only depends on $\dim(G)$.

\end{proof}

\begin{remark} Let $\sigma_{U}>1$ be a global constant such that for all $v \in \Lie(G)$ and $u\in B_1 ^U$, 
$$ \sigma_{U}^{-1}||v|| \le ||u.v|| \le \sigma_{U}||v||$$
where $u.v$ denotes the adjoint representation of $u\in G$ on $v\in \Lie(G)$. For later computational purposes (see the proof of Proposition \ref{Prop:Number_of_Nearby_Sheets}), we shall assume that $Q > \sigma_U$. Even if we replace $Q= \sigma_g(C_h'\sigma_g+B_h')$ by $Q = \max\{\sigma_g(C_h'\sigma_g+B_h'), \sigma_U\}$, Lemma \ref{lem:Return_lemma} still holds.

We shall also assume that $C_3 \ge 1$ (this will also be used in the proof of Proposition \ref{Prop:Number_of_Nearby_Sheets}). Even if we replace $C_3=\frac{1}{\log2}\max_{H'\in \mathcal H} (t_h')$ with $C_3 = \max \{ \frac{1}{\log2}\max_{H' \in \mathcal H} (t_h'), 1\}$, Lemma \ref{lem:Return_lemma} still holds.
\end{remark}

\begin{remark}\label{epsilonX}
Note that by Proposition \ref{Proposition:comparing_height_and_inj_radius}, $X_{\text{\rm{cpt}}} \subseteq X_{\epsilon_X} = \{x \in X : \inj(x) \ge \epsilon_X \}$, where $\epsilon_X = \ref{C:comparing_height_and_inj_radius}^{-1}Q^{-m}$ (where $\ref{C:comparing_height_and_inj_radius}$ and $m$ as in Proposition \ref{Proposition:comparing_height_and_inj_radius}):
$$ \inj(a_{t_{x}}u x) \ge \ref{C:comparing_height_and_inj_radius}^{-1}h(x)^{-m} \ge \ref{C:comparing_height_and_inj_radius}^{-1}Q^{-m} = \epsilon_{X}.$$
\end{remark}

Recall, from the preliminaries that there exists an absolute constants $ \sigma_{0} >1 $ such that for all $ w \in \Lie(G)$ with $||w|| \le \epsilon_{X}$,
    $$ \sigma_{0}^{-1} ||w|| \le \dist(x, \exp(w)x) \le \sigma_{0}||w||.$$ 

\begin{defn}\label{Def:Window_I_Y}
Let $S$ be a subgroup with $H\subseteq S \subsetneq G$ and consider the decomposition of $\Lie(G)$ given by $\Lie(G)=\Lie(S)\oplus V_{S}$ where $V_S$ is $\Ad(\Lie(S))$-invariant, but not necessarily irreducible. For each closed $S$-orbit $Y=Sy$ and $x\in X$, we define the set
$$I_Y(x)=\{v\in V_{S}\setminus\{0\}:\|v\| \le \varepsilon_h \cdot h(x)^{-\kappa}, \ \exp(v)x \in Y\}$$
where global constants $\kappa \gg 1$ and $\varepsilon_h \ll 1$ are chosen to be; $\kappa := \max\{m, 3C_3\}$ and $\varepsilon_h = \min \{ 2^{\kappa}\epsilon_X, \frac{1}{2}\sigma_0^{-1}C_2^{-1}\}$ ($m$ and $C_2$ as in Proposition \ref{Proposition:comparing_height_and_inj_radius}, $C_3$ as in Lemma \ref{lem:Return_lemma}, and $\epsilon_X$ as in Remark \ref{epsilonX}).

\end{defn}

\begin{lem}\label{Lem:Sheets_in_terms_of_height_function}
Let $Y=Sy$ be a closed $S$-orbit where $H\subseteq S\subsetneq G$. For all $x\in X,$ we have that 
$$ \#I_{Y}(x) < C_{4} h(x)^{d_{S}m} \vol(Y)$$
where $d_S$ is the dimension of $\Lie(S)$ and $C_{4} = (4\ref{C:comparing_height_and_inj_radius})^{d_{S}}$.
\end{lem}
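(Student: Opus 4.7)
The plan is to prove this by a volume-packing argument in the $S$-direction inside $Y$. To each $v \in I_Y(x)$ I will attach a small neighborhood $\Omega_v \subset Y$ of the form $B_r^S \cdot \exp(v)x$, show these are pairwise disjoint, and conclude via $\sum_v \vol(\Omega_v) \le \vol(Y)$.

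First, I will control the injectivity radius near each $y_v := \exp(v)x$. Proposition \ref{Proposition:comparing_height_and_inj_radius} gives $\inj(x) \ge \ref{C:comparing_height_and_inj_radius}^{-1}h(x)^{-m}$. Using $\kappa \ge m$, $h(x) \ge 2$, and the choice $\varepsilon_h \le \tfrac{1}{2}\sigma_0^{-1}\ref{C:comparing_height_and_inj_radius}^{-1}$ from Definition \ref{Def:Window_I_Y},
\[
\dist(x, y_v) \le \sigma_0\|v\| \le \sigma_0\varepsilon_h h(x)^{-m} \le \tfrac{1}{2}\inj(x),
\]
so $\inj(y_v) \ge \tfrac{1}{2}\inj(x) \ge \tfrac{1}{2}\ref{C:comparing_height_and_inj_radius}^{-1}h(x)^{-m}$ uniformly in $v$. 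I then set $r := \tfrac{1}{4}\ref{C:comparing_height_and_inj_radius}^{-1}h(x)^{-m}$ and define $\Omega_v := B_r^S y_v \subset Y$. Because $r \le \tfrac{1}{2}\inj(y_v)$, the map $s \mapsto sy_v$ embeds $B_r^S$ injectively into $Y$, yielding $\vol(\Omega_v) \ge r^{d_S}$ after a routine normalization (replacing $B_r^S$ by a coordinate box of side $r$ if one wants to avoid a dimensional Euclidean-ball constant).

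For disjointness, suppose $s_1 y_{v_1} = s_2 y_{v_2}$ with $s_i \in B_r^S$. Then $s_1\exp(v_1)$ and $s_2\exp(v_2)$ represent the same point of $X$, and each lies in a ball well inside the injectivity radius around the identity acting on $x$, so they coincide in $G$. Writing $s_i = \exp(w_i)$ with $w_i \in \Lie(S)$ and invoking that $(w, v) \mapsto \exp(w)\exp(v)$ is a local diffeomorphism from a neighborhood of the origin in $\Lie(S) \oplus V_S$ onto a neighborhood of $e \in G$, we conclude $w_1 = w_2$ and $v_1 = v_2$. Summing the disjoint volumes gives $\#I_Y(x) \cdot r^{d_S} \le \vol(Y)$, which rearranges to
\[
\#I_Y(x) \le r^{-d_S}\vol(Y) = (4\ref{C:comparing_height_and_inj_radius})^{d_S}\, h(x)^{d_S m}\vol(Y) = C_4\, h(x)^{d_S m}\vol(Y).
\]

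The main technical obstacle is the disjointness step: one must combine the bound $\|v\| \le \varepsilon_h h(x)^{-\kappa}$ from Definition \ref{Def:Window_I_Y}, the chosen size of $r$, and the Riemannian distortion $\sigma_0$ to ensure that $s_i\exp(v_i)$ stays strictly inside an injective neighborhood of $x$ in $G$, and then to invoke uniqueness of the $\Lie(S) \oplus V_S$ exponential coordinates. The dimensional normalization hidden in the lower bound for $\vol(\Omega_v)$ is routine and accounts for the factor of $4$ in $C_4$.
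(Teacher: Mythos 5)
Your argument is essentially the same volume-packing argument the paper uses: both proofs produce disjoint $S$-balls around each $\exp(v)x$ whose radius is a fixed multiple of $\inj(x)$ (equivalently, of $\ref{C:comparing_height_and_inj_radius}^{-1}h(x)^{-m}$), then bound $\#I_Y(x)$ by $\vol(Y)$ divided by the volume of one such ball. The only cosmetic difference is that you spell out the disjointness step via uniqueness of $\Lie(S)\oplus V_S$ exponential coordinates, whereas the paper states it more tersely after recording $\inj(\exp(v)x)\geq \tfrac14\inj(x)$; the resulting constant $C_4=(4\ref{C:comparing_height_and_inj_radius})^{d_S}$ comes out identically.
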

\begin{proof}
For any $x \in X$ and $v \in I_Y(x)$,
$$\dist(x,\exp(v)x) \le \sigma_{0}||v|| \le \sigma_0 \varepsilon_h h(x)^{-\kappa} \le \frac{1}{2} \ref{C:comparing_height_and_inj_radius}^{-1} h(x)^{-m} \le \frac{1}{2} \inj(x).$$
(Since $h \ge 2$ and $\varepsilon_h \le 2^\kappa \epsilon_X$, $\|v\| \le \varepsilon_h  h(x)^{-\kappa} \le \epsilon_X$ and thus $\dist(x, \exp(v)x) \le \sigma_0\|v\|$. Also, since $h \ge 1$, $\kappa \le m$, and $\varepsilon_h \le  \frac{1}{2}\sigma_0^{-1}C_2^{-1}$, we have $\sigma_0 \varepsilon_h h(x)^{-\kappa} \le \frac{1}{2} \ref{C:comparing_height_and_inj_radius}^{-1} h(x)^{-m}$.)

It follows that for each $v \in I_Y(x)$, $\inj(\exp(v)x)\ge \frac{1}{4} \inj(x)$, which means that the balls 
$$\left(B_{Y}(\exp(v)x,\inj(x)/4)\right)_{v \in I_{Y}(x)}$$ 
are disjoint from each other. Hence, 
\begin{align*}
        \#I_{Y}(x) \cdot \vol(B_{S}(e, \inj(x)/4) &= \vol\{\cup (B_{Y}(\exp(v)x, \inj(x)/4)) : v \in I_{Y}(x)\} \\
         &\le  \vol(B_{Y}(x, \inj(x)) \le \vol(Y).
\end{align*}
Therefore, 
\begin{align*}
        \#I_{Y}(x) &\le \vol(B_{S}(e, \inj(x)/4))^{-1} \cdot \vol(Y) \\
         &< 4^{d_{S}}\inj(x)^{-d_{S}} \cdot \vol(Y) \le 4^{d_{S}}\ref{C:comparing_height_and_inj_radius}^{d_{S}}h(x)^{d_{S}m} \vol(Y).
\end{align*}
\end{proof}

\begin{proof}[Proof of Proposition \ref{Prop:Number_of_Nearby_Sheets}]
If $x \in X$ is so that $h(x) \le Q^2$, then by Lemma \ref{Lem:Sheets_in_terms_of_height_function}, 
$$ \#I_{Y}(x) < (4C_2)^{d_S} Q^{2d_{S}m} \vol(Y) .$$
Suppose that $h(x) > Q^2$. By Lemma \ref{lem:Return_lemma}, there exists $u\in B_1 ^U$ and $t_{x} = \ref{C:MI_for_height}\log(h(x))$ such that 
$h(a_{t_{x}}ux) \le Q.$

We claim that if $v \in I_{Y}(x)$, then $a_{t_{x}}u.v \in I_{Y}(a_{t_{x}}ux)$ and moreover, the map $a_{t_{x}}u:I_{Y}(x) \rightarrow I_{Y}(a_{t_{x}}ux)$ which sends $v \mapsto a_{t_{x}}u.v$ is injective. If $v \in I_{Y}(x)$, then
\begin{align*}
        ||a_{t_{x}}u.v || & \le e^{t_{x}}\sigma_{U}||v|| = \exp(\ref{C:MI_for_height}\log(h(x)))\sigma_{U}||v|| = \sigma_{U}h(x)^{C_{3}}||v|| \\
         & \le \sigma_{U}h(x)^{C_{3}}\varepsilon_h h(x)^{-\kappa}  \le \sigma_{U} \varepsilon_h h(x)^{-2\kappa/3} \le \sigma_{U} \varepsilon_h Q^{2 \cdot (-2\kappa/3)} = \sigma_{U} \varepsilon_h Q^{-4\kappa/3} \le \varepsilon_h Q^{-\kappa}
\end{align*}
since $\kappa \ge 3C_3 \ge 3$, $h(x) > Q^2$, and $Q \ge \sigma_U$.
On the other hand, $\ h(a_{t_{x}}ux)^{-\kappa} \ge Q^{-\kappa}$ and thus, 
$$||a_{t_{x}}u.v || < \varepsilon_h h(a_{t_{x}}ux)^{-\kappa}.$$
Moreover,
$$a_{t_{x}}\exp(v)x = \exp(a_{t_{x}}u.v)a_{t_{x}}ux \in Y$$ since $Y$ is $S$-invariant. Therefore, $a_{t_{x}}u.v \in I_{Y}(a_{t_{x}}u_{s}x)$ and the map $v \mapsto a_{t_{x}}u.v$ from $I_{Y}(x)$ to $I_{Y}(a_{t_{x}}ux)$ is injective. Consequently,
$$ \#I_{Y}(x) \le \#I_{Y}(a_{t_{x}}ux) <(4C_2)^{d_S} Q^{2d_{S}m} \vol(Y) .$$

We complete the proposition by taking $C_{1}=(4C_2)^{d_G}Q^{2d_Gm}$ where $d_G$ denotes the dimension of $\Lie(G)$.
\end{proof}

We end this section with the following lemma on $\kappa$, which will be later used to show that constant $D$ in Theorem \ref{MainThm} is an absolute constant that only depends on $\dim(G)$.

\begin{lem}\label{kappa} Constant $\kappa$ in Definition \ref{Def:Window_I_Y} has an upper bound as a function of $\dim(G)$.
\end{lem}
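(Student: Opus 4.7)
The plan is to unpack the definition $\kappa := \max\{m, 3C_3\}$ from Definition \ref{Def:Window_I_Y} and verify separately that each of $m$ and $C_3$ admits an upper bound depending only on $\dim(G)$. The underlying principle I will invoke is that, by the classification of semisimple Lie algebras, there are only finitely many isomorphism classes of semisimple Lie groups $G$ without compact factors of any fixed dimension; combined with Lemma \ref{classifyH}, this allows one to bound finite structural data uniformly in $\dim(G)$.

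For $C_3$, I will simply quote the proof of Lemma \ref{lem:Return_lemma}, which already observes that $C_3 = \max\{\frac{1}{\log 2}\max_{H' \in \mathcal{H}} t_h', 1\}$. Here $\mathcal H$ is the finite collection from Lemma \ref{classifyH}, whose cardinality and members depend only on $\dim(G)$, while each $t_h' = t_{\delta_*, 1/4}'(G, H')$ from Lemma \ref{Lem:LA_lemma} depends on the pair $(G, H')$ and on $\delta_* = \tfrac12 \delta_1(\dim(G))$. Taking the maximum over this finite family yields the bound for $C_3$.

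For $m$ I will split into cases according to whether $\Gamma$ is cocompact. In the cocompact case $h \equiv 2$, and the proof of Proposition \ref{Proposition:comparing_height_and_inj_radius} gives $\inj(x)^{-1} \le \tfrac{1}{2\varepsilon_0} h(x)$, so one may take $m = 1$ and absorb $\varepsilon_0$ into $\ref{C:comparing_height_and_inj_radius}$. In the non-uniform case, the proof of that proposition yields $m = \tfrac{r}{\delta_*} \cdot \max_{1 \le k \le r} q_k$, where $r$ is the real rank of $G$ (so $r \le \dim G$), $\delta_* = \tfrac12 \delta_1$ depends only on $\dim(G)$ by Lemma \ref{Lem:LA_lemma}, and the positive numbers $q_k$ are chosen from the root-system data of $G$ under the normalizations $\min_k \{c_k q_k\} = 1$ and $\sum_k q_k \omega_k$ lying in the positive Weyl chamber. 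Since the root system is determined by the isomorphism class of $G$ and there are only finitely many such classes in each dimension, $\max_k q_k$ can be uniformly bounded in terms of $\dim(G)$.

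The main step requiring care, and the place I expect the proof to be least automatic, is the last one: verifying that a canonical choice of the $q_k$ exists for each isomorphism class of $G$ which is bounded in terms of that class alone, so that the finiteness argument closes. This is straightforward given the finiteness of root-system data, but should be made explicit. Once this is done, combining the two bounds yields $\kappa = \max\{m, 3C_3\} \le \kappa_0(\dim G)$ for a suitable function $\kappa_0$, as required.
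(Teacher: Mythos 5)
Your proof is correct and takes essentially the same route as the paper: unpack $\kappa=\max\{m,3C_3\}$, bound $C_3$ via the finite collection $\mathcal H$ from Lemma \ref{classifyH}, and bound $m=\frac{r}{\delta_*}\max_k q_k$ via $r\le\dim(G)$, $\delta_1=\delta_1(\dim(G))$ from Lemma \ref{Lem:LA_lemma}, and root-system data. You fill in the last step more explicitly than the paper does by invoking the finiteness of isomorphism classes of semisimple groups in each dimension, and you also handle the cocompact case separately, but these are refinements of the same argument rather than a genuinely different route.
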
 
\begin{proof}
Since $\kappa := \max\{m, 3C_3\}$ and $C_3$ is a constant that only depends on $\dim(G)$ (see Lemma \ref{lem:Return_lemma}), it is enough to show that $m$ from Proposition \ref{Proposition:comparing_height_and_inj_radius} has an upperbound as a function of $\dim(G)$.

Recall that (see the proof of Proposition \ref{Proposition:comparing_height_and_inj_radius}) $$m = \frac{r}{\delta_*} \cdot \max_{1\le k \le r}\{q_k\} = \frac{r}{\frac{1}{2}\delta_1} \cdot \max_{1\le k \le r}\{q_k\}$$
where $\delta_1$ is as in Lemma \ref{Lem:LA_lemma} and $q_k$ are as in definition of $h_k$. By Lemma \ref{Lem:LA_lemma}, $\delta_1$ is a constant that only depends on $\dim(G)$. From reduction theory, we have that $r \le \dim(G)$. Lastly, since the root system $\{\omega_k\}$ is a data that only depends on $G$, the value of $\max_{1\le k \le r}\{q_k\}$ also has an upper bound in terms of $\dim(G)$.

\end{proof}

\section{Margulis function: Construction and estimates}
In this section we construct Margulis functions that we associate to the orbit of an intermediate subgroup. The main result of this section is a Margulis inequality for the functions we consider (Theorem \ref{Margulis_inequality_F}).

\begin{defn}[Margulis function]\label{def:Margulis_function}
 For an intermediate subgroup $H\subseteq S\subsetneq G$ and closed $S$-orbit $Y=Sy$, define $f_Y:=X\to (0,\infty)$ by 
$$f_Y(x):=\begin{cases}
  \sum_{v\in I_Y(x)} \|v\|^{-\delta_F},  & \text{ if } I_Y(x)\ne \emptyset \\
  h(x), & \text{ otherwise}
\end{cases}.$$
where $\delta_F := \min\{\delta_0/2, 1/\kappa\}$, $\delta_0$ as in Lemma \ref{Lem:General_LA_lemma} applied to the adjoint representation of $G$ on the Lie algebra $\Lie(G)$ and $\kappa$ as in Definition \ref{Def:Window_I_Y}.

For $\lambda \ge 1$, define $F_{\lambda,Y}:X\to(0,\infty)$ by
$$F_{\lambda, Y}(x)=f_Y(x)+\lambda \vol(Y)h(x).$$
\end{defn}

We will later fix an explicit $\lambda$ in Theorem \ref{Margulis_inequality_F} so that $F_{\lambda, Y}$ satisfies a Margulis inequality.

\begin{remark}\label{dimG}
Note that by Lemma \ref{Lem:General_LA_lemma} and Lemma \ref{kappa}, $\delta_F$ can be thought of as a constant that only depends on $\dim(G)$. Later, this will imply that constant $D$ in Theorem \ref{MainThm} is only dependent on $\dim(G)$.
\end{remark}

\begin{prop}[Log continuity of $F_{\lambda, Y}$]\label{Proposition:Log_continuity_F}
 Let $K$ be a compact subset of $G$. Then there exists an absolute constant $\sigma=\sigma(K)$ (only depending on $K$ and independent on the choice of $Y$ and $\lambda$) such that for all Margulis function $F_{\lambda, Y}$, point $x\in X$, and $g \in K$,
$$\sigma^{-1}F_{\lambda, Y}(x)\le F_{\lambda, Y}(gx)\le \sigma F_{\lambda, Y}(x).$$
\end{prop}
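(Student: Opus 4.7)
The plan is to treat each summand of $F_{\lambda, Y} = f_Y + \lambda \vol(Y) h$ separately, establishing log-continuity with a common constant $\sigma = \sigma(K)$ depending only on $K$ (and not on $Y$ or $\lambda \ge 1$). For the second summand, Lemma \ref{Lem:Log_continuity_height} provides $\sigma_h = \sigma_h(K)$ depending only on $K$ and $\dim(G)$ such that $\sigma_h^{-1}h(x) \le h(gx) \le \sigma_h h(x)$; this immediately gives log-continuity of $\lambda \vol(Y) h$, uniformly in $\lambda \ge 1$ and $\vol(Y)$.

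For $f_Y$, first observe via Lemma \ref{Lem:Log_continuity_height} that the window sizes $\varepsilon_h h(x)^{-\kappa}$ and $\varepsilon_h h(gx)^{-\kappa}$ are comparable up to the factor $\sigma_h^\kappa$. The main step is to construct a bi-Lipschitz correspondence between $I_Y(x)$ and $I_Y(gx)$ whose norm ratios are controlled by $K$. Given $v \in I_Y(x)$, the sheet $\mathcal{S} = S\exp(v)x \subset Y$ containing $\exp(v)x$ should intersect a small neighborhood of $gx$; the corresponding $w \in V_S$ is determined by $\exp(w) gx \in \mathcal{S}$, which is equivalent to the algebraic condition $\exp(w)\, g \exp(-v) \in S$. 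Applying the inverse function theorem to the local diffeomorphism $V_S \times S \to G$, $(w,s) \mapsto \exp(w) s$, uniformly for $g$ in the compact set $K$, produces a unique small $w = w(v,g)$ with $\|w\|$ comparable to $\|v\|$ up to constants depending only on $K$.

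Pairing elements of $I_Y(x)$ and $I_Y(gx)$ via this correspondence, and using that $\|w\|^{-\delta_F}$ is comparable to $\|v\|^{-\delta_F}$ up to $K$-dependent factors, would give that $f_Y(gx)$ and $f_Y(x)$ are comparable whenever both sets are nonempty. The cases in which one or both of $I_Y(x)$, $I_Y(gx)$ is empty reduce to log-continuity of $h$, since $f_Y$ equals $h$ by definition in these situations.

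The main obstacle is making the inverse function theorem argument uniform across all $g \in K$: the decomposition $V_S \times S \to G$ is a local diffeomorphism only near the identity, so for general $g$ one must cover $K$ by finitely many neighborhoods and chain local estimates, using compactness to extract uniform constants. A second subtle point is ensuring that the image $w$ actually lies within the threshold $\varepsilon_h h(gx)^{-\kappa}$ required for membership in $I_Y(gx)$; elements whose images straddle this threshold need to be controlled using the bound $\#I_Y \le C_1 \vol(Y)$ from Proposition \ref{Prop:Number_of_Nearby_Sheets}, with any excess contributions absorbed into the $\lambda \vol(Y) h$ term of $F_{\lambda, Y}$ via the assumption $\lambda \ge 1$.
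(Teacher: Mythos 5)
Your decomposition of $F_{\lambda,Y}=f_Y+\lambda\vol(Y)h$, the reduction of the $h$-term to Lemma~\ref{Lem:Log_continuity_height}, and the handling of threshold-straddling elements via $\#I_Y\le C_1\vol(Y)$ and absorption into $\lambda\vol(Y)h$ (using $\lambda\ge 1$ and the lower bound for $\vol(Y)$) are exactly the moves the paper makes. The split of $f_Y(gx)$ into ``small'' and ``large'' contributions at a threshold $\varepsilon$ comparable to $\varepsilon_h h(x)^{-\kappa}$ is the paper's central estimate, and you have correctly identified where that threshold must go.

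Where you diverge is the mechanism for transporting nearby sheets between $x$ and $gx$. You propose solving $\exp(w)\,g\exp(-v)\in S$ by the inverse function theorem applied to $V_S\times S\to G$. The paper does something considerably more elementary: given $v\in I_Y(gx)$ with $\|v\|<\varepsilon$, it passes directly through the group identity $\exp(v)g=g\exp(\Ad(g^{-1})v)$ and takes the linear map $v\mapsto g^{-1}.v$ as the correspondence, with the estimate $\|g^{-1}.v\|\le R_K\|v\|$ coming from the operator norm of $\Ad|_{K}$. This sidesteps the uniformity obstacle you flag entirely: there is no patching of local inverses, no choice of base points, and the dependence on $K$ is visible immediately as a linear bound. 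The threshold $\varepsilon=R_K^{-1}\varepsilon_h h(x)^{-\kappa}$ is chosen precisely so this linear image lands inside the window at $x$. Your IFT route, as sketched, is genuinely incomplete at the ``main obstacle'' you raise: covering $K$ by finitely many coordinate charts for $V_S\times S\to G$ does not by itself give a small solution $w$ for general $g\in K$, since $g\exp(-v)$ need not be close to $S$; you would need to use the existence of elements of $I_Y(gx)$ and $I_Y(x)$ simultaneously to reduce to a perturbative regime, and that is left unsaid. If you replace that step with the paper's adjoint identity, the rest of your outline closes.
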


\begin{proof}
Recall that we denote the adjoint representation of elements $g\in G$ and $v\in \Lie(G)$ as $g.v$.
 Since $K\subset G$ is compact, there exists $R_K\ge1$ so that 
 $$R_K ^{-1}\|v\|\le\|g.v\|\le R_K \|v\|$$
 for every $g\in K$ and $v\in \Lie(G)$.
 Also, by the log continuity of $h$, there exists $\sigma_h = \sigma_h(K)\ge1$ so that 
 $$\sigma_h ^{-1} \cdot h(x)\le h(gx)\le \sigma_h \cdot h(x)$$
 for every $g\in K$ and $x\in X$.
 
If $I_Y(gx)$ is empty, then
$$f_Y(gx)= h(gx)\le \sigma_h \cdot h(x)$$

Now suppose that $I_Y(gx)$ is not empty. Set $\varepsilon = R_K^{-1} \varepsilon_h h(x)^{-\kappa}$.
Note that we can write
$$f_Y(gx)=\sum_{v\in I_Y(gx),\|v\|<\varepsilon}\|v\|^{-\delta_F} + \sum_{v\in I_Y(gx),\|v\|\ge \varepsilon}\|v\|^{-\delta_F}.$$

By Proposition \ref{Prop:Number_of_Nearby_Sheets} since $\#I_Y(gx)\le C_1\vol(Y)$, then we have the following bound for the second term above,
$$ \sum_{v\in I_Y(gx),\|v\|\ge \varepsilon}\|v\|^{-\delta_F}\le C_1\vol(Y) \varepsilon^{-\delta_F}= (C_1(R_K\varepsilon_h^{-1}) ^{\delta_F}\vol(Y)) \cdot h(x)^{\delta_F \kappa} \le C_5\vol(Y) h(x)$$

where $C_5 := C_1(R_K\varepsilon_h^{-1})^{\delta_F}$. (Here we are using that $h\ge 1$ and $\delta_F \le 1/\kappa$.)
If there is no $v\in I_Y(gx)$ with $\|v\|<\varepsilon$, then this proves the claim. If there is $v\in I_Y(gx)$ with $\|v\|<\varepsilon$, then
$$\|g^{-1}.v\| < R_K\varepsilon= \varepsilon_h h(x)^{-\kappa}.$$
Thus, $g^{-1}.v\in I_Y(x)$. Setting $v'=g^{-1}v$ yields that
$$\sum_{v\in I_Y(gx),\|v\|<\varepsilon}\|v\|^{-\delta_F} \le \sum_{v'\in I_Y(x)}\| g.v'\|^{-\delta_F} \le R_K ^{\delta_F}\sum_{v'\in I_Y(x)}\|v'\|^{-\delta_F} =R_K^{\delta_F}f_Y(x).$$

In total, we have a bound of the form
$$ f_Y(gx)\le R_K^{\delta_F}f_Y(x) + (C_5\vol(Y)+\sigma_h)h(x) .$$
Thus, 
\begin{align*}
    F_{\lambda, Y}(gx) &= f_Y(gx)+\lambda \vol(Y) h(gx)\\
    &\le R_K^{\delta_F}f_Y(x) + (C_5\vol(Y)+\sigma_h)h(x)+ \lambda \vol(Y)\cdot \sigma_h h(x).
\end{align*}
Note that $\vol(Y)$ is bounded away from zero. That is, there exists an absolute constant $ \tau >0 $ such that for any intermediate subgroup $H \subseteq S \subsetneq G$  and closed $S$-orbit $Y=Sy$, $\vol(Y)>\tau$. (See Lemma \ref{Lem:Lower_bound_on_volume} below.) Since $\lambda \ge 1$, we have 
\begin{align*}
    F_{\lambda, Y}(gx) &\le R_K^{\delta_F}f_Y(x) + (C_5+\sigma_h\tau^{-1} +\sigma_h\lambda)\vol(Y)h(x)\\
    &\le R_K^{\delta_F}f_Y(x) + (C_5+\sigma_h\tau^{-1} +\sigma_h)\lambda\vol(Y)h(x).
\end{align*}
Put $\sigma:= \max \{R_K^{\delta_F}, C_5+\sigma_h(\tau^{-1}+1)\}$. We note that $\sigma$ is independent of $Y$ and $\lambda$.

The remaining inequality $$\sigma^{-1}F_{\lambda, Y}(x)\le F_{\lambda, Y}(gx)$$ is proved in a similar fashion.
\end{proof}

\begin{lem}\label{Lem:Lower_bound_on_volume}
There exists an absolute constant $\tau>0$ such that for any intermediate subgroup $H\subseteq S\subsetneq G$ and closed $S$-orbit $Y=Sy$, $\vol{(Y)}>\tau$.
\end{lem}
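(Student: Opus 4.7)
The plan is to combine two classical facts: the Kazhdan--Margulis theorem on volumes of lattices, and the finiteness of intermediate subgroups coming from Lemma \ref{finitely_many_intermediate_subgroups}.

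First I would unpack what $\vol(Y)$ means. For a closed $S$-orbit $Y=Sy$ of finite volume, the stabilizer $\Lambda_y := \{s \in S : sy = y\}$ is a lattice in $S$, and the orbit map $S/\Lambda_y \to Y$ is a measure-preserving bijection with respect to the Haar measure on $S$ induced by the Riemannian structure on $G$ (restricted to $\Lie(S)$). Thus $\vol(Y) = \vol(S/\Lambda_y)$, so it suffices to bound covolumes of lattices in $S$ from below.

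Next, I would invoke the Kazhdan--Margulis theorem: for every semisimple Lie group $S$ without compact factors, there is a neighborhood $U_S$ of the identity in $S$ such that for every lattice $\Lambda \subseteq S$ one has $\Lambda \cap U_S = \{e\}$. This implies a uniform positive lower bound $\tau_S > 0$ on $\vol(S/\Lambda)$ for every lattice $\Lambda$ in $S$, depending only on $S$ as a semisimple Lie group (with its fixed Haar normalization coming from the Killing form on $\Lie(G)$).

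Finally, by Lemma \ref{finitely_many_intermediate_subgroups}, there are only finitely many intermediate subgroups $H \subseteq S \subsetneq G$, and each is semisimple without compact factors, so Kazhdan--Margulis applies to each of them. Taking
\[
\tau := \min_{H \subseteq S \subsetneq G} \tau_S
\]
over this finite collection gives the desired uniform lower bound $\vol(Y) \ge \tau_S \ge \tau$ for every $S$ and every closed $S$-orbit $Y$.

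The only point that requires any care is the compatibility of the Haar normalizations on the different subgroups $S$, but since the inner product on $\Lie(G)$ restricts canonically to $\Lie(S)$ for each intermediate $S$, the Haar measure and hence the Kazhdan--Margulis lower bound are canonically determined. There is no serious obstacle: everything reduces to a classical theorem applied to finitely many groups.
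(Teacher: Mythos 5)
Your proof is correct, and it takes a genuinely different route from the one in the paper. The paper argues via quantitative non-divergence of unipotent flows (Dani--Margulis \cite{MR1101994}): a fixed one-parameter unipotent subgroup of $H$ forces every finite-volume closed $S$-orbit $Y$ to meet a fixed compact part $X_\rho \subseteq X$, and then a local ``box'' computation produces the bound $\vol(Y)\ge\eta^{d_S}\ge\eta^{d_H}$. You instead identify $\vol(Y)$ with the covolume $\vol(S/\Lambda_y)$ of the stabilizer lattice $\Lambda_y = S\cap g\Gamma g^{-1}$ in $S$, invoke the Kazhdan--Margulis theorem to get a uniform lower bound $\tau_S$ on covolumes of lattices in $S$, and then use Lemma \ref{finitely_many_intermediate_subgroups} to take the minimum over the finitely many intermediate $S$. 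Both approaches produce a constant depending only on $G$, $H$, and the chosen metric; yours has the small advantage of being manifestly independent of $\Gamma$, whereas the Dani--Margulis $\rho$ a priori depends on the quotient $G/\Gamma$. The reduction $\vol(Y)=\vol(S/\Lambda_y)$ that you rely on does require a moment's care, and you correctly flag it at the end: because the Riemannian metric on $G$ is right-invariant, the pullback of the metric on $X$ along $s\mapsto sg\Gamma$ is the right-invariant metric on $S$ induced by restricting the inner product on $\Lie(G)$ to $\Lie(S)$, independently of the basepoint $g$, so the Riemannian volume of $Y$ really is the Haar covolume of $\Lambda_y$ in $S$ in a normalization canonically determined by the ambient data. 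One minor imprecision worth fixing: the Kazhdan--Margulis theorem does \emph{not} assert that every lattice $\Lambda$ in $S$ satisfies $\Lambda\cap U_S=\{e\}$ for a fixed neighborhood $U_S$; rather, it asserts that some $S$-conjugate $s\Lambda s^{-1}$ does. Since conjugation does not change covolume, the corollary you actually use (a uniform positive lower bound on $\vol(S/\Lambda)$ over all lattices $\Lambda$ in $S$) remains correct, so this slip does not affect your argument.
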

\begin{proof} This follows from the quantitative non-divergence of Dani--Margulis \cite{MR1101994}. Let $U'$ be a 1-parameter unipotent subgroup of $H$. By the quantitative 
non-divergence of the action of $U'$ on $X$, there exists some $\rho >0$ such that $m_Y(X\setminus X_{\rho })<0.01$
for every closed $S$-orbit $Y=Sy$ ($H \subseteq S\subsetneq G$ and $y\in Y$), where $m_Y$ is the probability Haar measure on $Y$.

Note that we have the Lie algebra decomposition $\Lie(G)=\Lie(S)\oplus V_{S}$.
Let $\eta\asymp \rho$ be so that the map $g\mapsto gx$ is injective for all $x\in X_\rho$ and all
$$g\in \text{\rm{Box}}(\eta):= \exp(B_{\eta}^{\Lie(S)} )\exp(B_{\eta}^{V_{S}}).$$

For any connected component $C$ of $Y\cap \text{\rm{Box}}(\eta)z$ with $z \in X_\rho$, there exists some $v\in V_{S}$ such that
$$C= C_v:=\exp(B_{\eta}^{\Lie(S)})\exp(v)z.$$

Since $m_Y(X\setminus X_{\rho})<0.01$, $Y \cap X_{\rho} \neq \emptyset$ and thus, 
$$\vol(Y) \ge \eta^{d_S} \ge \eta^{d_H}.$$

\end{proof}

\begin{thm}[Margulis Inequality for $F_Y$]\label{Margulis_inequality_F}
 Let $H\subseteq S\subsetneq G$ be an intermediate subgroup and  $Y=Sy$ a closed $S$-orbit. Let $F_{\lambda, Y} \,(\lambda \ge 1)$ denote the Margulis functions associated to $Y$. For any $0<c<1$, there exists $t=t_{F,c}>0$ such that there exists global constants $\lambda_1 \ge 1$ and $E_1>0$ such that the following holds for any closed orbit $Y=Sx\,(H \subseteq S \subsetneq G)$ and its corresponding Margulis function $F_Y :=F_{\lambda_1,Y}$ :
 $$A_{2,t}F_{Y} \le c F_{Y} + E_1\vol(Y).$$
\end{thm}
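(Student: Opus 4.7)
The plan is to split $F_Y = f_Y + \lambda_1 \vol(Y) h$ and control $A_{2,t}$ on each summand separately, then absorb cross terms by choosing $\lambda_1$ large. For the height piece, Theorem~\ref{Thm:Margulis_height} applied with $c/2$ in place of $c$ gives $t_1 > 0$ and $B_{t_1}>0$ so that
$$A_{2,t}\bigl(\lambda_1 \vol(Y) h\bigr) \le (c/2)\lambda_1 \vol(Y) h + \lambda_1 B_{t_1}\vol(Y)$$
for all $t \ge t_1$; the additive term is already of the desired form $E\vol(Y)$, and the multiplicative term will contribute to $cF_Y$.

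The main work is to bound $A_{2,t}f_Y$. The idea is to view each $v \in I_Y(x) \subset V_{S} \subset \Lie(G)$ as a vector in $\Lie(G)$ and invoke Lemma~\ref{Lem:General_LA_lemma} on the adjoint representation of $G$ on $\Lie(G) = \Lie(S) \oplus V_{S}$: for $c_0 < 1$ and $t$ sufficiently large,
$$\frac{1}{m_U(B_2^U)}\int_{B_2^U}\|\Ad(a_t u) v\|^{-\delta_F}\,dm_U(u) < c_0\|v\|^{-\delta_F},$$
and summing over $v \in I_Y(x)$ contracts $f_Y(x)$ by $c_0$. To link this to $A_{2,t}f_Y(x) = \frac{1}{m_U(B_2^U)}\int_{B_2^U} f_Y(a_t u x)\,dm_U(u)$, one matches each sheet $w \in I_Y(a_t u x)$ with a precursor $v \in I_Y(x)$ via the adjoint action: since $a_t \in S$ preserves $V_{S}$ under $\Ad$, the $V_{S}$-projection of $\Ad(a_t u) v$ corresponds to $w$ up to an error coming from $\Ad(u)$ failing to preserve $V_{S}$. ``Matched'' sheets inherit the $c_0$-contraction, while ``orphaned'' sheets in $I_Y(a_t u x)$ --- those arising because $u$ need not normalize $S$ --- are controlled in total using the uniform count $\#I_Y(\cdot) \le C_1 \vol(Y)$ from Proposition~\ref{Prop:Number_of_Nearby_Sheets} and the log-continuity from Proposition~\ref{Proposition:Log_continuity_F}. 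This bookkeeping produces
$$A_{2,t}f_Y(x) \le c_0\, f_Y(x) + C_{\mathrm{sh}}\vol(Y) h(x) + C_{\mathrm{sh}}'\vol(Y)$$
for global constants $C_{\mathrm{sh}},C_{\mathrm{sh}}'>0$ depending only on $t$ and $\dim(G)$.

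Combining the two bounds, one chooses $\lambda_1 \ge 2C_{\mathrm{sh}}/c$ so that $C_{\mathrm{sh}}\vol(Y) h$ is absorbed into $(c/2)\lambda_1 \vol(Y) h$; setting $E_1 := C_{\mathrm{sh}}' + \lambda_1 B_{t_1}$, $t_{F,c} := \max(t_0, t_1)$ (with $c_0 \le c$), we obtain the desired Margulis inequality. The main obstacle is the sheet correspondence: even though $v \in I_Y(x)$ guarantees $\exp(v) x \in Y$, the translate $a_t u \exp(v) x = \exp(\Ad(a_t u) v)(a_t u x)$ lies on $a_t u Y$ rather than $Y$ (since $u$ need not normalize $S$), so $\Ad(a_t u) v$ does not directly give a sheet of $Y$ at $a_t u x$. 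Carefully handling this mismatch --- matching actual sheets of $Y$ near $a_t u x$ with $V_S$-projected images of $\Ad(a_t u) v$, and controlling the orphan sheets by volume and height via Propositions~\ref{Prop:Number_of_Nearby_Sheets} and~\ref{Proposition:Log_continuity_F} --- is the technical heart of the argument.
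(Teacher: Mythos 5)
Your proposal follows the same architecture as the paper's proof: decompose $F_Y = f_Y + \lambda_1\vol(Y)h$; control the height piece via Theorem~\ref{Thm:Margulis_height}; contract $f_Y$ by applying Lemma~\ref{Lem:General_LA_lemma} to the adjoint representation of $G$ on $\Lie(G)=\Lie(S)\oplus V_S$ (with $\delta_F\le\delta_0/2$, which is why $\delta_F$ is defined as $\min\{\delta_0/2,1/\kappa\}$); take $t_{F,c}$ to be the max of the two thresholds; and close the inequality by choosing $\lambda_1$ large enough to fold the $\vol(Y)h$ cross term into the $(c/2)\lambda_1\vol(Y)h$ contraction. The choice $\lambda_1 \approx 2C_{\rm sh}/c$ you describe matches the paper's $\lambda_1 = 2(C_5+\sigma_h\tau^{-1})/c$ exactly.

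The one step you flag as the technical heart and leave as a sketch --- matching sheets in $I_Y(a_tux)$ with sheets in $I_Y(x)$ and bounding the leftover --- is precisely the step the paper handles by reusing the split from the proof of Proposition~\ref{Proposition:Log_continuity_F}. Concretely, for $g = a_tu \in K_t := a_tB_2^U$, the paper partitions $I_Y(gx)$ by norm at $\varepsilon := R_{K_t}^{-1}\varepsilon_h h(x)^{-\kappa}$: vectors with $\|v\|<\varepsilon$ are pulled back via $v\mapsto g^{-1}.v$ to $I_Y(x)$ and then averaged away by the Linear Algebra Lemma under the $B_2^U$ integral, while vectors with $\|v\|\ge\varepsilon$ are crudely bounded by $\#I_Y(gx)\cdot\varepsilon^{-\delta_F}$, which Proposition~\ref{Prop:Number_of_Nearby_Sheets} turns into $\ll \vol(Y)\,h(x)$. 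This is the same accounting as your ``matched vs.\ orphaned'' picture, but the partition is by norm, not by whether $u$ normalizes $S$; the more elaborate ``$V_S$-projection of $\Ad(a_tu)v$'' correspondence you propose would require error estimates that the norm split makes unnecessary. As written, your argument asserts rather than derives the key bound $A_{2,t}f_Y\le c_0 f_Y + C_{\rm sh}\vol(Y)h + C_{\rm sh}'\vol(Y)$; to complete it, carry out the norm split from Proposition~\ref{Proposition:Log_continuity_F} inside the $B_2^U$ integral, as the paper does.
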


\begin{proof}
 We have that $\Lie(G)=\Lie(S)\oplus V_S$ where $V_S$ is $\Ad(H)$-invariant, but has no $\Ad(H)$-invariant vectors. Since $\delta_F \le \delta_0/2 < \delta_0$, by the Linear Algebra Lemma (Lemma \ref{Lem:General_LA_lemma}) there exists $t_c '>0$ so that 
 $$\frac{1}{m_U(B_2^U)}\int_{B_2 ^{U}}\frac{1}{\|a_{ t} u.v\|^{\delta_F}}\,dm_U(u)< \frac{c}{\|v\|^{\delta_F}}$$
for every $v\in V_S$ and $t\ge t_c'$. 

On the other hand, by Theorem \ref{Thm:Margulis_height} there exists $t_c''>0$ such that for all $t \ge t_c''$, there exists absolute constant $B_t$ (depending on $t$) so that
\begin{equation}
A_{2,t}h \le \frac{c}{2}h+B_t.\end{equation}

Let $t_c'$ and $t_c''$ be as above and take $t = t_{F,c} = \max\{t_c', t_c''\}$.

We first find a bound on $A_{t}f_Y$. Then we pick a particular value for $\lambda$ and  combine the first bound with the bound from Theorem \ref{Thm:Margulis_height} to reach the desired bound on $F_Y$.

Fix compact set $K_t=a_{t}B_1^{U}$ and let $R_{K_t}\ge 1$ be a constant so that
$$R^{-1}_{K_t}\|v\|\le \|g.v\|\le R_{K_t} \|v\|$$
for every $g\in K_t$ and $v\in V_S$.

If $I_Y(gx)$ is empty for every $g \in K_t$, then $F_Y$ is just a constant multiple of the height function $h$ and thus by (1), 
\begin{align*}
A_{2,t}F_Y &= A_{2,t} (\lambda \vol(Y)h) \le \lambda \vol(Y) \left(\frac{c}{2}h+B_{t}\right) \\ 
&\le c (\lambda \vol(Y) h)+ \lambda \vol(Y) B_{t} \le cF_Y+ \lambda B_t \vol(Y).
\end{align*}

Suppose that $I_Y(gx)$ is not empty. Set $\varepsilon = R_{K_t}^{-1} \varepsilon_h h(x)^{-\kappa}$.
Note that we can write
$$f_Y(gx)=\sum_{v\in I_Y(gx),\|v\|<\varepsilon}\|v\|^{-\delta_F} + \sum_{v\in I_Y(gx),\|v\|\ge \varepsilon}\|v\|^{-\delta_F}.$$

and by the calculation from Proposition \ref{Proposition:Log_continuity_F}, we obtain a bound of the form,
$$ f_Y(gx)\le \sum_{v'\in I_Y(x)}\| g.v'\|^{-\delta_F} + (C_5\vol(Y)+ \sigma_h)h(x) $$
where $C_5 = C_1 (R_{K_t}\varepsilon_h^{-1})^{\delta_F}$ and $\sigma_h = \sigma_h(K_t)$ is the constant from the log continuity property of $h$. Integrating over $B_2 ^{U}$ yields
\begin{align*}
    &\frac{1}{m_U(B_2^U)}\int_{B_2 ^{U}}f_Y(a_tux)\,dm_U(u) \\
    &\le \sum_{v'\in I_Y(x)}\frac{1}{m_U(B_2^U)}\int_{B_2 ^U}\| a_t u.v'\|^{-\delta_F}\,dm_U(u) + (C_5\vol(Y)+\sigma_h)h(x)
\end{align*}
and since $t\ge t_c'$ by Lemma \ref{Lem:General_LA_lemma}, $$\sum_{v'\in I_Y(x)}\frac{1}{m_U(B_2^U)}\int_{B_2 ^{U}}\| a_t u.v'\|^{-\delta_F}\,dm_U(u) < c\sum_{v'\in I_Y(x)} \| v\|^{-\delta_F}.$$

To summarize, we get the bound
\begin{equation}
\label{eqn:f_Ybound}
\frac{1}{m_U(B_2^U)}\int_{B_2 ^{U}}f_Y(a_tux)\,dm_U(u)\le cf_Y(x) +(C_5+\sigma_h \tau^{-1})\text{vol}(Y) h(x)\end{equation}
where $\tau$ is as in Lemma \ref{Lem:Lower_bound_on_volume}.

Combining equation (1) and equation (\ref{eqn:f_Ybound}) together we have
 \begin{align*}
  A_{2,t} F_Y &\le c \cdot f_Y +(C_5+\sigma_h\tau^{-1})\vol(Y) \cdot h +\lambda \vol(Y) \left(\frac{c}{2} \cdot h+ B_t \right)\\
  &=c \cdot f_Y + \left(C_5+\sigma_h\tau^{-1}+ \frac{\lambda c}{2}  \right) \vol(Y) \cdot h +\lambda B_t \vol(Y).
 \end{align*}

Now choose $\lambda_1:= 2(C_5+\sigma_h\tau^{-1})/c$ so that $\left(C_5+\sigma_h\tau^{-1}+ \frac{\lambda_1 c}{2} \right) = \lambda_1 c$ and we get the desired result,
$$A_{2,t}F_Y \le cF_Y + E_1 \vol(Y)$$

where $E_1 =  \lambda_1 B_t = 2(C_5+\sigma_h\tau^{-1})B_t/c$. Note that $\lambda_1 \ge 1$, since $C_5$ and $\sigma_h$ are constants larger than 1 and $\tau$ and $c$ are constants smaller than 1. Also note that both $\lambda_1$ and $E_1$ are constants independent of $Y$.
\end{proof}

For the remainder of the paper, $F_Y$ will refer to the Margulis function $F_{\lambda_1,Y}$ with fixed $\lambda_1 = 2(C_5+\sigma_h\tau^{-1})/c$ as in Theorem \ref{Margulis_inequality_F}.

\begin{cor}[Exponential decay]\label{Cor:exponential_decay_F}
 There exists global constants $C_F>0$ and $E_2>0$ such that for any closed orbit $Y$ and for any $t \ge t_F$ (where $t_F := t_{F,1/2}$ defined as in Theorem \ref{Margulis_inequality_F}),
$$(A_{t} F_Y)(x)\le \frac{C_F}{2^{t/t_F}}F_Y(x)+E_2\vol(Y)$$ 
\end{cor}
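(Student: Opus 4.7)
The plan is to directly invoke the abstract exponential decay result, Theorem \ref{Thm:Abstract_exp_decay}, applied to the Margulis function $F_Y$. Both hypotheses of that theorem are already established earlier in the paper: the log continuity of $F_Y$ is Proposition \ref{Proposition:Log_continuity_F}, and the one-step Margulis inequality is Theorem \ref{Margulis_inequality_F}.

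Concretely, I would fix $c = 1/2$ and take $t_F := t_{F,1/2}$ from Theorem \ref{Margulis_inequality_F}, so that for every closed $S$-orbit $Y$ (with $H \subseteq S \subsetneq G$) we have
\[
(A_{2,t_F} F_Y)(x) \,\le\, \tfrac{1}{2} F_Y(x) + E_1 \vol(Y)
\]
for all $x \in X$. This verifies the second hypothesis of Theorem \ref{Thm:Abstract_exp_decay} with $c = 1/2$, $t_c = t_F$, and additive constant $b = E_1\vol(Y)$. The first hypothesis, log continuity on any fixed compact $K \subseteq G$ with a constant $\sigma_F(K)$ that is independent of the orbit $Y$ and the parameter $\lambda_1$, is exactly the content of Proposition \ref{Proposition:Log_continuity_F}.

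Applying Theorem \ref{Thm:Abstract_exp_decay} then yields
\[
(A_t F_Y)(x) \,\le\, C \cdot (1/2)^{t/t_F} F_Y(x) + B
\]
for all $t \ge t_F$, where the constants $C$ and $B$ are those produced by the abstract proof. Tracing through Step 1 and Step 2 of that proof, $C$ depends only on the log continuity constant $\sigma_1 = \sigma_F(\{a_t : 0 \le t \le t_F\})$ and on $m_U(B_2^U)$, while $B$ has the form $B = \sigma_1 m_U(B_2^U) \cdot \bigl(b \sum_{j \ge 0} (1/2)^j\bigr)$, which is a fixed multiple of $b = E_1\vol(Y)$. Setting $C_F := C$ and $E_2 := B / \vol(Y)$ gives the stated bound.

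The only point requiring a word of care, and the step I would highlight most explicitly, is the uniformity of $C_F$ and $E_2$ over all choices of intermediate subgroup $S$ and closed orbit $Y = Sy$. This uniformity is not automatic from the abstract theorem alone; it relies on the fact that (i) the Margulis inequality constants $t_F$ and $E_1$ from Theorem \ref{Margulis_inequality_F} do not depend on $Y$ once $\lambda_1$ has been fixed, and (ii) the log continuity constant $\sigma_F(K)$ from Proposition \ref{Proposition:Log_continuity_F} is also independent of $Y$ and of $\lambda_1$. Since Lemma \ref{finitely_many_intermediate_subgroups} ensures there are only finitely many intermediate subgroups $S$, and since all constants appearing in the two inputs depend only on $G$, $H$, and $\Gamma$, the abstract theorem outputs constants $C_F$ and $E_2$ that work simultaneously for all closed intermediate orbits $Y$.
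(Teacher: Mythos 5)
Your proof is correct and follows the same route the paper takes: apply Theorem \ref{Thm:Abstract_exp_decay} with the two hypotheses supplied by Proposition \ref{Proposition:Log_continuity_F} (log continuity) and Theorem \ref{Margulis_inequality_F} (Margulis inequality with $c=1/2$, $b=E_1\vol(Y)$), and observe that the resulting constants are uniform in $Y$ because the inputs are. This matches the paper's short proof, and your explicit remarks on $Y$-independence of $\sigma_F(K)$, $t_F$, and $E_1$ are precisely the point the paper highlights.
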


\begin{proof}
The result follows from Theorem \ref{Thm:Abstract_exp_decay}, Proposition \ref{Proposition:Log_continuity_F}, and Theorem \ref{Margulis_inequality_F}. Especially, the fact that $C_F$ and $E_2$ are global constants, independent of $Y$ follows from the fact that the log continuity constants for $F_Y$ depends only on the compact set and is independent on the choice of $Y$ (see Proposition \ref{Proposition:Log_continuity_F}).
\end{proof}

The following is following result is standard, see \cite[Lemma 7.3]{Isolations}  or \cite[Lemma 11.1]{MR3418528}.
\begin{prop}[Margulis function on average]
 Let $H\subseteq S\subsetneq G$ denote an intermediate subgroup and $Y=Sy$ be a closed $S$-orbit. Let $F_Y$ denote the associated Margulis function from Theorem \ref{Margulis_inequality_F}. Let $\mu$ be an $A$-ergodic $U$-invariant measure with $\mu(Y)=0$. Then $$F_Y\in L^1(\mu).$$
\end{prop}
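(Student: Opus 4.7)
The plan is to iterate the Margulis inequality from Theorem~\ref{Margulis_inequality_F}, truncate $F_Y$ to a bounded function, and combine the joint $A$- and $U$-invariance of $\mu$ with dominated convergence to bound the truncation uniformly in the truncation level; a monotone convergence argument then yields $F_Y \in L^1(\mu)$.

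Fix $c = 1/2$ and $t = t_F$ as in Corollary~\ref{Cor:exponential_decay_F}, so Theorem~\ref{Margulis_inequality_F} gives $A_{2,t}F_Y \le \tfrac{1}{2}F_Y + E_1\vol(Y)$ pointwise. An induction on $n$ (mirroring Step~1 in the proof of Theorem~\ref{Thm:Abstract_exp_decay}) yields
$$A_{2,t}^n F_Y \;\le\; 2^{-n}F_Y + 2E_1\vol(Y) \qquad \text{for every } n \ge 1.$$
For each $N>0$ set $F_N := \min(F_Y, N)$; since $\mu$ is a probability measure, $F_N \in L^\infty(\mu) \subseteq L^1(\mu)$. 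Because $\mu$ is $A$-invariant (by $A$-ergodicity) and $U$-invariant, Fubini together with these invariances gives $\int A_{2,t}\phi\,d\mu = \int\phi\,d\mu$ for every bounded measurable $\phi$, and iterating, $\int A_{2,t}^n\phi\,d\mu = \int\phi\,d\mu$. Applying this with $\phi = F_N$ and combining it with the pointwise estimate
$$A_{2,t}^n F_N \;\le\; \min\bigl(A_{2,t}^n F_Y,\,N\bigr) \;\le\; \min\bigl(2^{-n}F_Y,\,N\bigr) + 2E_1\vol(Y),$$
where the last step uses the elementary inequality $\min(a+b,N) \le \min(a,N)+b$ for $a,b,N\ge 0$, produces
$$\int F_N\,d\mu \;\le\; \int \min\bigl(2^{-n}F_Y,\,N\bigr)\,d\mu + 2E_1\vol(Y).$$

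Since $\#I_Y(x) < \infty$ for every $x$ by Proposition~\ref{Prop:Number_of_Nearby_Sheets}, $F_Y$ is finite-valued everywhere, so $\min(2^{-n}F_Y(x),\,N) \to 0$ pointwise as $n\to\infty$ and is dominated by the $\mu$-integrable constant $N$. Dominated convergence forces the first integral on the right to vanish, yielding $\int F_N\,d\mu \le 2E_1\vol(Y)$ uniformly in $N$. Monotone convergence as $N\to\infty$ then gives $\int F_Y\,d\mu \le 2E_1\vol(Y) < \infty$, which is the desired conclusion. The only non-routine step is the truncation device that upgrades the pointwise drift inequality to a uniform integral bound; the hypothesis $\mu(Y)=0$ does not enter the $L^1$ estimate itself (it will be used in later applications to ensure that $\mu$ is genuinely transverse to $Y$).
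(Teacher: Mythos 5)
Your proof is correct, and it takes a genuinely different and in fact simpler route than the paper. The paper's argument runs through the Birkhoff ergodic theorem applied to the truncations $F_k=\min(F_Y,k)$ along $a_{nt}$-orbits, together with Moore's ergodicity theorem, a Fubini step to locate a generic $x_0$, and Egoroff's theorem to upgrade a.e.\ convergence to uniform convergence on a set of measure $>1/2$ in $B_1^U$; only then does it feed in the exponential-decay estimate from Corollary~\ref{Cor:exponential_decay_F}. Your argument bypasses all of this: you observe that joint $A$- and $U$-invariance of $\mu$ alone already forces $\int A_{2,t}^n\phi\,d\mu=\int\phi\,d\mu$ for bounded $\phi$, iterate the drift inequality $A_{2,t}F_Y\le \tfrac12 F_Y + E_1\vol(Y)$ to get $A_{2,t}^nF_Y\le 2^{-n}F_Y+2E_1\vol(Y)$, sandwich the truncation via $A_{2,t}^nF_N\le\min(2^{-n}F_Y,N)+2E_1\vol(Y)$, and kill the first term by dominated convergence as $n\to\infty$ (which uses the everywhere-finiteness of $F_Y$, guaranteed by Proposition~\ref{Prop:Number_of_Nearby_Sheets}). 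The monotone convergence step at the end is the same in both proofs. What your route buys is conceptual economy: no ergodic theorem, no Egoroff, and in particular no use of $A$-\emph{ergodicity} --- $A$-invariance suffices --- and, as you correctly note, the hypothesis $\mu(Y)=0$ is never used. The paper's route is closer to the standard template in the literature (cf.\ \cite[Lemma 11.1]{MR3418528}), but your argument proves a formally stronger statement with fewer hypotheses and is, in my view, the cleaner of the two.
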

\begin{proof}
In this proof we will drop the subscript $Y$ in $F_Y$ for simplicity.
For $k\in \mathbb N$, let $F_k:=\min (F,k)$. Take $t$ to be $t_F$, the constant obtained from Corollary \ref{Cor:exponential_decay_F}.
 
By Moore's ergodicity theorem, we have that the action of $A=\{a_t:t\in \mathbb R\}$ is ergodic $X$. Then, by the Birkhoff ergodic theorem, for $\mu$-a.e. $x\in X$ and $k\in \mathbb N$, 
 $$\lim_N\frac{1}{N}\sum_{n=1} ^N F_k(a_{nt}x)=\int F_k \,d\mu.$$

There exists some $x_0\in X$ such that for $m_U$-a.e. $u\in B_1 ^U$, 
 $$\lim_N\frac{1}{N}\sum_{n=1} ^N F_k(a_{nt}ux_0)=\int F_k \,d\mu.$$
 
Thus, by Egoroff's theorem, for each $k\in \mathbb N$ there exists a subset $E_k\subseteq B_1 ^U$ with $m_U(E_k)>\frac{1}{2}$ and $N_k\in \mathbb N$ such that for every $N>N_k$ and $u\in E_k$, 
$$\frac{1}{N}\sum_{n=1} ^N F_k(a_{nt}ux_0)>\frac{1}{2}\int F_k \,d\mu.$$

Integrate this inequality over $B_1 ^U$ to obtain
$$\frac{1}{N}\sum_{n=1} ^N \int_{B_1 ^U}F_k(a_{nt}ux_0)dm_U(u)> \frac{1}{2}\int F_k \,d\mu.$$

By Corollary \ref{Cor:exponential_decay_F}, for all $n \in \mathbb{N}$,
$$\int_{B_1 ^U}F_k(a_{nt}ux_0)dm_U(u)\le \int_{B_1 ^U}F(a_{nt}ux_0)dm_U(u)<\frac{C}{2^n}F(x_0)+b$$
where $C=C_F$ and $b =E_2\vol(Y)$.

Choose $n_0$ so that $\frac{1}{2^{n_0}}F(x_0)\le 1$. Then for each $n\ge n_0$ and $N>\max(N_k,kn_0)$, 
\begin{align*}
    \frac{1}{2}\int F_k \,d\mu&<\frac{1}{N}\sum_{n=1} ^N \int_{B_1 ^U}F_k(a_{nt}ux_0)dm_U(u)\\
    &=\frac{1}{N}\sum_{n=1} ^{n_0} \int_{B_1 ^U}F_k(a_{nt}ux_0)dm_U(u)+\frac{1}{N}\sum_{n=n_0+1} ^N \int_{B_1 ^U}F_k(a_{nt}ux_0)dm_U(u)\\
    &\le\frac{n_0k}{N} + \frac{1}{N}\sum_{n=n_0+1} ^N (\frac{C}{2^n} F(x_0)+b)\\
    &\le 1 + \frac{1}{N}\sum_{n=n_0+1} ^N (C+b) = C+b+1.
\end{align*}

Thus, 
$$\int F_k\,d\mu\le 2(C+b+1).$$
Taking $k\to\infty$ and using the monotone convergence theorem, we have $F\in L^1(\mu)$.
 
\end{proof}

\section{Isolation of closed orbits}

In this section, we prove Theorem \ref{Thm:Isolation_in_distance} and Theorem \ref{finiteness_closed_orbits_bounded_volume}. Results in this section are analogous to Section 10 of \cite{Isolations}.

\begin{proof}[Proof of Theorem \ref{Thm:Isolation_in_distance}]
We shall prove the following: for any two distinct closed $S$-orbits $Y=Sy$ and $Z=Sz$ ($H \subseteq S \subsetneq G$) of finite volume,
$$\dist(Y \cap K, Z)\gg_K \vol(Y)^{-1/\delta_F}\vol(Z)^{-1/\delta_F}$$
where $K$ is a compact subset of $X$ and $\delta_F$ is as in Definition \ref{def:Margulis_function}. Recall that $\delta_F$ is a global constant only depending on $G$ and $H$ (and thus, independent of the choice of $\Gamma$, see Remark \ref{dimG}).

Let $m_Y$ denote the Haar probability measure on $Y$. Since $m_Y$ is an $A$-ergodic $S$-invariant probability measure, $m_Y(A_tF_Z) = m_Y(F_Z)$. Thus,
by integrating the Margulis inequality $$ A_t(F_Z) < c F_Z + E_2\vol(Z)$$ ($c$ is some positive constant smaller than 1) from Corollary \ref{Cor:exponential_decay_F} over $Y$, we get $$m_Y(F_Z) \le \frac{E_2}{1-c} \vol(Z).$$ 
Since $K$ is compact, by log continuity of $F_Z$, there exists $\sigma = \sigma_{F_Z}(K) >1$ such that for any $x \in X$ and $g \in B_{\epsilon}^G$,
$$F_Z(x) \le \sigma F_Z(gx).$$
Recall that the log continuity coefficients for $F_Z$ is independent of closed orbit $Z$ that only depends on the compact set that $g$ belongs to (Proposition \ref{Proposition:Log_continuity_F}); $\sigma$ is a global constant that only depends on $K$. 

Now since $K$ is compact, $\epsilon = \epsilon_K :=\min_{x \in K}\inj(x) >0$. For any point $y \in Y \cap K$,
\begin{align*}
    f_Z(y) \le F_Z(y) &\le \frac{1}{m_Y(B_{\epsilon}^S(y))} \int_{g \in B_{\epsilon}^S(e)} \sigma F_Z(gy)dm_Y(gy)\\ 
    &\le \frac{1}{m_Y(B_{\epsilon}^S(y))}\sigma m_Y(F_Z) \le \frac{\sigma}{\epsilon^{d_S}}\cdot \frac{E_2}{1-c} \vol(Y)\vol(Z).
\end{align*}
Lastly, we observe that $\dist(y, Z)^{-\delta_F} \ll  f_Z(y)$. If $I_Z(y)$ is non-empty, then $$f_Z(y) =\sum_{v\in I_Z(y)} \|v\|^{-\delta_F} \ge \dist(y, Z)^{-\delta_F}.$$ If $I_Z(y)$ is empty, then $\dist(y, Z) > \varepsilon_h h(y)^{-\kappa}$ and so $$f_Z(y)=h(y) \ge h(y)^{\kappa \delta_F} \ge \varepsilon_h^{\delta_F} \dist(y, Z)^{-\delta_F}.$$ (Here, we are using that $h \ge 1$ and $\delta_F:=\min\{\delta_0/2, 1/\kappa\} \le 1/\kappa$.)
\end{proof}

\begin{proof}[Proof of Theorem \ref{finiteness_closed_orbits_bounded_volume}] 
We shall prove the following: there exists a global constant $C_6>0$ such that for any 
intermediate subgroup $H\subseteq S\subsetneq G$,
$$\#\{Y: Y=Sy\text{\rm{ is a closed }\it}S\text{\rm{-orbit and }\it}\vol(Y)\le R\}<C_6 R^{d_G/\delta_F}$$
where $d_G$ is the dimension of $\Lie(G)$ and $\delta_F$ is as in Definition \ref{def:Margulis_function}.

We define constants $\rho >0$ and $\eta >0$ as in Lemma \ref{Lem:Lower_bound_on_volume}. Let $\rho>0$ be a constant such that $m_Y(X\setminus X_{\rho})<0.01$ for every closed $S$-orbit $Y=Sy$ ($H\subseteq S\subsetneq G$ and $y\in Y$) and let $\eta\asymp \rho$ be a constant so that the map $g\mapsto gx$ is injective for all $x\in X_\tau$ and all
$$g\in \text{\rm{Box}}(\eta):= \exp(B_{\eta}^{\Lie(S)} )\exp(B_{\eta}^{V_{S}})$$

(here, $\Lie(G) = \Lie(S) \oplus V_S$ is the Lie algebra decomposition).
Then, for any connected component $C$ of $Y \cap \text{\rm{Box}}(\eta)z$ with $z \in X_\rho$, there exists some $v\in V_{S}$ such that
$$C= C_v:=\exp(B_{\eta}^{\Lie(S)})\exp(v)z.$$
For $R>0$, let
$$\mathcal Y(R)=\{Y: Y=Sy\text{ is closed } S \text{-orbit and }R/2<\text{Vol}(Sy)\le R\}.$$

By Theorem \ref{Thm:Isolation_in_distance}, for any distinct connected components $C_v$ and $C_{v'}$ in $\mathcal Y(2^k) \cap \text{\rm{Box}}(\eta)z$, we have that
$$\|v-v'\|\gg_\rho 2^{-2k/\delta_F}.$$

The cardinality of any $2^{-2k/\delta_F}$-separated set in $B_{\eta}^{V_S}$ is up to multiplicative constant, 
$$(2^{2k/\delta_F})^{(d_G-d_S)}.$$

Since $\vol(\text{\rm{Box}}(\eta))=\eta ^{d_G},$ we can cover $X_\rho$ by $M=O(\eta^{-d_G})$ many sets of the form $\text{\rm{Box}}(\eta)z$. Choose such a cover
$\{\text{\rm{Box}}(\eta)z_j:j=1,...,M\}$.

Then,
\begin{align*}
        \#\mathcal Y(2^k)&\le 2^{-k+1}\sum_{Y \in \mathcal Y(2^k)} \vol(Y)\\
          &\ll2^{-k+1} \sum_{j=1} ^M \sum_{C_v\in \text{\rm{Box}}(\eta)z_j}\vol(C_v).
\end{align*}
Since (1) $\vol(C_v) = \eta^{d_S} \ll 1$ for each $C_v$, (2) $\#\{C_v \in \text{\rm{Box}}(\eta)z_j \}\ll (2^{2k/\delta_F})^{(d_G-d_S)}$ for each $\text{\rm{Box}}(\eta)z_j$, and (3) $M = O(1)$, we have
$$2^{-k+1} \sum_{j=1} ^M \sum_{C_v\in \text{\rm{Box}}(\eta)z_j} \vol(C_v)\ll2^{2k(d_G-d_S)/\delta_F-k+1}.$$

Recall that $\vol(Sy)\ge \eta^{d_S}$ since the volume of the orbit needs to contain at least one connected component $C_v$ in some $\text{\rm{Box}}(\eta)z_j$. Let $n_0=\lfloor d_S \log_2(\eta)\rfloor$ and $n_R=\lceil \log_2(R)\rceil$.
Since 
$$\{Sy: Sy \text{ is closed and }\vol(Sy)\le R\}\subseteq \bigcup_{k=n_0} ^{n_R}\mathcal Y(2^k),$$
we get
$$\#\{Y: Y=Sy\text{\rm{ is a closed }\it}S\text{\rm{-orbit and }\it}\vol(Y)\le R\} \le \sum_{k=n_0} ^{n_R}\#\mathcal Y(2^k)\ll R^{{d_G}/\delta_F}.$$
\end{proof}

\section{Proof of the Main Theorem}
In this section we prove Theorem \ref{MainThm}.

\begin{proof}[Proof of Theorem \ref{MainThm}]
For each point $x \in X$, our choice of $T_x$ will be $T_x := h(x)^{1/\delta_F}$. Note that $h(x)$ is bounded in the compact part of $X$ and thus, $T_x$ can be chosen uniformly within a compact subset of $X$.

We fix a point $x \in X$. Let $(T, R)$ be a pair of real numbers such that $T>T_x$ and $R>2$ and suppose that $x$ is  $(R, 1/T)$-Diophantine with respect to $H$. Our final goal will be to show that there exist absolute constants $D=D(\dim(G))$, $A=A(G/\Gamma, H)$, and $C=C(G/\Gamma, H)$ independent of $x$, $R$, and $T$, such that condition (2) of Theorem \ref{MainThm} holds: for all $t \ge A\log T$, $$m_{U}\left(\left\{u\in B_1 ^U: a_{t}ux \ \text{is not $(R, R^{-D}, R^{-D})$-Diophantine} \right\}\right)< CR^{-1}.$$

Recall that $x' \in X$ is $(R,R^{-D}, R^{-D})$-Diophantine with respect to $H$ if and only if 
\begin{itemize}
    \item[(1)] $\inj(x) \ge R^{-D}$ and
    \item[(2)] for all intermediate subgroups $H\subseteq S\subsetneq G$ and all closed $S$-orbit $Y=Sx'$ with $\vol (Y)\le R$, we have $\dist(x,Y) \ge R^{-D}$.
\end{itemize}

\textbf{Step 1: Recurrence to the compact part.}

First, we show that there exists $D_1=D_1(\dim(G)) >0$ and $A_1=A_1(G/\Gamma, H) >0$ such that for all $D \ge D_1$ and $A \ge A_1$, the following is true: for all $t \ge A \log T$,
$$m_U(\{u\in B_1 ^U: \inj(a_{t}ux) < R^{-D} \}) \ll R^{-1}.$$

Take $A_1 = \delta_F t_h/\log 2$. Then, for any $t \ge A \log T$,
$$t \ge A \log T \ge A_1 \log T_x = A_1 \log (h(x)^{1/\delta_F}) \ge A_1/\delta_F \cdot \log 2 = t_h$$ and so by Corollary \ref{Corollary:Exponential_decay_height},
\begin{align*}
(A_{t}h)(x) & =\int _{B_1 ^U} h(a_{t}u x)\,dm_U(u) \le \frac{C_h}{2^{t/t_{h}}} h(x)+B_h \\
& \le \frac{C_h}{2^{A \log T/t_{h}}} h(x)+B_h \\
& \le \frac{C_h}{2^{A_1 \log T_x/t_{h}}} h(x)+B_h \\
& = \frac{C_h}{2^{(A_1/\delta_Ft_{h}) \cdot \log(h(x))}} h(x)+B_h  = C_h+B_h.
\end{align*}

For $x' \in X$, if $\inj(x') \le R^{-D}$, then 
by Proposition \ref{Proposition:comparing_height_and_inj_radius},
$$h(x') \ge C_2^{-1/m} \cdot \inj(x)^{-1/m} > C_2^{-1/m} \cdot R^{D/m}.$$
Applying this observation to points $a_{t}ux$ and using Chebyshev's theorem we obtain,
\begin{align*}
&m_U(\{u\in B_1 ^U: \inj(a_{t}ux) \le R^{-D}\}) \\
& \le m_U(\{u\in B_1 ^U: h(a_{t}ux) > C_2^{-1/m} \cdot R^{D/m}\}) \\
& < C_2^{1/m} R^{-D/m}\cdot (A_{t}h)(x) \\
& \le C_7 R^{-D/m}
\end{align*}
where $C_7 := C_2^{1/m}(C_h+B_h)$ is a global constant. Recall that $m$ is a constant that depends only on $\dim(G)$ (see proof of Lemma \ref{dimG}). Thus, by taking $D_1 = m$, we get the desired result.

\textbf{Step 2: Avoidance principle.} 
Let $H\subseteq S\subsetneq G$ be an intermediate orbit. 
First we shall fix a single closed $S$-orbit $Y=Sy$ with volume less that $R$, and show that there exists $D_2=D_2(\dim(G)) >0$ and $A_2=A_2(G/\Gamma, H) >0$ such that for all $D \ge D_2$ and $A \ge A_2$, the following is true: for all $t \ge A \log T$,
$$m_U(\{u\in B_1 ^U: \text{\rm{dist}}(a_{t}ux, Y) < R^{-D} \}) \ll R^{-1}.$$
Then, we will use Lemma \ref{finitely_many_intermediate_subgroups} and Corollary \ref{finiteness_closed_orbits_bounded_volume} to piece together the results for different choices of $Y$: we show that there exists $D_3=D_3(\dim(G)) > D_2$ such that for all $D \ge D_3$ and $A \ge A_2$, for all $t \ge A \log T$,
$$m_U(\{u\in B_1 ^U: \dist(a_{t}ux, Y) < R^{-D} \text{ for some } Y \in O_R \}) \ll R^{-1}$$
(here, $O_R = \{Y=Sy : H\subseteq S\subsetneq G, Y \text{ is closed}, \vol(Y)<R \}$ is the set of all closed orbits of volume less than $R$). 

\textbf{Step 2.1: Avoiding single closed orbit.}

Construct Margulis functions $f_Y$ and $F_Y$ with respect to $Y$ as in Section 6. If $I_Y(x)$ is empty, then $$f_Y(x)=h(x) \le T_x^{\delta_F} \le T^{\delta_F}.$$ Otherwise, we have 
$$f_Y(x)= \sum_{v\in I_Y(x)}\|v\|^{\delta_F}\le \dist(x,Y)^{-\delta_F}\#I_Y(x).$$

Since $x$ is $(R, 1/T)$--Diophantine with respect to $H$, $x$ is $(R, 1/T)$--Diophantine with respect to $Y$ and thus, $\dist(x,Y) \ge 1/T$. Combining with Proposition \ref{Prop:Number_of_Nearby_Sheets}, we have $$\dist(x,Y)^{-\delta_F}\#I_Y(x)\le \dist(x,Y)^{-\delta_F} \cdot C_1 \vol(Y)\le C_1T^{\delta_F} R.$$
    
Thus, using that $h(x)\le T^{\delta_F}$, we conclude
$$F_Y(x)=f_{Y}(x)+\lambda_1 \vol(Y) h(x) \le C_1T^{\delta_F} R +\lambda_1 T^{\delta_F} R = C_8 T^{\delta_F} R$$

where $C_8 := C_1 + \lambda_1$ is a global constant independent of $Y$.
    
Now take $A_2 = \delta_F t_F / \log 2$ (here, $t_F$ is as in Theorem \ref{Margulis_inequality_F}). If $t \ge A \log T$, then 
$$t \ge A \log T \ge A_2\log T_x \ge A_2\log h(x)^{1/\delta_F} \ge A_2/\delta_F \cdot \log 2 = t_F$$ 
and so by Corollary \ref{Cor:exponential_decay_F} we have
\begin{align*}
A_{t}(F_Y(x))&\le \frac{C_F}{2^{ t/t_F}}F_Y(x)+E_2 \vol(Y)\\
&\le \frac{C_F}{2^{ A \log T/t_F}} \cdot C_8 T^{\delta_F} R+E_2 R \\
& \le \left(\frac{1}{2^{A_2/t_F}}\right)^{\log T} \cdot C_FC_8 T^{\delta_F} R +E_2R\\ 
& = (C_FC_8+E_2) R.
\end{align*} 

If $\dist(x',Y) < R^{-D}$ for some $x'\in X$, then either there exists $v \in I_Y(x')$ with $||v|| < R^{-D}$ or $\varepsilon_h h(x')^{-\kappa} < R^{-D}$. In either case, we have that 
$$F_Y(x')=f_Y(x')+\lambda_1 \vol(Y) h(x') > \min \{R^{D\delta_F}, \lambda_1 \tau \varepsilon_h^{1/\kappa}R^{D/\kappa}\} \ge C_9 \cdot R^{D\delta_F}$$

where $C_9 := \lambda_1 \tau \varepsilon_h^{1/\kappa}$ is a global constant independent of $Y$. (Here, we are again using that $\delta_F := \min\{\delta_0/2, 1/\kappa\}$ and thus $\delta_F \le 1/\kappa$.)

Applying this observation to points $a_{t}ux$ and using Chebyshev's theorem we obtain,
\begin{align*}
&m_U(\{u\in B_1 ^U: \dist(a_{t}ux, Y) < R^{-D}\}) \\
& \le m_U(\{u\in B_1 ^U:F_Y(a_{t}ux) < C_9 \cdot R^{D\delta_F}\}) \\
& < C_9^{-1} R^{-D\delta_F}\cdot A_{t}(F_Y(x)) \\
&\le C_{10} R^{-(D\delta_F-1)}
\end{align*}
where $C_{10} := (C_FC_8+D_2)/C_9$ is another global constant independent of $Y$. Recall that $\delta_F$ is a global constant that depends only on $G$ and $H$ (see Remark \ref{dimG}). Take $D_2 = 2/\delta_F$.

\textbf{Step 2.2: Avoiding all closed orbits of small volume.} 

By Lemma \ref{finitely_many_intermediate_subgroups}, the number of intermediate subgroups $H\subseteq S\subsetneq G$ is finite; we shall denote this number as $N(G, H)$. By Theorem \ref{finiteness_closed_orbits_bounded_volume}, for each fixed $S$, the number of closed $S$-orbits $Y$ with $\vol(Y)<R$ is bounded by $C_6 R^{d_G / \delta_F}$. Therefore, the cardinality of the set 
$$ O_R = \{Y=Sy : H\subseteq S\subsetneq G, Y \text{ is closed}, \vol(Y)<R \}$$
is bounded above by $N(G, H) \cdot C_6 R^{d_G / \delta_F}$ and so
\begin{align*}
&m_U(\{u\in B_1 ^U: \dist(a_{t}ux, Y) < R^{-D} \text{ for some } Y \in O_R \})\\
& \le \sum_{Y \in O_R} m_U(\{u\in B_1^U: \text{\rm{dist}}(a_{t}ux, Y) < R^{-D}\})\\
& \le \sum_{Y \in O_R} C_9 R^{-(D\delta_F-1)} \\
& < N(G, H) \cdot C_6 R^{d_G/\delta_F} \cdot C_{10} R^{-(D\delta_F-1)} \\
& = N(G, H)C_6C_{10} \cdot R^{-D\delta_F+d_G/\delta_F+1}.
\end{align*}

Take $D_3 =D_3(\dim(G)):= (d_G/\delta_F+2)/\delta_F$ (so that $-D\delta_F+d_G/\delta_F+1 \ge -1$ for any $D \ge D_3$) and we get the desired avoidance principle: for all $D \ge D_3$ and $A \ge A_2$, for all $t \ge A\log T$, 
$$m_U(\{u\in B_1 ^U: \dist(a_{t}ux, Y) < R^{-D} \text{ for some } Y \in O_R) < C_{11} R^{-1}$$

(here, $C_{11} := N(G, H)C_6C_{10}$ is another global constant).

Now combine the results of Step 1 and Step 2. Take $D = D(\dim(G)) := \max\{D_1, D_3\}$, $A = A(G/\Gamma, H) := \max\{A_1, A_2\}$, and $C = C(G/\Gamma, H) := C_7 + C_{11}$, and we get 
\begin{align*}
& m_U(\{u\in B_1 ^U:a_{t}ux \text{ is not $(R, R^{-D}, R^{-D})$-Diophantine with respect to }H\}) \\
& \le m_U(\{u\in B_1 ^U: \inj(a_{t}ux) < R^{-D} \} \\
& \, \, \, \, \, \, \, + m_U(\{u\in B_1 ^U: \dist(a_{t}ux, Y) < R^{-D} \text{ for some } Y \in O_R) \\
& < C_7 R^{-1} + C_{11} R^{-1} = C R^{-1}.
\end{align*}

\end{proof}


\nocite{*}

 \end{document}